\documentclass[11pt,reqno]{amsart}
\usepackage{amssymb,mathrsfs,color,mathtools,empheq, verbatim, epstopdf}
\usepackage{pinlabel}
\mathtoolsset{showonlyrefs}
\usepackage{hyperref} 

\usepackage{enumerate, bbm}
\usepackage{tensor}

\usepackage{graphicx}
\usepackage{tensor}
\usepackage{slashed}
\usepackage{cite}
\usepackage[dvipsnames]{xcolor}

\usepackage[shortlabels]{enumitem}

\usepackage{geometry}\geometry{margin=1.1in}

\numberwithin{equation}{section}

\newtheorem{mainthm}{Theorem}
\newtheorem{thm}{Theorem}[section]
\newtheorem{cor}[thm]{Corollary}
\newtheorem{lem}[thm]{Lemma}

\theoremstyle{definition} 
\newtheorem{rem}[thm]{Remark}
\newtheorem{defn}[thm]{Definition}

\theoremstyle{remark}



\def\bR {\mathbb{R}}
\def\bS {\mathbb{S}}

\def\frakh{\mathfrak{h}}




\newcommand{\bs}[1]{\boldsymbol{#1}}

\newcommand{\dist}{\operatorname{dist}}









\definecolor{deepgreen}{cmyk}{1,0,1,0.5}



\newcommand{\E}{\mathcal{E}}



\newcommand{\N}{\mathbb{N}}
\newcommand{\R}{\mathbb{R}}
\newcommand{\Sp}{\mathbb{S}}



\newcommand{\al}{\alpha}
\newcommand{\be}{\beta}

\newcommand{\de}{\delta}

\newcommand{\om}{\omega}
\newcommand{\lam}{\lambda}
\newcommand{\te}{\theta}

\newcommand{\De}{\Delta}
\newcommand{\Om}{\Omega}


\newcommand{\p}{\partial}
\newcommand{\na}{\nabla}

\newcommand{\loc}{\operatorname{loc}}

\makeatletter

\newcommand{\Rmnum}[1]{\expandafter\@slowromancap\romannumeral #1@}
\makeatother

\newcommand{\ti}{\widetilde}


\newcommand{\abs}[1]{\left\lvert{#1}\right\rvert}


\newcommand{\EQ}[1]{\begin{equation}\begin{split} #1 \end{split}\end{equation}}

\setlength{\marginparwidth}{2cm}

\newcommand{\Del}[1]{}



\newcommand{\mand}{{\ \ \text{and} \ \  }}

\newcommand{\mas}{{\ \ \text{as} \ \ }}




\definecolor{green}{rgb}{0.3,0.8,0} 





\newcommand{\ud}{\mathrm{d}}


\newcommand{\eps}{\epsilon}


\newcommand{\bfd}{{\bf d}}


\newcommand{\bbS}{\mathbb S}


\newcommand{\calB}{\mathcal B}
\newcommand{\calC}{\mathcal C}
\newcommand{\calD}{\mathcal D}
\newcommand{\calE}{\mathcal E}

\newcommand{\calI}{\mathcal I}
\newcommand{\calJ}{\mathcal J}
\newcommand{\calK}{\mathcal K}
\newcommand{\calL}{\mathcal L}
\newcommand{\calM}{\mathcal M}
\newcommand{\calN}{\mathcal N}

\newcommand{\calP}{\mathcal P}
\newcommand{\calQ}{\mathcal Q}
\newcommand{\calR}{\mathcal R}

\newcommand{\calT}{\mathcal T}

\newcommand{\calV}{\mathcal V}



\vfuzz2pt 
\hfuzz2pt 









%
%
%
%
%
%
%

\newcommand{\les}{\lesssim}

\def\loc{\mathrm{loc}}

\begin{document}

\title[Bubbling for the harmonic map heat flow]{Continuous in time bubble decomposition\\ for the harmonic map heat flow}
\author{Jacek Jendrej}
\author{Andrew Lawrie}
\author{Wilhelm Schlag}

\begin{abstract}
We consider the harmonic map heat flow for maps $\bR^{2} \to \bS^2$.  It is known that solutions to the initial value problem exhibit bubbling along a well-chosen sequence of times. 
We prove that every sequence of times admits a subsequence along which bubbling occurs. 
This is deduced as a corollary of our main theorem, which shows that the solution approaches the family of multi-bubble configurations in continuous time. 
\end{abstract}

\keywords{bubbling; harmonic map}

\thanks{J. Jendrej is supported by  ANR-18-CE40-0028 project ESSED.  A. Lawrie is supported by NSF grant DMS-1954455 and the Solomon Buchsbaum Research Fund. W. Schlag is supported by NSF grant DMS-1902691.
}

\maketitle

\tableofcontents

\section{Introduction}

\subsection{Setting of the problem}

Consider the harmonic map heat flow (HMHF) for maps $u: \R^2 \to \Sp^2 \subset \R^3$, that is, the gradient flow of the Dirichlet energy 
\EQ{ \label{eq:energy} 
E(u) := \frac{1}{2} \int_{\R^2} \abs{\na u(x)}^2 \, \ud x, 
}
for the $L^2$ inner product. 
The initial value problem for the HMHF is given by
\EQ{ \label{eq:hmhf} 
\p_t u &= \De u + u \abs{\na u}^2  \\
u(0, x) &= u_0(x).
}

We consider initial data in the  energy class,  
\EQ{
\E:= H^1( \R^2; \Sp^2) := \{ u_0 \in \dot H^1(\R^2; \R^3) \mid  |u_0(x)|^2 =1  \, \, \textrm{for almost every} \, x \in \R^2\}.
}
The HMHF was proved to be well-posed in $\E$ by Struwe~\cite{Struwe85}, and we can associate to each initial data $u_0 \in \E$  a maximal time of existence $T_+= T_+(u_0) \in (0, \infty]$, and unique solution $u(t) \in \E$, which is regular for $t \in (0, T_+)$. The maximal time $T_+$ is characterized as the first time at which energy concentrates at a point in space; see Lemma~\ref{lem:lwp}. 
Of fundamental importance is the energy identity, 
\EQ{ \label{eq:en-id-i} 
E(u(t_2)) +  \int_{t_1}^{t_2} \| \calT(u(t)) \|_{L^2(\R^2)}^2\, \ud t = E(u(t_1)), 
}
which holds for any $0 \le t_1 < t_2 < T_+$ (see~\cite[Lemma 3.4]{Struwe85}),  and where  $\calT(u):= \De u + u \abs{\na u}^2 $, which is called the \emph{tension} of $u$.   

The HMHF for maps between Riemannian manifolds was introduced by Eells and Sampson~\cite{ES}.  Though we do not do this here, when studying the HMHF for maps $\R^2  \to \Sp^2$, it is natural to further restrict the class of initial data by intersecting the space $\calE$ with the set of continuous maps $u_ 0$ that tend to a fixed vector on $\Sp^2$ at $\infty$, i.e., such that there exists $u_\infty \in \Sp^2$ so that $ \lim_{x \to \infty} |u_0(x) - u_\infty|   = 0$. By assigning to the point at $\infty$ the vector $u_\infty$,  $u_0$ induces a continuous map $\ti u_0: \Sp^2 \to \Sp^2$ and we can define the topological degree of $u_0$ to be the degree of $\ti u_0$. One can show that this condition is preserved by the flow, that is the solution $u(t, x)$ satisfies $ \lim_{x \to \infty} |u(t, x) - u_\infty|   = 0$ for all $0\le  t< T_+$.  Under this restriction,  the solution $u(t, x)$ gives a continuous deformation of the initial data $u_0(x)$ within its homotopy class, which was one of the motivations mentioned in~\cite{ES}. 

 Harmonic maps $\omega:\R^2\to\bbS^2 \subset \R^3$ have vanishing tension and give stationary solutions to~\eqref{eq:hmhf}. They are formal critical points of the energy~\eqref{eq:energy} 
and satisfy the PDE, 
 \EQ{\label{eq:EL}
 \Delta \om + \om |\nabla \om|^2 =0. 
}
It is a well-known general property of harmonic maps in two dimensions that they are conformal (up to change of orientation) and minimize the energy in their homotopy class;~\cite{EW, Eel-Lem, Lem}. The energy of a harmonic map $\om$ is given by $E(\om) = 4 \pi | \deg(\om)|$. Weak solutions, that is $\om \in \E$ for which \eqref{eq:EL} holds in the weak sense, are smooth by a result of H\'elein~\cite{Hel}; see Theorem~\ref{thm:hmap} in Section~\ref{sec:hmap}. 


\subsection{Statement of the results}
The goal of this paper is to give asymptotic descriptions of solutions $u(t)$ to~\eqref{eq:hmhf} with initial data $u_0 \in \E$. Our first main result is that every sequence of times tending to the maximal time admits a subsequence $t_n \to T_+$ along which  $u(t_n)$ admits a decomposition into a finite superposition of rescaled and translated harmonic maps. 

We use the notation $D(y, \rho) \subset \R^2$ to denote the open disc of radius $\rho>0$  centered at the point $y \in \R^2$.

 

\begin{mainthm}[Bubble decomposition along any time sequence] \label{thm:main1} 
Let $u(t)$ be the unique solution to~\eqref{eq:hmhf} associated to initial data $u_0 \in \E$.  Let $T_+ = T_+(u_0) \in (0, \infty]$ denote the maximal time of existence.

\emph{(Finite time blow-up)} Suppose $T_+ < \infty$. There exist a finite energy map $u^*: \R^2 \to \Sp^2$, an integer $L \ge 1$, and points $\{x^\ell\}_{\ell =1}^L \subset \R^2$ with the following properties. 

Let $t_n \to T_+$ be any time sequence. After passing to a subsequence, which we still denote by $t_n$, we can associate to each $\ell \in \{1, \dots, L\}$ an integer $M^{(\ell)}$, sequences $a_{ j, n}^{(\ell)} \in \R^2$   and $\lam_{ j, n}^{(\ell)} \in (0, \infty)$ for each $j \in \{1, \dots, M^{(\ell)}\}$,  with $a_{ j, n}^{(\ell)} \to x^{\ell}$,  $\frac{\lam_{ j, n}^{(\ell)}}{\sqrt{T_+- t_n}}   \to 0$ as $n \to \infty$,   and nontrivial harmonic maps $\om_{1}^{(\ell)},  \dots, \om_{M^{(\ell)}}^{(\ell)}$ so that 
\EQ{ \label{eq:ao-param} 
\lim_{n \to \infty} \bigg( \frac{\lam_{ j, n}^{(\ell)}}{\lam_{ k, n}^{(\ell)}} + \frac{\lam_{k, n}^{(\ell)}}{\lam_{ j, n}^{(\ell)}} + \frac{ | a_{j, n}^{(\ell)} - a_{k, n}^{(\ell)}|}{ \lam_{ j, n}^{(\ell)}}\bigg)  = \infty \quad \textrm{for all}\,\,   j \neq k , 
}
and 
\EQ{
\lim_{n \to \infty} E \bigg( u(t_n) - u^* - \sum_{\ell = 1}^{L}   \sum_{j =1}^{M^{(\ell)}} \Big( \om_{j}^{(\ell)} \Big( \frac{ \cdot - a_{ j, n}^{(\ell)}}{\lam_{ j, n}^{(\ell)}} \Big) - \om_{j}^{(\ell)}(\infty) \Big) \bigg) = 0, 
}
where $\om_{j}^{(\ell)}(\infty) := \lim_{\abs{x} \to \infty} \om_{j}^{(\ell)} (x) \in \Sp^2$.  

Moreover, there exists a sequence $r_n \to \infty$ with the following property. Fix any $\ell \in \{1, \dots, L\}$. For each $j \in \{1, \dots, M^{(\ell)}\}$, there exists $0 \le K_j^{(\ell)}< M^{(\ell)}$ many discs $D(x_{j, k, n}, \mu_{j,k, n}) \subset D(a_{j, n}^{(\ell)}, r_n \lambda_{j, n}^{(\ell)})$ such that for each $k \in \{1, \dots, K_j^{(\ell)}\}$, 
\EQ{
\lim_{n \to \infty} \Big( \frac{\mu_{j,k, n}}{\lambda_{j, n}^{(\ell)}}  + \frac{\mu_{j, k, n}}{ \dist(x_{j, k, n}, \partial D( a_{j, n}^{(\ell)}, r_n \lam_{j, n}^{(\ell)}))}\Big) = 0, 
}
 and so that 
\EQ{\label{eq:sup-blowup} 
\lim_{n \to \infty} \Big\| u(t_n) - \om_{j}^{(\ell)} \Big( \frac{ \cdot - a_{ j, n}^{(\ell)}}{\lam_{ j, n}^{(\ell)}} \Big) \Big\|_{L^\infty( D^*_{j, n})}  = 0, 
} 
where $D^*_{j, n} = D(a_{j, n}^{(\ell)}, r_n \lambda_{j, n}^{(\ell)}) \setminus \bigcup_{k =1}^{K_j^{(\ell)}} D(x_{j, k, n}, \mu_{j,k, n})$. 

Finally, there exist  constants $\om_\infty^{(1)}, \dots, \omega_{\infty}^{(L)} \in \Sp^2$ and sequences $\xi_n, \nu_n \to 0$ so that for each $\ell \in \{1, \dots, L\}$, 
\EQ{\label{eq:ss-neck} 
\lim_{n \to \infty}\Big(  \| u( t_n) - \om^{(\ell)}_{\infty}  \|_{L^{\infty}( D( x^{\ell},  \nu_n) \setminus D( x^{\ell},  \xi_n))} + \frac{\xi_n}{\sqrt{T_+-t_n}} + \frac{\sqrt{T_+-t_n}}{\nu_n} \Big) = 0 .
}


\emph{(Global solution)} Suppose $T_+= \infty$. Let $t_n \to \infty$ be any time sequence. After passing to a subsequence, which we still denote by $t_n$, we can find an integer $M \ge 0$, sequences $a_{j, n} \in \R^2$ and $\lam_{ j, n} \in (0, \infty)$  for each $j \in \{1, \dots, M\}$, with $ \lim_{n \to \infty} \frac{|a_{j, n}| + \lam_{ j, n}}{\sqrt{t_n}} = 0$,  and nontrivial harmonic maps $\om_1, \dots, \om_{M}$, so that 
\EQ{
\lim_{n \to \infty} \bigg( \frac{\lam_{j, n}}{\lam_{k, n}} + \frac{\lam_{k, n}}{\lam_{j, n}} + \frac{ | a_{ j, n} - a_{k, n}|}{ \lam_{j, n}} \bigg) = \infty \quad \textrm{for all} \, \,  j \neq k, 
}
and
\EQ{
\lim_{n \to \infty} E \bigg( u(t_n) - \om_\infty - \sum_{j =1}^{M} \Big(\om_{j} \Big( \frac{ \cdot - a_{ j, n}}{\lam_{ j, n}} \Big) - \om_{j}(\infty)\Big) \bigg) = 0, 
}
where $\om_{j}(\infty) := \lim_{\abs{x} \to \infty} \om_{j} (x) \in \Sp^2$. 

Moreover, there exists a sequence $r_n \to \infty$ with the following property.  For each $j \in \{1, \dots, M\}$, there exists $0 \le K_j< M$ many discs $D(x_{j, k, n}, \mu_{j,k, n}) \subset D(a_{j, n}, r_n \lambda_{j, n})$ such that for each $k \in \{1, \dots, K_j\}$, 
\EQ{
\lim_{n \to \infty} \Big( \frac{\mu_{j,k, n}}{\lambda_{j, n}}  + \frac{\mu_{j, k, n}}{ \dist(x_{j, k, n}, \partial D( a_{j, n}, r_n\lam_{j, n}))}\Big) = 0, 
}
 and so that 
\EQ{ \label{eq:sup-global} 
\lim_{n \to \infty} \Big\| u(t_n) - \om_{j}^{(\ell)} \Big( \frac{ \cdot - a_{ j, n}^{(\ell)}}{\lam_{ j, n}^{(\ell)}} \Big) \Big\|_{L^\infty( D^*_{j, n})}  = 0,
} 
where $D^*_{j, n} = D(a_{j, n}, r_n \lambda_{j, n}) \setminus \bigcup_{k =1}^{K_j} D(x_{j, k, n}, \mu_{j,k, n})$. 

Finally, there exists a constant $\om_\infty \in \Sp^2$ and sequences $\xi_n, \nu_n \in (0, \infty)$ so that  
\EQ{\label{eq:ss-neck2} 
\lim_{n \to \infty}\Big(  \| u( t_n) - \om_{\infty}  \|_{L^{\infty}( D( x^{\ell},  \nu_n) \setminus D( x^{\ell},  \xi_n))} + \frac{\xi_n}{\sqrt{t_n}} + \frac{\sqrt{t_n}}{\nu_n} \Big) = 0 .
}


\end{mainthm}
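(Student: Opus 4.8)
\medskip
\noindent\textbf{Strategy of proof.}
The plan is to derive Theorem~\ref{thm:main1} from the continuous-in-time bubble decomposition, which is the main result of this paper and is established in the sections that follow. What that result will provide is a time $T_0<T_+$ together with, for every $t\in[T_0,T_+)$, a decomposition $u(t)=u^*+\sum_{i=1}^{N}\big(\om_i((\cdot-a_i(t))/\lam_i(t))-\om_i(\infty)\big)+g(t)$ in which $N$ and the nontrivial harmonic maps $\om_1,\dots,\om_N$ are \emph{independent} of $t$, the parameters $t\mapsto(a_i(t),\lam_i(t))$ are continuous, $\|g(t)\|_{\dot H^1}\to0$, $\lam_i(t)/\sqrt{T_+-t}\to0$, each $a_i(t)$ converges as $t\to T_+$, and the bubbles are uniformly pseudo-orthogonal. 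Before invoking this I would fix the objects that do not depend on the sequence: by \eqref{eq:en-id-i} the energy $t\mapsto E(u(t))$ is nonincreasing and bounded by $E(u_0)$, so the set of $x\in\R^2$ with $\limsup_{t\to T_+}\int_{D(x,\rho)}|\na u(t)|^2\,\ud x\ge\eps_0$ for every $\rho>0$, with $\eps_0$ the $\eps$-regularity threshold for \eqref{eq:hmhf}, is finite; call it $\{x^\ell\}_{\ell=1}^L$. Set $u^*:=\lim_{t\to T_+}u(t)$, which exists and is smooth on $\R^2\setminus\{x^\ell\}$ by $\eps$-regularity; as is standard in the bubbling analysis it extends to a continuous map on all of $\R^2$, and I would put $\om_\infty^{(\ell)}:=u^*(x^\ell)$.

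\medskip
Given an arbitrary $t_n\to T_+$, the first step is to apply the decomposition at $t=t_n$ and pass to a subsequence along which, for every pair $i\neq i'$, the ratio $\lam_{i,n}/\lam_{i',n}$ converges in $[0,\infty]$, the vector $(a_{i,n}-a_{i',n})/\lam_{i,n}$ converges in $\R^2\cup\{\infty\}$, and each $a_{i,n}$ converges --- necessarily to some $x^{\ell(i)}$, since a bubble carries at least $4\pi>\eps_0$ of energy concentrated near its center. Grouping the indices by the value of $x^{\ell(i)}$ produces the integers $M^{(\ell)}$ with $\sum_\ell M^{(\ell)}=N$ (and $M^{(\ell)}\ge1$ for each $\ell$, because by the no-neck-energy property the energy concentrating at $x^\ell$ can only be bubble energy), and relabelling the bubbles within each group gives the data $\om_j^{(\ell)}$, $a_{j,n}^{(\ell)}$, $\lam_{j,n}^{(\ell)}$. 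Then \eqref{eq:ao-param} is inherited from the uniform pseudo-orthogonality of the continuous-in-time configuration; $a_{j,n}^{(\ell)}\to x^\ell$ and $\lam_{j,n}^{(\ell)}/\sqrt{T_+-t_n}\to0$ hold by construction; the asserted energy limit is exactly $\|g(t_n)\|_{\dot H^1}\to0$ once one records that $E(v)=\tfrac12\|\na v\|_{L^2}^2$ on $\dot H^1(\R^2;\R^3)$; and each $\om_j^{(\ell)}$, being the weak $\dot H^1$ limit of $y\mapsto u(t_n)(a_{j,n}^{(\ell)}+\lam_{j,n}^{(\ell)}y)$, is weakly harmonic of energy $\ge4\pi$ and hence a nontrivial smooth harmonic map by Theorem~\ref{thm:hmap}.

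\medskip
For the uniform bounds \eqref{eq:sup-blowup}, I would, for fixed $\ell$ and $j$, call $k$ a \emph{child} of $j$ when $\lam_{k,n}^{(\ell)}/\lam_{j,n}^{(\ell)}\to0$ and $|a_{k,n}^{(\ell)}-a_{j,n}^{(\ell)}|=O(\lam_{j,n}^{(\ell)})$ --- there are $K_j^{(\ell)}<M^{(\ell)}$ of them --- and take the excised discs to be $D(a_{k,n}^{(\ell)},s_n\lam_{k,n}^{(\ell)})$ with $s_n\to\infty$ slowly, which satisfies the two required ratio limits. After rescaling by $\lam_{j,n}^{(\ell)}$ about $a_{j,n}^{(\ell)}$, the domain $D^*_{j,n}$ becomes $\Om_n\subset D(0,r_n)$ with $r_n\to\infty$, and two facts are needed: first, the Dirichlet energy of the rescaled solution minus that of $\om_j^{(\ell)}$ tends to $0$ --- from the energy balance of the previous paragraph together with the no-neck-energy property, since after rescaling $u^*$ (concentrated at unit scale on a shrinking preimage of $x^\ell$) and the children (excised) carry vanishing energy on $\Om_n$; second, the energy stays below $\eps_0$ on $\Om_n$ throughout a parabolic neighborhood of the rescaled time, which is where one uses the continuous-in-time decomposition at \emph{all} nearby times, not only at $t_n$, and which makes the $\eps$-regularity estimates for \eqref{eq:hmhf} available. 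These two facts, a matching of the rescaled solution to the constant $\om_j^{(\ell)}(\infty)$ on the outermost dyadic annulus, and the classical no-neck oscillation argument (as in the work of Qing--Tian and Lin--Wang) --- now valid because $\int\|\calT(u(t))\|_{L^2}^2\,\ud t$ over a parabolic-length interval ending at $t_n$ tends to $0$ by \eqref{eq:en-id-i}, supplying the approximate harmonicity --- will give \eqref{eq:sup-blowup} after a harmless shrinking of $r_n$. The parabolic-scale neck estimate \eqref{eq:ss-neck} is obtained the same way: since all $\lam_{j,n}^{(\ell)}/\sqrt{T_+-t_n}\to0$ with $a_{j,n}^{(\ell)}\to x^\ell$, while $u(t_n)\to u^*$ uniformly on compact subsets of $\R^2\setminus\{x^\ell\}$ and $u^*$ is continuous at $x^\ell$, one can pick $\max_j\lam_{j,n}^{(\ell)}\ll\xi_n\ll\sqrt{T_+-t_n}\ll\nu_n$, both tending to $0$, so that $D(x^\ell,\nu_n)\setminus D(x^\ell,\xi_n)$ avoids every bubble core; the no-neck-plus-$\eps$-regularity argument then forces the energy of $u(t_n)$ on this annulus to $0$ and $u(t_n)$ to within $o(1)$ of its outer boundary value, which tends to $u^*(x^\ell)=\om_\infty^{(\ell)}$. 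The global case $T_+=\infty$ runs identically with $\sqrt{T_+-t_n}$ replaced by $\sqrt{t_n}$ (the parabolic length measured from the initial time) and the finite-time energy bounds replaced by $E(u(t))\le E(u_0)$ and $\int_0^\infty\|\calT(u(t))\|_{L^2}^2\,\ud t<\infty$ from \eqref{eq:en-id-i}; the case $M=0$, i.e. $u(t_n)\to\om_\infty$ in $\dot H^1$ with no bubbles, is permitted.

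\medskip
The principal obstacle is the continuous-in-time decomposition itself, whose proof must establish that the bubble configuration is \emph{stable} as $t\to T_+$ --- the same number $N$ of bubbles, with the same profiles, along \emph{every} approach to $T_+$ --- rather than merely producing one favourable sequence as in Struwe's argument~\cite{Struwe85}; this is where monotonicity-type (localized virial / energy) estimates ruling out bubbles re-inflating after collapse will be required. Granting that, the delicate point within the reduction above is upgrading the $\dot H^1$ (energy) convergence to the \emph{uniform} convergence in \eqref{eq:sup-blowup} and \eqref{eq:ss-neck}: for a general sequence the tension $\calT(u(t_n))$ need not be small, so the elliptic no-neck estimates do not apply verbatim at time $t_n$, and the plan is to combine parabolic smoothing with the time-integrated dissipation $\int\|\calT(u(t))\|_{L^2}^2\,\ud t\to0$ over parabolic-length intervals to recover the approximate harmonicity that drives the oscillation estimate on the neck regions.
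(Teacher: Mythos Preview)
Your reduction rests on a version of the continuous-in-time result that the paper does \emph{not} prove. You assume that Theorem~\ref{thm:main} gives, for all $t$ near $T_+$, a decomposition $u(t)=u^*+\sum_{i=1}^{N}(\om_i((\cdot-a_i(t))/\lam_i(t))-\om_i(\infty))+g(t)$ with the integer $N$ and the harmonic maps $\om_1,\dots,\om_N$ \emph{independent of $t$}. That is precisely the open question flagged in Remark~\ref{rem:continuous}; Theorem~\ref{thm:main} only asserts $\bs\delta(u(t);D(y,\sqrt{T_+-t}))\to 0$, i.e.\ for each $t$ there is \emph{some} multi-bubble configuration $\calQ(\bs\om(t))$ close to $u(t)$, with harmonic maps that may depend on $t$. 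Along a sequence $t_n$ one therefore obtains $n$-dependent harmonic maps $\om_{j,n}$, and the entire content of the paper's proof of Theorem~\ref{thm:main1} is to produce \emph{fixed} harmonic maps from these.

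Concretely, the paper applies the Compactness Lemma (Theorem~\ref{lem:compact}) to each sequence $\{\om_{j,n}\}_n$ of harmonic maps (which have zero tension), extracting fixed profiles $\theta_{j,k}$ together with centers $b_{j,k,n}$ and scales $\mu_{j,k,n}$. The difficulty your proposal does not see is that the resulting triples for different $j$ need not be asymptotically orthogonal: two distinct $n$-dependent bubbles $\om_{j,n}$ and $\om_{j',n}$ can, after this second compactness step, spit out profiles at comparable scales and nearby centers. The paper resolves this by invoking the \emph{refined} part of Theorem~\ref{thm:main} (the statement about smaller discs $D(y_n,\rho_n)$ with vanishing neck energy), which guarantees $\bs\delta(u(t_n);D(b_n,r_n\mu_n))\to 0$ at the clash scale; this forces the overlapping profiles either to cancel to a constant or to be replaced by a single harmonic map $\Theta_{j_1,k_1}$. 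None of this machinery --- compactness applied to the harmonic maps themselves, the tree/root structure, and the use of Theorem~\ref{thm:main} at intermediate scales to repair orthogonality --- appears in your plan, because you have assumed it away by taking the $\om_i$ fixed from the start.
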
  

\begin{rem}  \label{rem:seq} 
An influential series of works by Struwe~\cite{Struwe85}, Qing~\cite{Qing},  Ding-Tian~\cite{DT}, Wang~\cite{Wang}, Qing-Tian~\cite{QT}, Lin-Wang~\cite{Lin-Wang98}, and Topping~\cite{Topping-winding} showed that solutions $u(t)$ to~\eqref{eq:hmhf} admit a bubble decomposition in the sense of Theorem~\ref{thm:main1} along a well-chosen sequence of times $t_n \to T_+$; see also the book by Lin-Wang~\cite{Lin-Wang}. In these works, the bubbling time sequence $t_n \to T_+$ and corresponding sequence of maps $u(t_n)$ become a Palais-Smale sequence after rescaling. Indeed, in the case when $T_+ = \infty$ it follows from~\eqref{eq:en-id-i} that $\int_0^\infty  \| \calT(u(t)) \|_{L^2}^2\, \ud t < \infty$, so there exists a sequence $t_n \to \infty$ so that $\lim_{n \to \infty} \sqrt{t_n} \| \calT(u(t_n) \|_{L^2}  = 0$. By similar logic, in the case of finite time blow-up ($T_+ < \infty$) there is a sequence $t_n \to T_+$ so that $\lim_{n \to \infty} \sqrt{T_+ - t_n} \| \calT(u(t_n)) \|_{L^2} = 0$. In other words, after the rescaling $u_n(x) := u(t_n, \sqrt{t_n}x)$ (or $u_n(x) = u(t_n, \sqrt{T_+-t_n} x)$),  the $u_n$ are  Palais-Smale sequences for the energy functional because $\sup_nE(u_n) < \infty$ and $DE(u_n) = -\calT(u_n) \to 0$ in $L^2$. Elliptic bubbling analysis (see e.g,.~\cite{Struwe85, BrezisCoron, Qing}) is then used to extract bubbles up to the scale $\sqrt{t_n}$ (or $\sqrt{T_+- t_n}$ in the case $T_+ <\infty$). Theorem~\ref{thm:main1} is distinct from this classical literature in that we show bubbling occurs along every time sequence (after passing to a suitable subsequence), without the aid of a Palais-Smale sequence in the sense described above. On the other hand, the works~\cite{QT, Lin-Wang} show $L^\infty$ convergence including in the neck regions between the bubbles whereas here we control only the energy in the neck regions -- we do not address the question of $L^\infty$ convergence on the neck regions. 
 \end{rem} 


\begin{rem} 
One can also study the two-dimensional HMHF for more general domains and targets, that is for maps $u: \calM \to \calN$, where $\calM$ is a $2$-dimensional closed, orientable Riemannian manifold (or $\R^2$) and $\calN$ is a closed $n$-dimensional sub-manifold of $\R^N$ for some $N$, as in this case the bubbling theory of~\cite{Struwe85, Qing, DT, QT, Lin-Wang98} is understood. But we do not pursue this here. 
Moreover, the choice of $\R^2$ as the domain is for convenience as we could have instead considered maps $u: \Sp^2 \to \Sp^2$. 




\end{rem}

We deduce Theorem~\ref{thm:main1} as a consequence of a more refined result, where we show that every smooth solution $ u(t)$ converges, continuously in time, to the family of multi-bubble configurations, locally about any point in space. To state this result we first define a notion of \emph{scale} and~\emph{center} of a non-trivial harmonic map.  

 \label{sec:scalecenter}


\begin{defn}[Scale of a harmonic map] 
\label{def:scale}
To each non-constant harmonic map $\om: \R^2  \to \Sp^2 \subset \R^3$ and each $\gamma_0 \in (0, 2\pi)$  we associate a scale $\lam( \om; \gamma_0)$ defined by 
\EQ{
\lam( \om; \gamma_0):= \inf\{ \lam \in (0, \infty) \mid \textrm{there exists}\, \, a \in \R^2\, \, \textrm{such that} \, \, E( \om; D(a, \lam)) \ge E( \om) - \gamma_0\}.
}
\end{defn}


\begin{defn}[Center of a harmonic map] 
\label{def:center} 
Given the scale of a harmonic map $\om$ as above, we  define the associated center of $\omega$ by fixing a choice of $a = a(\om; \gamma_0) \in \R^2$ so that 
\EQ{\label{eq:cent def}
E( \om; D( a(\om; \gamma_0), \lam(\om; \gamma_0))) \ge E( \om) -  \gamma_0. 
}
\end{defn} 

We prove in Lemma~\ref{lem:scale} that these notions are well-defined and transform naturally under the rescaling and translation of a harmonic map. Indeed, the scale $\lam( \om; \gamma_0)$ is a uniquely defined, strictly positive number. Regarding a choice of center, equality occurs in~\eqref{eq:cent def}. However,  $a(\om; \gamma_0)$ is defined only up to a distance of $2\lam( \om; \gamma_0)$.  
Given a harmonic map $\om(x)$, translating by $b \in \R^2$ and  rescaling by $\mu \in (0, \infty)$ we obtain $ \om_{b, \mu}(x) := \om\big( \frac{x - b}{\mu})$.  Then $\lam(\om_{b, \mu}) = \lam(\omega)\mu$ and $|a(\om_{b, \mu})- a(\omega)\mu -b| \le 2\lam(\om)\mu $. 

%


\begin{defn}[Multi-bubble configuration] 
Let $M \in \{0,1, 2, \dots\}$. We define an $M$-bubble configuration to be a superposition 
\EQ{
\calQ( \om,  \om_1, \dots, \om_M; x) = \om+  \sum_{j=1}^M ( \om_j(x)  - \om_j(\infty)), 
}
where $\om \in \Sp^2$ is a constant, and each $\om_j: \R^2 \to \Sp^2$ is a smooth non-constant harmonic map, and $\om_j(\infty):= \lim_{\abs{x} \to \infty} \om_j(x)$. We include constant maps as $M=0$. 
\end{defn} 

We will occasionally use boldface notation $\bs \om: = (\om,  \om_1, \dots, \om_M)$, for finite sequences of harmonic maps with $\om \in \Sp^2$ a constant harmonic map and $\om_{1}, \dots, \om_{M}$ non-constant, and we reserve the arrow notation for vectors (finite sequences) in other contexts.  With this notation we will often express multi-bubbles as $ \calQ( \bs \omega):=\calQ( \om,  \om_1, \dots, \om_M)$. We reserve the character $\frakh$ to denote an infinite sequence of non-constant harmonic maps, i.e., $\frakh:= \{ \om_{n}\}_{n =1}^\infty$, where each $\om_n$ is a harmonic map. 

\begin{defn}[Localized distance to a multi-bubble configuration]  \label{def:d}
 Let $\xi, \rho, \nu \in (0, \infty)$, with $\xi \le \rho\le \nu$,  $y \in \R^2$, $u: D( y, \nu) \to \Sp^2$, and $\gamma_0  \in (0, 2\pi)$ as in Definition~\ref{def:scale}. Let  $M \in \{0, 1, 2, \dots\}$, $\om\in \Sp^2$ a constant, and let $\om_1, \dots, \om_M$ be non-constant harmonic maps with centers $a(\om_j)\in D( y, \xi)$ for each $j \in \{1, \dots, M\}$ and scales $\lambda(\om_{j}) \in (0, \infty)$.  Let $\calQ( \bs \omega)$  be the associated multi-bubble configuration.  Let $ \vec \nu = (\nu, \nu_1, \dots, \nu_M) \in (0, \infty)^{M+1}$ be such that $D(a(\om_j), \nu_j) \subset D(y, \xi)$ for each $j \in \{1, \dots, M\}$. Let $\vec \xi = (\xi,  \xi_1, \dots,\xi_M) \in (0, \infty)^{M+1}$ be such that $\xi_j < \lam(\om_j)$ for each $j \in \{1, \dots, M\}$.  Denote by $\calI_j:= \{ k \neq j \mid  D(a(\om_k), \xi_j) \subset D(a(\om_j), \nu_j)\}$,   and let 
 \EQ{
 D_{j}^*:=D( a(\om_j), \nu_j) \setminus \bigcup_{k\in \calI_j }D(a(\om_k), \xi_j).
 }
Define, 
 \EQ{
  \bfd_{\gamma_0}( u, \calQ( \bs  \omega); D(y, \rho); \vec \nu, \vec \xi) &:=  E\big( u - \calQ(  \bs \omega);  D(y, \rho)\big) + \sum_j  \| u -  \omega_j \|_{L^\infty( D_j^*)}  \\
  & \quad + \| u - \om \|_{L^{\infty}(D( y, \nu) \setminus D(y, \xi))} + E(u; D( y, \nu) \setminus D(y, \xi))+ \frac{\xi}{\rho} + \frac{\rho}{\nu}    \\  
& \quad  + \sum_{j \neq k} \bigg( \frac{\lam(\om_j)}{\lam(\om_k)} +  \frac{\lam(\om_k)}{\lam(\om_j)} + \frac{| a(\om_j) - a(\om_k)|}{\lam(\om_j)} \bigg)^{-1}  \\
&  \quad + \sum_j \Big(\frac{\lam(\om_j)}{ \dist( a(\om_j), \p D(y, \xi))} + \frac{\lambda(\om_j)}{\nu_j} + \frac{\xi_j}{\lam(\om_j)}\Big)  \\
& \quad + \sum_j \sum_{k \in\calI_j }\frac{\xi_j}{\dist(a(\om_k), \partial D( a(\om_j), \nu_j))} .
  }
\end{defn} 

We define a localized distance function to the family of all multi-bubble configurations as follows. 

\begin{defn}[Localized multi-bubble proximity function]   Let $y \in \R^2$,  $\rho \in (0, \infty)$,  $u: D( y, \rho) \to \Sp^2$, and let $\gamma_0 \in (0,2 \pi)$ as in Definition~\ref{def:scale}. We define
\EQ{
\bs\de_{\gamma_0}( u; D(y, \rho)) := \inf_{\bs \omega, \vec \nu, \vec \xi} & \bfd_{\gamma_0}( u, \calQ( \bs  \om);  D(y, \rho); \vec \nu, \vec \xi) 
}
where the infimum above is taken over all $M \in \{0,1, 2, \dots\}$, all possible $M$-bubble configurations $\calQ(\bs \om)$, and over all admissible $\vec \nu = (\nu, \nu_1, \dots, \nu_M)\in (0, \infty)^{M+1},  \vec \xi = (\xi, \xi_1, \dots, \xi_M) \in (0, \infty)^{M+1}$ in the sense of Definition~\ref{def:d}. Since $\gamma_0$ will eventually be fixed we will often suppress the dependence of $\bfd_{\gamma_0}$ and $\bs \de_{\gamma_0}$ on $\gamma_0$ and just write $\bfd, \bs \delta$. 
\end{defn}

We prove the following theorem. 

\begin{mainthm}[Convergence to multi-bubbles in continuous time] \label{thm:main} Let $u(t)$ be the unique solution to~\eqref{eq:hmhf} associated to initial data $u_0 \in \E$. 
Let $T_+ = T_+(u_0) \in (0, \infty]$ denote the maximal time of existence.  There exists $\gamma_0 = \gamma_0(E( u_0))>0$ as in Definition~\ref{def:scale} sufficiently small so that the following conclusions hold. 

\emph{(Finite time blow-up)} 
 Suppose $T_+< \infty$. For every $y \in \R^2$,  
 \EQ{
 \lim_{t \to T_+} \bs \de_{\gamma_0}\big( u(t); D(y, \sqrt{T_+-t})\big)  = 0. 
 }
 Moreover, let $t_n \to T_+$ be any sequence  and let  $D(y_n, \rho_n)$ be any sequence of discs such that $D(y_n, R_n \rho_n) \subset D(y, \sqrt{T_+-t})$ for some sequence $R_n \to \infty$. Suppose  $\al_n, \be_n$ are sequences with $\al_n \to 0$, $\be_n \to \infty$, $\lim_{n \to \infty} \be_n R_n^{-1} = 0$, and
  \EQ{
 \lim_{n \to \infty}   E\big(u(t_n); D(y_n, \be_n \rho_n) \setminus D( y_n, \al_n \rho_n)\big)  = 0. 
 }
 Then, 
 \EQ{
 \lim_{ n \to \infty} \bs \de_{\gamma_0}\big( u(t_n); D( y_n, \rho_n)\big) = 0. 
 }

\emph{(Global solution)} Suppose $T_+ = \infty$. For every $y \in \R^2$, 
\EQ{
 \lim_{t \to \infty} \bs \de_{\gamma_0}\big( u(t); D(y, \sqrt{t})\big)  = 0. 
 }
 Moreover, let $t_n \to \infty$ be any sequence  and let  $D(y_n, \nu_n)$ any sequence of discs such that $D(y_n, R_n \nu_n) \subset D(y, \sqrt{t_n})$ for some sequence $R_n \to \infty$. Suppose  $\al_n, \be_n$ are sequences with $\al_n \to 0$, $\be_n \to \infty$, $\lim_{n \to \infty} \be_n R_n^{-1} = 0$, and
  \EQ{
 \lim_{n \to \infty}   E\big(u(t_n); D(y_n, \be_n \rho_n) \setminus D( y_n, \al_n \rho_n)\big)  = 0. 
 }
 Then, 
 \EQ{
 \lim_{ n \to \infty} \bs \de_{\gamma_0}\big( u(t_n); D( y_n, \rho_n)\big) = 0. 
 }

\end{mainthm}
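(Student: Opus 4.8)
The plan is to prove Theorem~\ref{thm:main} by contradiction, exploiting the dissipation identity~\eqref{eq:en-id-i}: since $\int_0^{T_+}\|\calT(u(t))\|_{L^2}^2\,\ud t = E(u_0) - \lim_{t\to T_+}E(u(t)) < \infty$ when $T_+<\infty$ (and $\int_0^\infty\|\calT(u(t))\|_{L^2}^2\,\ud t<\infty$ when $T_+=\infty$), a solution cannot make infinitely many ``excursions'' away from the multi-bubble family if each one costs a definite amount of dissipation. I treat $T_+<\infty$; the global case is identical after replacing $\sqrt{T_+-t}$ by $\sqrt{t}$. First I would record the \emph{sequential} statement: from finite total dissipation there is $s_n\to T_+$ with $\sqrt{T_+-s_n}\,\|\calT(u(s_n))\|_{L^2}\to0$, so the rescaled maps are Palais--Smale, and the classical bubble-tree analysis (Struwe, Qing, Ding--Tian, Lin--Wang) together with Struwe's Gaussian-weighted monotonicity formula (which rules out nontrivial self-similar profiles) produces a body map $u^*$, blow-up points $\{x^\ell\}$, and $\bs\de_{\gamma_0}(u(s_n);D(y,\sqrt{T_+-s_n}))\to0$ for every $y$; moreover $u(t)\to u^*$ in $\dot H^1_{\loc}(\R^2\setminus\{x^\ell\})$, so the claim is trivial away from the $x^\ell$, and it remains to upgrade $\liminf_{t\to T_+}\bs\de_{\gamma_0}(u(t);D(y,\sqrt{T_+-t}))=0$ to a genuine limit.

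Suppose this fails at some $y$, so that $\bs\de_{\gamma_0}(u(t_n);D(y,\sqrt{T_+-t_n}))\ge\epsilon_0>0$ along some $t_n\to T_+$. Fixing $0<\epsilon_1\ll\epsilon_0\ll1$ and using the sequential statement, I extract for each large $n$ a maximal ``excursion interval'' $[\sigma_n,\tau_n]$ on which $\epsilon_1\le\bs\de_{\gamma_0}(u(t);D(y,\sqrt{T_+-t}))\le\epsilon_0$, with value $\epsilon_1$ at $\sigma_n$ and $\epsilon_0$ at $\tau_n$; after passing to a subsequence these intervals are pairwise disjoint. (This uses that $t\mapsto\bs\de_{\gamma_0}(u(t);\cdot)$ is sufficiently regular, e.g.\ lower semicontinuous, which follows from continuity of the flow in the energy topology and Lemma~\ref{lem:scale}; alternatively one works throughout with $\bfd$ relative to a fixed, slowly varying reference decomposition.) On $[\sigma_n,\tau_n]$ the solution is within $\epsilon_0$ of the multi-bubble family, so I introduce a modulated decomposition $u(t)=\calQ(\bs\om(t))+g(t)$ on $D(y,\sqrt{T_+-t})$, with the bubble scales $\lambda_j(t)$, centers $a_j(t)$, and conformal/target parameters of each bubble fixed by orthogonality of $g(t)$ to the symmetry directions (implicit function theorem, valid once $\epsilon_0$ is small), yielding $\|g(t)\|\lesssim\bs\de_{\gamma_0}(u(t);\cdot)+(\text{scale-separation errors})$ together with modulation equations bounding the parameter velocities by $\lambda_j^{-1}\bigl(\|\calT(u(t))\|_{L^2}+(\text{errors quadratic in }g)+(\text{bubble interactions})\bigr)$.

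The heart of the argument, and the step I expect to be the main obstacle, is to show that each excursion costs a definite amount of dissipated energy,
\EQ{
\int_{\sigma_n}^{\tau_n}\|\calT(u(t))\|_{L^2}^2\,\ud t \ge c\bigl(\epsilon_0,\epsilon_1,E(u_0)\bigr) > 0 .
}
I would obtain this from a localized virial/Lyapunov functional $\calV(t)$ adapted to the parabolic scaling --- morally $\calV(t)=\langle g(t),\calZ(\bs\om(t))\rangle$ with a multiplier $\calZ$ assembled from the scaling generators $\Lambda\om_j$ and cutoffs separating the bubbles and the necks --- designed so that $|\calV(t)|\lesssim\bs\de_{\gamma_0}(u(t);\cdot)$ while, along the flow, $\tfrac{\ud}{\ud t}\calV(t)\gtrsim(\text{coercive quadratic form in }g\text{ and the parameter velocities})-C\|\calT(u(t))\|_{L^2}^2-(\text{interactions})$, using $\calT(\om_j)=0$ and nonnegativity of the Hessian $D^2E(\calQ(\bs\om))$ with kernel exactly the symmetry directions. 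Integrating over $[\sigma_n,\tau_n]$, the $O(\epsilon_0)$ bound on $\calV$ controls the coercive part and hence the total variation of the parameters; combined with the requirement that $\bs\de_{\gamma_0}$ actually increases from $\epsilon_1$ to $\epsilon_0$ --- which forces either a definite change of the scales/centers or the creation of a definite amount of $\|g\|$, both of which feed back through the modulation equations and the coercive term --- this forces $\int_{\sigma_n}^{\tau_n}\|\calT\|_{L^2}^2$ to be bounded below. Summing over the disjoint excursion intervals in $[0,T_+)$ then contradicts $\int_0^{T_+}\|\calT(u(t))\|_{L^2}^2\,\ud t<\infty$. The delicate points are that this bound must be genuinely $L^2$-in-time (not merely $L^1$), that there may be arbitrarily many bubbles at widely separated scales inside one disc, and that the zero modes of $D^2E$ about a degree-one harmonic map decay only logarithmically, which breaks naive coercivity and forces carefully weighted, localized multipliers.

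Finally, for the second assertion --- convergence on the small discs $D(y_n,\rho_n)$ under the vanishing-annulus hypothesis --- I would run the same scheme localized to $D(y_n,\rho_n)$: rescale the disc to unit size and use $E(u(t_n);D(y_n,\be_n\rho_n)\setminus D(y_n,\al_n\rho_n))\to0$ together with Struwe's local energy inequality ($\epsilon$-regularity) to decouple the interior from the exterior, reducing to finitely many interior concentration points and applying the previous steps around each. This localized statement is in fact the inductive unit of the whole proof: it is applied recursively along the bubble tree, with an induction on the quantized energy $\le E(u_0)$, the base case being a single bubble where the coercivity of the previous paragraph is cleanest; the global statement at scale $\sqrt{T_+-t}$ is the outermost instance, where the self-similar monotonicity invoked in the first step supplies the decay of the neck terms $\xi/\rho+\rho/\nu$ and the convergence to a constant on $D(y,\nu)\setminus D(y,\xi)$.
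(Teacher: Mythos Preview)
Your overall contradiction strategy --- infinitely many disjoint excursion intervals, each costing a definite amount of $\int\|\calT\|^2$, contradicting finite total dissipation --- matches the paper. The divergence is in how you obtain the uniform lower bound $\int_{\sigma_n}^{\tau_n}\|\calT(u)\|_{L^2}^2 \ge c > 0$, and here your proposal has a genuine gap.

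You propose modulation: write $u = \calQ(\bs\om(t)) + g(t)$ with orthogonality conditions, build a localized virial/Lyapunov functional, and use coercivity of $D^2E(\calQ(\bs\om))$ modulo its kernel to convert growth of $\bs\delta$ into dissipation. You correctly flag the obstacles --- logarithmic decay of the zero modes for degree-one maps, an arbitrary number $M$ of bubbles at arbitrary scales, the need for a genuine $L^2_t$ bound --- but you do not resolve them. In the non-equivariant setting they are severe: the moduli space of degree-$k$ harmonic maps is $(4k+2)$-dimensional, the linearized operator about a single bubble has no spectral gap, and a multi-bubble modulation scheme with uniform coercivity for all $M$ simultaneously is not available. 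The step ``$\tfrac{\ud}{\ud t}\calV \gtrsim (\text{coercive quadratic form}) - C\|\calT\|_{L^2}^2$'' is exactly where this fails, and without it the argument does not close.

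The paper bypasses modulation entirely. Its mechanism has two soft pieces. First, a \emph{length lemma} (Lemma~\ref{lem:collision-duration}): any excursion from $\bs\delta \le \eps$ to $\bs\delta \ge \eta$ on a disc $D(y_n,\rho_n)$ must last at least $c_0\lambda_{\max,n}^2$, where $\lambda_{\max,n}$ is the largest bubble scale in the initial configuration. This is proved not by tracking parameters but by minimality: one defines (Definition~\ref{def:K}) the \emph{smallest} quantized energy $4\pi K$ for which such excursions exist, and if one were shorter than $\lambda_{\max,n}^2$, the local energy and $L^\infty$ inequalities (Lemmas~\ref{lem:Struwe}--\ref{lem:sup}) would freeze the largest bubbles, forcing the collision to be visible on a strictly smaller sub-disc with energy $<4\pi K$ --- a contradiction. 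Second, on the resulting interval $[s_n, s_n + c_0\lambda_{\max,n}^2]$ where $\bs\delta \ge \eps$, one gets $\lambda_{\max,n}^2\|\calT(u(t))\|_{L^2}^2 \ge c_1$ pointwise by the \emph{contrapositive} of the Compactness Lemma~\ref{lem:compact}: if the rescaled tension were small along some $t_n'$, Palais--Smale bubbling at scale $\lambda_{\max,n}$ would force $\bs\delta(u(t_n');D(y_n,\rho_n)) \to 0$. No Hessian, no virial, no modulation equations.

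Your closing remark about ``an induction on the quantized energy'' is in the right spirit, but note that in the paper the induction is not a recursion down a bubble tree; it is the minimality of $K$ itself, and its sole job is to prove the length lemma.
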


\begin{rem} \label{rem:continuous}
Theorem~\ref{thm:main} can be viewed as partial progress towards the following questions, which arise naturally from the classical sequential bubbling results~\cite{Struwe85, Qing, DT, Wang, QT, Lin-Wang98, Topping-winding}: 
\begin{itemize} 
\item Can the harmonic maps (bubbles) appearing in Theorem~\ref{thm:main1} be taken independently of the time sequence? 
\item In particular, can the decomposition in Theorem~\ref{thm:main1} be taken in continuous time, i.e.,  does $u(t)$ converge in the energy space to $u^*$ plus a superposition of a fixed collection of harmonic maps that are continuously modulated by a finite number of parameters independently of the degree, (for example via the underlying symmetries such as scaling, spatial translations, and rotations)? 
\end{itemize} 

Topping~\cite{Top-JDG, Topping04} made important progress on these and related questions in the case of a global-in-time solution ($T_+ = \infty$), showing the uniqueness of the locations of the bubbling points, and that $u(t)$ converges weakly to a unique harmonic map as $t \to \infty$, all under restrictions on the configurations of bubbles appearing in the sequential decomposition. His assumption, roughly, is that all of the \emph{concentrating} bubbles have the same orientation. Here we do not make any assumptions on the orientations of the bubbles, but our results in the global-in-time case are of a different nature and we do not recover Topping's conclusions. 

Topping answered the questions above in the negative for the HMHF for maps from $\Sp^2$ into certain target manifolds; see~\cite{Topping-winding}.

The first two authors answered the questions above in the affirmative in the case that the target is $\Sp^2$ and the initial data for~\eqref{eq:hmhf} is $k$-equivariant; see~\cite{JL8}.  
\end{rem}

\begin{rem} 
One can view Theorem~\ref{thm:main} as a statement about the non-existence of bubble collisions (asymptotically in time) that destroy multi-bubble structure. Here a bubble collision on a disc  $D(y, \rho)$ is defined via the growth of the function $\bs\de(u(t); D(y, \rho))$, i.e., $u(t)$ starts close to, but then moves away from the family of multi-bubble configurations on some time interval. Roughly speaking, Theorem~\ref{thm:main} reduces the questions in Remark~\ref{rem:continuous} to an analysis of the dynamics of solutions close to the manifold of multi-bubble configurations.

\end{rem}






\begin{rem} 
There are solutions to the HMHF that develop a bubbling singularity in finite time, the first being the examples of Coron and Ghidaglia~\cite{CG} (in dimensions  $\ge 3$) and Chang, Ding, Ye~\cite{CDY} in two dimensions.  Guan, Gustafson, and Tsai~\cite{GGT} and Gustafson, Nakanishi, and  Tsai~\cite{GNT} showed that $k$-equivariant harmonic maps are asymptotically stable for perturbations within their equivariance classes when  $k \ge 3$, and thus there is no finite time blow up for energies close to the harmonic map in that setting. For $k=2$,~\cite{GNT} gave examples of solutions exhibiting infinite time blow up and eternal oscillations, and recently Wei, Zhang, Zhou~\cite{WZZ} constructed such examples in the case $k=1$. Rapha\"el and Schweyer constructed a stable equivariant blow-up regime for $k=1$ in~\cite{RSc-13} and then equivariant blow up solutions with different rates in~\cite{RSc-14}.  Davila, Del Pino, and Wei~\cite{DDPW} constructed examples of solutions simultaneously concentrating a single copy of the ground state harmonic map at distinct points in space. See also the recent work of Del Pino, Musso, and Wei~\cite{DPMW} for a construction of bubble towers with an arbitrary number of bubbles in the case of the critical semi-linear heat equation. 
\end{rem}

\subsection{Summary of the proof}

We give an informal description of the proof of Theorem~\ref{thm:main} and then we discuss how to deduce Theorem~\ref{thm:main1} from it. 

To fix ideas, we consider a solution blowing up at a finite time $T_+< \infty$. Theorem~\ref{thm:main} is proved by contradicting the finiteness of the integral 
\EQ{ \label{eq:T} 
\int_0^{T_+} \| \calT( u(t)) \|_{L^2}^2 \, \ud t < \infty, 
}
via a collision analysis in the event that the theorem fails. 
The collision analysis hinges on the notion of a ~\emph{minimal collision energy} and the corresponding \emph{collision (time) intervals} that accompany it. These are defined as follows (see Section~\ref{ssec:collision}).  We let $K$ be the smallest integer so that there exist time sequences $\sigma_n, \tau_n \to T_+$, a sequence of discs $D(y_n, \rho_n) \subset \R^2$, a number $\eta>0$, and a sequence $\eps_n \to 0$ so that $\bs \de(u(\sigma_n); D( y_n, \rho_n)) \le \eps_n$, $\bs \de( u(\tau_n); D(y_n, \rho_n)) \ge \eta$, and $E( u( \sigma_n); D( y_n, \rho_n)) \to 4 K \pi$ as $n\to \infty$. To ensure that $K$ is well-defined and $\ge 1$ in the event that theorem fails; see Lemma~\ref{lem:K}) we also require that $|[\sigma_n, \tau_n]|  \le \eps_n \rho_n^2$. 
We emphasize that the quantization of the energy of harmonic maps $\R^2 \to \Sp^2$ is used to define $K$ as above.  Roughly speaking, the intervals $I_n :=[\sigma_n, \tau_n]$ have the property that $u$ is close to a multi-bubble configuration on the left endpoint  $t = \sigma_n$ (which we call \emph{bubbling times}) and far from every multi-bubble at the right endpoint $t = \tau_n$ (which we call \emph{ejection times}). 

The minimality of $K$ is used crucially to relate the lengths of the collision intervals $|I_n|$ to the largest scale of the bubbles involved in the collision (i.e., those bubbles that concentrate within the discs $D(y_n, \rho_n)$).  We call this largest scale $\lam_{\max, n}$ and the key Lemma~\ref{lem:collision-duration} shows (roughly) that every sequence of collision intervals $I_n$ has  subintervals $J_n$ of length at least
\EQ{
|J_n| \gtrsim \lam_{\max, n}^2, 
} 
on which $u(t)$ bounded away from the multi-bubble family, i.e,. $\bs \de( u(t); D(y_n, \rho_n)) \ge \eps>0$ for all $t \in J_n$, for some $\eps>0$. The intuition behind this is the following. Suppose there were a sequence of intervals $J_n = [s_n, t_n] \subset I_n$ for which the $s_n$'s are bubbling times and the $t_n$'s are ejection times, but  $|J_n|  \ll \lam_{\max, n}^2$. This leads to a contradiction of the minimality of $K$, because the time-interval $J_n$ is too short relative to the scales of the largest bubbles ($\lambda_{\max, n}$) for them to become involved in a collision, and thus collisions are captured on smaller discs $D(\ti y_n, \ti \rho_n) \subset D(y_n, \rho_n)$ with $\ti \rho_n \ll \lam_{\max, n}$, and these carry strictly less energy than $4 \pi K$; see the proof of Lemma~\ref{lem:collision-duration}. 

The fact that $u(t)$ is at least distance $\eps>0$ away from the multi-bubble family on $J_n$ can be combined with the classical localized elliptic bubbling lemma described in Remark~\ref{rem:seq} (the Compactness Lemma~\ref{lem:compact}) to show that on the interval $J_n$, the tension satisfies 
\EQ{
\inf_{t \in J_n}  \lam_{\max, n}^2\| \calT(u(t)) \|_{L^2}^2 \gtrsim 1.
}
 The main point here is that the Compactness Lemma~\ref{lem:compact} says that $u(t)$ bubbles at scale $\lam_{\max, n}$ along any sequence of times $\ti s_n$ for which $\lim_{n \to \infty} \lam_{\max, n}^2\| \calT(u(\ti s_n)) \|_{L^2}^2 = 0$, which is impossible. At this point we have contradicted~\eqref{eq:T} since the previous two displayed equations combine to give  
\EQ{
\sum_n \int_{J_n} \| \calT( u(t)) \|_{L^2}^2 \, \ud t  \gtrsim \sum_n \abs{J_n} \lam_{\max, n}^{-2} \gtrsim \sum_n 1 = \infty. 
}

The idea of a (minimal) collision energy and associated collision time intervals are related to analogous concepts in the first two authors' work on the soliton resolution conjecture for nonlinear waves and on continuous bubbling for the $k$-equivariant HMHF; see~\cite{JL6, JL7, JL8}. 

Theorem~\ref{thm:main} and the Compactness Lemma~\ref{lem:compact} are the main ingredients in the proof of Theorem~\ref{thm:main1}. Again, focusing on the finite time blow-up case, it is well known (see Lemma~\ref{lem:ss-bu}) that energy does not concentrate at or outside the self-similar scale $\sqrt{T_+-t}$, so it suffices to examine the behavior of $u(t)$ restricted to discs $D( y, \sqrt{T_+-t})$,  for $y \in \R^2$ a point where energy concentrates. Let $t_n \to T_+$ be any time sequence. By Theorem~\ref{thm:main}, and after passing to a subsequence, there exists an integer $ \ti M \ge 1$ and a sequence of  $M$-bubble configurations  $\bs \Omega_{n} = (\Om_n, \Om_{1, n}, \dots, \Om_{\ti M, n})$  and sequences $\vec \xi_n, \vec \nu_n$ as in Definition~\ref{def:d} so that 
\EQ{
\bfd( u(t_n), \bs \Omega_n; D(y, \sqrt{T_+-t}); \vec \xi_n, \vec \nu_n) \to 0 \mas n \to \infty. 
}
However, the decomposition in Theorem~\ref{thm:main1} involves a \emph{fixed} collection of finitely many harmonic maps, $\om_1, \dots, \om_M$, i.e., a collection independent of $n$. To find such a collection from the $\Omega_{j, n}$ we apply the Compactness Lemma~\ref{lem:compact} to each $\Om_{j, n}$, obtaining a fixed collection of bubbles $\{\om_{j, k}\}_{k =1}^{L_j}$ for each $j \in \{1, \dots, \ti M\}$. A delicate point is that the scales and centers $(b_{j, k, n}, \mu_{j, k, n})$ associated to the harmonic maps $\om_{j, k}$ given by the Compactness Lemma~\ref{lem:compact} may not satisfy~\eqref{eq:ao-param} for distinct $j$. But this potential pitfall  is remedied by the refined information in Theorem~\ref{thm:main} which says $u(t)$ approaches the multi-bubble family at every smaller scale $\rho_n  \le \sqrt{T_+-t_n}$ (excluding of course the precise scales of the bubbles themselves). 

In Section~\ref{sec:hm-hmhf} we give background information on harmonic maps and the harmonic map heat flow.  Much of Section~\ref{sec:hm-hmhf} is classical, except perhaps the notions of scale and center of harmonic maps and Lemma~\ref{lem:sup}, which involves the propagation of localized $L^\infty$ estimates for solutions to~\eqref{eq:hmhf}, which we did not find a reference for in the literature.   Section~\ref{sec:collisions} contains the proofs of the main theorems.

\subsection{Notational conventions}
Constants are denoted $C, C_0, C_1, c, c_0, c_1$. We write $A \lesssim B$ if $A \leq CB$ and $A \gtrsim B$ if $A \geq cB$.
Given sequences $A_n, B_n$ we write $A_n \ll B_n$ if $\lim_{n\to \infty} A_n / B_n = 0$.

For any sets $X, Y, Z$ we identify $Z^{X\times Y}$ with $(Z^Y)^X$, which means that
if $\phi: X\times Y \to Z$ is a function, then for any $x \in X$ we can view $\phi(x)$ as a function $Y \to Z$
given by $(\phi(x))(y) := \phi(x, y)$.

\section{Preliminaries} \label{sec:hm-hmhf} 

\subsection{Properties of harmonic maps}\label{sec:hmap}

We use a few well-known features of finite energy harmonic maps $\om: \R^2 \to \Sp^2$ namely, their smoothness, the invariance of harmonicity and the energy under conformal transformations of the domain,  and the fact that the energy is quantized.  


 \begin{thm}\emph{\cite[Theorem 4.1.1]{Hel}\cite[pg. 126, Proposition]{ES}\cite[Theorem 3.6]{SU}~\cite[Section 8, the Remarque on pg. 65]{Lem}}
  \label{thm:hmap}
  Let $\omega:\R^2\to \bbS^2$ be a weak non-constant solution to \eqref{eq:EL} of finite energy. 
  Then $\omega$ is smooth and extends as a smooth harmonic map from the sphere to itself of nonzero degree, which minimizes the energy~\eqref{eq:energy} in its degree class with  $E(\om)=4\pi|\deg(\om)|$. 
 \end{thm}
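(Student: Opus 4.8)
The plan is to assemble the statement from three classical ingredients: interior regularity, a conformal-compactification/removable-singularity step, and the holomorphic structure of harmonic two-spheres.

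\emph{Step 1: interior smoothness.} First I would show that a finite-energy weak solution $\om \in \dot H^1(\R^2;\Sp^2)$ of \eqref{eq:EL} is smooth on $\R^2$. The point is that the target is a round sphere, so the system can be written with a nonlinearity carrying a null (Jacobian/div-curl) structure: with $\om=(\om^1,\om^2,\om^3)$ one has $\Delta \om^i = \sum_{j}\bigl(\om^i\,\partial_k\om^j - \om^j\,\partial_k\om^i\bigr)\partial_k\om^j$, and each summand on the right is, componentwise, a Jacobian determinant of two $\dot H^1$ functions. Wente's inequality (or, for general targets, H\'elein's moving-frame argument) then upgrades $\om$ from $\dot H^1$ to continuous, after which standard elliptic bootstrapping (Calder\'on--Zygmund followed by Schauder) gives $\om\in C^\infty$. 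This is exactly \cite[Theorem 4.1.1]{Hel}, together with the regularity framework of \cite{ES}.

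\emph{Step 2: conformal compactification.} In two dimensions the Dirichlet energy is invariant under conformal changes of the domain metric. Precomposing $\om$ with inverse stereographic projection $\sigma:\Sp^2\setminus\{p\}\to\R^2$ produces a smooth weak harmonic map $\om\circ\sigma:\Sp^2\setminus\{p\}\to\Sp^2$ of the same finite energy. The Sacks--Uhlenbeck removable-singularity theorem for finite-energy harmonic maps from punctured surfaces then shows this map extends smoothly across $p$, yielding a smooth harmonic map $\widetilde\om:\Sp^2\to\Sp^2$; cite \cite[Theorem 3.6]{SU}.

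\emph{Step 3: weak conformality, degree, energy.} For any harmonic map from a Riemann surface the Hopf differential $\phi\,dz^2$, with $\phi=|\partial_x\widetilde\om|^2-|\partial_y\widetilde\om|^2-2i\,\partial_x\widetilde\om\cdot\partial_y\widetilde\om$, is a holomorphic quadratic differential; on $\Sp^2$ there is no nonzero such differential, so $\phi\equiv 0$ and $\widetilde\om$ is weakly conformal. A weakly conformal harmonic map $\Sp^2\to\Sp^2$ is $\pm$holomorphic, i.e.\ a rational map or the conjugate of one (Eells--Wood, Lemaire; \cite{EW, Lem}). For such a map the energy equals the area of the image counted with multiplicity, $E(\widetilde\om)=|\deg(\widetilde\om)|\cdot\mathrm{Area}(\Sp^2)=4\pi|\deg(\widetilde\om)|$. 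Combining this with the universal topological lower bound $E(u)\ge 4\pi|\deg(u)|$ for every $u\in\dot H^1(\Sp^2;\Sp^2)$ — which follows from the pointwise inequality $|u\cdot(\partial_x u\times\partial_y u)|\le |\partial_x u|\,|\partial_y u|\le\tfrac12(|\partial_x u|^2+|\partial_y u|^2)$ and the degree formula $\deg(u)=\tfrac1{4\pi}\int u\cdot(\partial_x u\times\partial_y u)$ — and noting that equality in this bound is attained exactly by $(\pm)$holomorphic maps, we conclude $\widetilde\om$ minimizes the energy in its homotopy (equivalently, degree) class. Finally, if $\deg(\widetilde\om)=0$ then $E(\widetilde\om)=0$, so $\om$ is constant; hence a non-constant $\om$ has nonzero degree.

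\emph{Main obstacle.} The genuinely deep input is Step~1: interior regularity of energy-class weak harmonic maps from surfaces fails for general critical elliptic systems and holds here only because of the special algebraic (null-form) structure of the sphere-valued equation — or, more generally, via H\'elein's moving-frame method. Once smoothness on $\Sp^2$ is in hand, the remaining steps — conformal invariance, the removable-singularity theorem, the Hopf-differential computation, and the topological energy identity — are comparatively soft and essentially algebraic.
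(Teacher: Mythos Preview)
Your proposal is correct and follows essentially the same route as the paper: the paper does not give a proof but only a remark citing H\'elein's regularity, conformal invariance plus the Sacks--Uhlenbeck removable-singularity theorem for the extension to $\Sp^2$, and Lemaire for the energy-degree identity $E(\om)=4\pi|\deg(\om)|$. Your write-up supplies more detail (the null-form structure, the Hopf differential argument, and the explicit topological lower bound), but the overall architecture and the cited ingredients coincide with the paper's outline.
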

 
 \begin{rem} 
 The regularity statement above is due to H\'elein and  holds in the more general setting of weak harmonic maps $\omega \in H^1(\calM, \calN)$ where $\calM$ is a closed, orientable Riemannian surface and $\calN$ is a smooth compact Riemannian manifold. The extension of a smooth, finite energy harmonic map $\om: \R^2 \to \Sp^2$ to a smooth, finite energy harmonic map $\ti \om: \Sp^2 \to \Sp^2$ is a consequence of the conformal equivalence between $\R^2$ and $\Sp^2\setminus \{p_0\}$ via the stereographic projection map, and the Removable Singularity Theorem of Sacks-Uhlenbeck~\cite{SU}.  Here we also use the fact, due to Eells and Sampson \cite{ES}, that in the case of orientable Riemannian surfaces $\calM, \calN$, if $\om: \calM \to \calN$ is smooth and $\phi: \calM \to \calM$ is a conformal diffeomorphism, then $\om\circ \phi$ is harmonic if and only if $\om$ is, and moreover $E( \om) = E(\om\circ \phi)$.  The relationship between the topological degree and the energy (energy quantization) generalizes to harmonic maps between closed, orientable, Riemannian surfaces $\om: \calM \to \calN$, where we have $E( \om) = \mathrm{Area}(\calN)|\deg(\omega)|$, see for example Lemaire~\cite[Section 8, the Remarque on pg. 65]{Lem}. 
  \end{rem} 

\subsubsection{The scale and center of a harmonic map}


Given a non-constant harmonic map $\om:\R^2 \to \Sp^2 \subset \R^3$ recall the notion of scale $\lam( \om; \gamma_0)$ and center $a(\om; \gamma_0)$ from Definition~\ref{def:scale} and Defintion~\ref{def:center}.  

\begin{lem}[Center and scale]  
\label{lem:scale} Let $\gamma_0 \in (0, 2\pi)$, 
let $\om: \R^2 \to \Sp^2 \subset \R^3$ be a non-constant harmonic map, let $\lam(\om) = \lam(\om, \gamma_0)$ be its scale from Definition~\ref{def:scale} and let $a(\om) = a(\om, \gamma_0)$ be a choice of center from Definition~\ref{def:center}. Then $\lambda(\om)$ is uniquely-defined and strictly positive and $a(\omega)$ is well-defined. For all $b \in \R^2$ and $\mu \in (0, \infty)$
\EQ{\label{eq:centscale}
\lambda\big( \om\big( \frac{ \cdot - b}{\mu}\big)\big) = \lambda(\omega)\mu ,   \mand   \Big| a\big( \om\big( \frac{ \cdot  - b}{\mu}\big)\big) - b-a(\omega)\mu\Big| \le 2\lambda(\omega)\mu. 
}
\end{lem}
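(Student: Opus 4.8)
The plan is to first verify that $\lambda(\omega)$ is well-defined, strictly positive, and that the infimum is attained; then deduce the scaling and translation transformation laws by a direct change of variables; and finally control the ambiguity in the center.

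\emph{Well-definedness and positivity of the scale.} Fix $\gamma_0\in(0,2\pi)$. Since $\omega$ is a non-constant finite-energy harmonic map, by Theorem~\ref{thm:hmap} it extends smoothly to the sphere with $E(\omega)=4\pi|\deg\omega|\ge 4\pi$, so $E(\omega)-\gamma_0>0$. For the set defining the infimum to be non-empty, note that as $\lambda\to\infty$ one can take $a$ fixed and use $E(\omega;D(a,\lambda))\to E(\omega)>E(\omega)-\gamma_0$; hence the set of admissible $\lambda$ is a non-empty subset of $(0,\infty)$ and $\lambda(\omega)\ge 0$ is well-defined. Strict positivity is the first real point: one must rule out $\lambda(\omega)=0$. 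If $\lambda(\omega)=0$, there would be a sequence $\lambda_m\to 0$ and points $a_m$ with $E(\omega;D(a_m,\lambda_m))\ge E(\omega)-\gamma_0$, so $E(\omega;\R^2\setminus D(a_m,\lambda_m))\le \gamma_0<2\pi$. By absolute continuity of the (finite) energy measure $|\nabla\omega|^2\,dx$, a disc $D(a_m,\lambda_m)$ of radius $\to 0$ carries energy $\to 0$, which forces $E(\omega)\le\gamma_0<2\pi$, contradicting $E(\omega)\ge 4\pi$. Hence $\lambda(\omega)>0$. Attainment of the infimum, i.e.\ that some $a=a(\omega)$ realizes $E(\omega;D(a,\lambda(\omega)))\ge E(\omega)-\gamma_0$: take $\lambda_m\downarrow\lambda(\omega)$ and admissible centers $a_m$; the $a_m$ stay in a bounded set (otherwise the energy on $D(a_m,\lambda_m)$ with $\lambda_m$ bounded would tend to $0$ by tightness of the finite energy measure, again contradicting $E(\omega)-\gamma_0>0$), so after extracting a limit $a_m\to a_\infty$ and using that $D(a_\infty,\lambda(\omega))$ contains $D(a_m,\lambda_m/2)$ eventually together with continuity of $r\mapsto E(\omega;D(a,r))$, one gets $E(\omega;D(a_\infty,\lambda(\omega)))\ge E(\omega)-\gamma_0$. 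Thus $a(\omega)$ exists; it need not be unique.

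\emph{Transformation laws.} Write $\omega_{b,\mu}(x)=\omega((x-b)/\mu)$. A linear change of variables $x=b+\mu z$ gives $|\nabla\omega_{b,\mu}(x)|^2\,dx=|\nabla\omega(z)|^2\,dz$, so $E(\omega_{b,\mu};D(a,\lambda))=E(\omega;D((a-b)/\mu,\lambda/\mu))$ for every disc. Consequently $\lambda$ is admissible for $\omega_{b,\mu}$ with center $a$ if and only if $\lambda/\mu$ is admissible for $\omega$ with center $(a-b)/\mu$; taking infima yields $\lambda(\omega_{b,\mu})=\mu\,\lambda(\omega)$, which is the first assertion in~\eqref{eq:centscale}. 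For the center, $a(\omega_{b,\mu})$ is \emph{any} point with $E(\omega_{b,\mu};D(a(\omega_{b,\mu}),\mu\lambda(\omega)))\ge E(\omega)-\gamma_0$, equivalently $E(\omega;D((a(\omega_{b,\mu})-b)/\mu,\lambda(\omega)))\ge E(\omega)-\gamma_0$; so $(a(\omega_{b,\mu})-b)/\mu$ is an admissible center for $\omega$. It remains to show any two admissible centers of $\omega$ (at scale $\lambda(\omega)$) lie within distance $2\lambda(\omega)$ of each other; applying this to $(a(\omega_{b,\mu})-b)/\mu$ and $a(\omega)$ and rescaling gives $|a(\omega_{b,\mu})-b-\mu\,a(\omega)|\le 2\mu\lambda(\omega)$.

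\emph{The main obstacle: near-uniqueness of the center.} The crux is the claim that if $E(\omega;D(a_1,\lambda(\omega)))\ge E(\omega)-\gamma_0$ and $E(\omega;D(a_2,\lambda(\omega)))\ge E(\omega)-\gamma_0$ then $|a_1-a_2|\le 2\lambda(\omega)$. Suppose instead $|a_1-a_2|>2\lambda(\omega)$, so the discs $D(a_1,\lambda(\omega))$ and $D(a_2,\lambda(\omega))$ are disjoint. Then
\EQ{
E(\omega)\ \ge\ E\big(\omega;D(a_1,\lambda(\omega))\big)+E\big(\omega;D(a_2,\lambda(\omega))\big)\ \ge\ 2\big(E(\omega)-\gamma_0\big),
}
hence $E(\omega)\le 2\gamma_0<4\pi$, contradicting $E(\omega)\ge 4\pi$ from Theorem~\ref{thm:hmap}. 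This is where the energy quantization of harmonic maps is essential; without the lower bound $E(\omega)\ge 4\pi$ the center could genuinely fail to be quasi-unique. (One should double check the midpoint/boundary bookkeeping so that $|a_1-a_2|>2\lambda$ indeed forces disjointness of the closed or open discs as used; this is the only slightly delicate routine point.) Assembling the three parts completes the proof.
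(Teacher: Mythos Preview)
Your proof is correct, and the crucial step---near-uniqueness of the center via the disjoint-discs energy-doubling argument combined with $E(\omega)\ge 4\pi$---is exactly the paper's. You diverge from the paper in two preparatory steps: for strict positivity of $\lambda(\omega)$ you invoke absolute continuity of the integrable density $|\nabla\omega|^2$ (discs of vanishing area carry vanishing energy), and for boundedness of the approximate centers $a_m$ you invoke tightness of the finite energy measure. The paper instead recycles the same disjointness trick throughout: assuming $\lambda(\omega)=0$, it takes $a_n$ with $E(\omega;D(a_n,1/n))\ge E(\omega)-\gamma_0$, notes that any two such discs must intersect (else $E(\omega)\le 2\gamma_0<4\pi$), concludes $\{a_n\}$ is Cauchy, and gets a contradiction at the limit point; the existence of the center is handled the same way. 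Your measure-theoretic arguments are a bit more direct and independent of the target geometry, while the paper's approach is more uniform and foregrounds that energy quantization is the engine behind every claim. One small wrinkle in your attainment step: the containment $D(a_\infty,\lambda(\omega))\supset D(a_m,\lambda_m/2)$ does not by itself help, since you only control the energy on $D(a_m,\lambda_m)$; what you actually want is $D(a_m,\lambda_m)\subset D(a_\infty,\lambda(\omega)+\epsilon)$ for large $m$, then let $\epsilon\downarrow 0$ using the continuity in $r$ you cite.
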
 

\begin{proof} 
Since $E(\omega;D(0,R))\to E(\omega)$ as $R\to\infty$, it follows that the scale $\lam(\om)$ is well-defined. 
If $\lam( \om)=0$, then there exist $a_n\in\R^2$ so that 
\EQ{\label{eq:scale contra}
E( \om; D(a_n, 1/n)) \ge E( \om) - \gamma_0 \qquad \forall\; n\ge1. 
}
If $n\ne m$, the $D(a_n, 1/n)\cap D(a_m, 1/m)\ne\emptyset$. Indeed, otherwise
\[
E(\omega)\ge E( \om; D(a_n, 1/n))+E( \om; D(a_m, 1/m)) \ge 2E( \om) - 2\gamma_0
\]
whence $E(\omega)\le 2\gamma_0< 4\pi$ which contradicts that $\omega$ is not constant. Therefore, $\{a_n\}_{n=1}^\infty$ is a Cauchy sequence in $\R^2$, and $a_n\to a_\infty$. Passing to the limit in~\eqref{eq:scale contra} gives a contradiction.

To see that center $a(\om)$ is well-defined, take $\lambda_n\to \lam(\om)$ and $a_n\in\R^2$ such that 
\[
E( \om; D( a_n, \lam_n)) \ge E( \om) -  \gamma_0
\]
As before, we conclude that no two disks $\{D( a_n, \lam_n)\}_{n=1}^\infty$ can be disjoint. Thus,  $a_n\in\R^2$ lie in a compact set and we may assume that $a_n\to a_\infty$ as $n\to\infty$, which is the desired center. 
We note that $\lam( \om)$ is uniquely defined, but $a(\om)$ is defined only up to a distance of $2\lam( \om)$.  The properties~\eqref{eq:centscale} are immediate from the definitions.  
\end{proof} 



\begin{lem}[Decay of harmonic maps] 
\label{lem:decay}  There exists $\gamma_0 \in (0, 2 \pi)$ with the following property. For any $0<\gamma \le \gamma_0$ and 
 any harmonic map $\om: \R^2 \to \Sp^2 \subset \R^3$ the exterior energy decays at the following rate:
\EQ{\label{eq:ext_ener_dec}
E(\om; \R^2 \setminus D(a(\om; \gamma); R\lam(\om, \gamma)))  \le  \pi R^{-2} 
}
for all $R \ge 2$. 
\end{lem}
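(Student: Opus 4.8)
The plan is to use the explicit structure of finite-energy harmonic maps $\om:\R^2\to\Sp^2$ afforded by Theorem~\ref{thm:hmap}: such a map extends to a smooth harmonic (hence, up to orientation, conformal) map $\Sp^2\to\Sp^2$, so that via stereographic projection $\om$ is, up to precomposition with a Möbius transformation of $\C\cup\{\infty\}$, a rational map $R(z)$ or $\overline{R(z)}$. In particular the energy density $e(\om)(x)=\tfrac12|\na\om(x)|^2$ is, after stereographic projection, a smooth density on $\Sp^2$ of total mass $4\pi|\deg\om|$, and the pushforward of $\vert\nabla\om\vert^2\,dx$ under $\om$ (counted with multiplicity) is exactly $|\deg\om|$ times the round area form. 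What I really want is a \emph{uniform} tail bound, with constant independent of $\om$, and the mechanism for uniformity is conformal invariance: the energy of $\om$ on an annulus $\R^2\setminus D(a,R\lambda)$ is conformally equivalent to the energy of a rescaled/translated harmonic map on a fixed annulus, and rescaling/translating does not change which quantities appear in~\eqref{eq:ext_ener_dec} (this is precisely the content of Lemma~\ref{lem:scale}, $\lambda(\om_{b,\mu})=\mu\lambda(\om)$ and $a$ transforms correspondingly up to $2\lambda$). So it suffices to prove the estimate for normalized $\om$ with $\lambda(\om;\gamma)=1$ and $a(\om;\gamma)$ in a fixed ball, and then bootstrap to general $\om$ by scaling; I should also first fix $\gamma_0$ small enough (e.g.\ $\gamma_0 < 2\pi$, but in fact we will want it much smaller, depending only on absolute constants) so that the small-energy $\varepsilon$-regularity theory for harmonic maps applies on the relevant annuli.

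The key steps, in order, are as follows. \emph{Step 1: reduction by scaling.} By Lemma~\ref{lem:scale} it is enough to bound $E(\om;\R^2\setminus D(a(\om),R))$ for maps with $\lambda(\om;\gamma)=1$. \emph{Step 2: total exterior energy is small.} By the definition of the scale (Definition~\ref{def:scale}), for any $\eta>0$ there is a center-type point $a$ with $E(\om;D(a,1+\eta))\ge E(\om)-\gamma$, hence $E(\om;\R^2\setminus D(a,1+\eta))\le\gamma\le\gamma_0$; choosing $\gamma_0$ below the $\varepsilon$-regularity threshold, we get that $\om$ has small energy on every disc contained in the exterior region $\R^2\setminus D(a,2)$, say. \emph{Step 3: $\varepsilon$-regularity / monotonicity on dyadic annuli.} On each dyadic annulus $A_k=\{2^k\le |x-a|\le 2^{k+1}\}$ with $k\ge1$, the small total energy lets us invoke the small-energy decay estimate for harmonic maps (a standard consequence of the Bochner/Hélein machinery, or alternatively of the conformal rescaling $x\mapsto 2^k x$ which maps $A_k$ to a fixed annulus on which $\om$ has energy $\le\gamma_0$): there is an absolute $C$ with $E(\om;A_k)\le C\,\gamma_0\, 2^{-2k}$ — i.e.\ the energy on dyadic annuli decays geometrically with a power $2^{-2k}$, reflecting that $e(\om)$ pulled back to the sphere is smooth and vanishes to order $|z|^{-4}$ near the point $\infty$ once the scale is normalized. \emph{Step 4: sum the tails and fix constants.} Summing $\sum_{2^k\ge R} E(\om;A_k)\le C\gamma_0 \sum_{2^k\ge R} 2^{-2k}\lesssim C\gamma_0 R^{-2}$; absorbing the center ambiguity of $2\lambda(\om)$ (Definition~\ref{def:center}, Lemma~\ref{lem:scale}) changes $R$ by a bounded factor for $R\ge2$, and then shrinking $\gamma_0$ so that $C\gamma_0\le\pi$ (or adjusting the radius threshold) gives exactly~\eqref{eq:ext_ener_dec} with constant $\pi$ for all $R\ge2$.

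The main obstacle, and the step deserving the most care, is \emph{Step 3}: getting the decay exponent $R^{-2}$ with a constant \emph{uniform over all harmonic maps} $\om$. Pointwise $\varepsilon$-regularity gives $|\na\om(x)|^2\lesssim |x-a|^{-2}\,E(\om;D(x,|x-a|/2))$, which only yields a logarithmically divergent (i.e.\ $O(1)$, not $o(1)$) bound unless one also exploits that the energy on the surrounding annulus is itself already decaying — so one needs an iteration or a hole-filling/Morrey-decay argument, or, cleanest here, the observation that after stereographic projection $\om$ is conformal, so $|\na\om(x)|^2$ equals (a constant times) the Jacobian of $\om$, and the pushforward measure argument of Step 3 becomes: $E(\om;\R^2\setminus D(a,R)) = |\deg\om|\cdot(\text{area of }\om(\R^2\setminus D(a,R)))$, and one shows this image has small round-area because $\om$ maps the large disc $D(a,R)$ onto "most" of the sphere. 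Quantifying "most" uniformly is where the normalization $\lambda(\om)=1$ plus the $\varepsilon$-regularity-controlled oscillation of $\om$ on the exterior annulus must be combined; I expect to spend the bulk of the proof making that quantitative and checking the constant can be taken absolute. Everything else — the scaling reduction, summing a geometric series, and adjusting $\gamma_0$ — is routine.
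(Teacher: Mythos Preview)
Your overall strategy is sound --- normalize by scaling, observe the exterior energy is $\le\gamma$, then extract $R^{-2}$ decay --- but you correctly identify Step~3 as the crux and do not actually carry it out: the dyadic claim $E(\om;A_k)\le C\gamma_0\,2^{-2k}$ is asserted, not proved, and the mechanisms you list (hole-filling, Morrey iteration, Jacobian/area argument) each require nontrivial further work to make uniform in $\om$. The Jacobian argument as you phrase it is also slightly off: for a degree-$k$ conformal map the energy over a region equals the area of its image \emph{counted with multiplicity}, not $k$ times the geometric area of the image, so ``$\om(D(a,R))$ covers most of the sphere'' does not by itself yield the bound without first controlling multiplicities near $\infty$.

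The paper bypasses all of this with a single conformal trick you did not consider: after normalizing $a(\om)=0$, $\lambda(\om)=1$, set $\tilde\om(z):=\om(1/z)$. By conformal invariance of the energy, $E(\om;\R^2\setminus D(0,R))=E(\tilde\om;D(0,1/R))$, and by Sacks--Uhlenbeck the singularity of $\tilde\om$ at $0$ is removable, with $E(\tilde\om;D(0,1))\le\gamma$. One application of $\eps$-compactness (Lemma~\ref{lem:eps-compact}) followed by a standard bootstrap through the harmonic map equation yields a \emph{uniform pointwise} bound $|\nabla\tilde\om(z)|\le 1$ on $D(0,1/2)$ once $\gamma\le\gamma_0$, whence $E(\tilde\om;D(0,r))\le\pi r^2$ for all $r\le 1/2$, which is exactly~\eqref{eq:ext_ener_dec}. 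This delivers the sharp exponent and constant in two lines, with uniformity coming for free from a single local estimate near the origin rather than from a dyadic iteration whose constants you would still have to track.
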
 

We use the following $\eps$-compactness result of Ding and Tian~\cite{DT} in the proof of Lemma~\ref{lem:decay}. 

\begin{lem} [$\eps$-compactness] \emph{\cite[Lemma 2.1]{DT}}  \label{lem:eps-compact}  Let $y \in \R^2$ and let $u:D(y, 1) \to \Sp^2 \subset \R^3$ belong to the class $W^{2, 2}(D(y, 1); \Sp^2)$. Then, there exists $\eps_0>0, C>0$ such that if $E(u; D(y, 1)) < \eps_0$, then
\EQ{
\| u - u_{\operatorname{avg}} \|_{W^{2, 2}(D(y, 1/2))} \le C \Big( \sqrt{E(u; D(y, 1))} +  \| \calT(u) \|_{L^2( D(y, 1))}  \Big)
}
where $u_{\operatorname{avg}}$ denotes the mean of $u$ over the disc $D(y, 1)$. In particular, 
\EQ{
\| u-u_{\operatorname{avg}} \|_{L^\infty( D( y, 1/2))} \le  C \Big( \sqrt{E(u; D(y, 1))} +  \| \calT(u) \|_{L^2( D(y, 1))}  \Big)
} 
\end{lem}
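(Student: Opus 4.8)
The plan is to follow the standard $\eps$-regularity argument for approximately harmonic maps, with the tension $\calT(u)$ entering only as an $L^2$ inhomogeneity; the hypothesis $u\in W^{2,2}(D(y,1);\Sp^2)$ makes every step below legitimate a priori, so the content is purely the quantitative bound. After a translation I may take $y=0$ and write $D_r:=D(0,r)$. First I would rewrite the tension identity $\calT(u)=\De u+u\abs{\nabla u}^2$, using $\abs u\equiv1$, as the semilinear elliptic equation $\De u=u\abs{\nabla u}^2-\calT(u)$ on $D_1$. The nonlinearity $u\abs{\nabla u}^2$ is energy-critical in dimension two, and the whole point is that it becomes \emph{absorbable} once the Dirichlet energy $\int_{D_1}\abs{\nabla u}^2$ is small.

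Concretely, for all radii $1/2\le s<t\le1$ I would combine the interior $W^{2,2}$-estimate for the Laplacian --- a cutoff supported in $D(0,t)$ equal to $1$ on $D(0,s)$, the Calder\'on--Zygmund inequality, and the Poincar\'e inequality to handle the lower-order term ---
\[
\|\nabla^2 u\|_{L^2(D(0,s))}\lesssim\|\De u\|_{L^2(D(0,t))}+(t-s)^{-2}\,E(u;D_1)^{1/2},
\]
the scale-invariant Gagliardo--Nirenberg inequality (using $t\ge\tfrac12$),
\[
\big\|\,\abs{\nabla u}^2\,\big\|_{L^2(D(0,t))}=\|\nabla u\|_{L^4(D(0,t))}^2\lesssim\|\nabla u\|_{L^2(D_1)}\,\|\nabla^2 u\|_{L^2(D(0,t))}+\|\nabla u\|_{L^2(D_1)}^2,
\]
and the smallness hypothesis, $\|\nabla u\|_{L^2(D_1)}^2=2E(u;D_1)<2\eps_0$. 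Feeding the last two bounds into $\De u=u\abs{\nabla u}^2-\calT(u)$ would yield, for all $1/2\le s<t\le1$,
\[
\|\nabla^2 u\|_{L^2(D(0,s))}\le C\eps_0^{1/2}\,\|\nabla^2 u\|_{L^2(D(0,t))}+\frac{C}{(t-s)^2}\Big(E(u;D_1)^{1/2}+\|\calT(u)\|_{L^2(D_1)}\Big)
\]
with $C$ an absolute constant. Then I would remove the Hessian over $D(0,t)$ from the right-hand side by the standard iteration lemma: choose $\eps_0$ so small that $C\eps_0^{1/2}<1$, and use the a priori finiteness $\|\nabla^2 u\|_{L^2(D_1)}<\infty$ --- the only place the $W^{2,2}$ hypothesis enters --- to obtain $\|\nabla^2 u\|_{L^2(D_{1/2})}\le C(E(u;D_1)^{1/2}+\|\calT(u)\|_{L^2(D_1)})$. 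Together with the Poincar\'e estimate $\|u-u_{\operatorname{avg}}\|_{L^2(D_{1/2})}+\|\nabla u\|_{L^2(D_{1/2})}\lesssim E(u;D_1)^{1/2}$ (mean over $D_1$), this is exactly the asserted $W^{2,2}$ bound; the pointwise statement then follows from the continuous Sobolev embedding $W^{2,2}(\R^2)\hookrightarrow L^\infty(\R^2)$ (factoring through $W^{1,q}$ for any finite $q>2$), applied after a bounded extension $W^{2,2}(D_{1/2})\to W^{2,2}(\R^2)$.

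I expect the absorption step to be the only genuinely delicate one: the interior estimate and the interpolation must be set up consistently on the nested balls $D(0,s)\subset D(0,t)$, and the domain-localization --- the appearance of $\|\nabla^2 u\|_{L^2(D(0,t))}$ on the right --- must be dealt with through the iteration lemma, which is precisely where the smallness $\eps_0$ is spent; everything else is routine elliptic theory. An alternative route, closer to~\cite{DT}, would instead exploit the div--curl (Jacobian) structure of the nonlinearity: writing $u^\alpha\abs{\nabla u}^2=\sum_\beta\Omega^{\alpha\beta}\cdot\nabla u^\beta$ with $\Omega^{\alpha\beta}:=u^\alpha\nabla u^\beta-u^\beta\nabla u^\alpha$ antisymmetric in $\alpha,\beta$, one has $\div\Omega^{\alpha\beta}\in L^2$ (it involves only $\calT(u)$) while $\curl\Omega^{\alpha\beta}$ is a Jacobian determinant, so a Hodge decomposition together with Wente's inequality yields an $\eps$-decay estimate for the local Dirichlet energy, after which Morrey's lemma and a bootstrap finish the argument; with the $W^{2,2}$ hypothesis already in hand, however, the interpolation argument above is shorter.
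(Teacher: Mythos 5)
The paper does not prove this lemma; it is quoted directly from Ding--Tian \cite[Lemma 2.1]{DT}, whose proof is precisely the small-energy interior estimate you describe (Calder\'on--Zygmund $W^{2,2}$ estimate on nested discs, Ladyzhenskaya/Gagliardo--Nirenberg interpolation to absorb the critical nonlinearity once $\eps_0$ is small, the standard iteration lemma to remove the Hessian from the right-hand side, and Sobolev embedding for the $L^\infty$ conclusion), so your argument is correct and matches the cited source. The only slip is the sign in the rewritten equation --- since $\calT(u)=\De u+u\abs{\nabla u}^2$ one has $\De u=\calT(u)-u\abs{\nabla u}^2$ --- which is immaterial for every estimate that follows.
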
 
\begin{proof}[Proof of Lemma~\ref{lem:decay}] 
With loss of generality $a(\omega; \gamma)=0$ and $\lambda(\omega; \gamma)=1$. Consider the harmonic map  $\tilde\omega(z)=\omega(1/z)$
 for which $0$ is a removable singularity (see \cite{SU}) and  $E(\tilde\omega;D(0;1))\le\gamma$. Applying Lemma~\ref{lem:eps-compact}, we conclude that
\[
\| D^2\, \tilde\omega\|_{L^2(D(0,1/2))} \les\sqrt{\gamma} 
\]
whence $\tilde\omega\in W^{1,p}(D(0,1/2))$ for any $2<p<\infty$. From the equation $\Delta\tilde\omega+\tilde\omega|\nabla\tilde\omega|^2=0$,  
it follows that $D^2 \tilde\omega \in L^p(D(0,1/2))$ for any $2<p<\infty$. In particular, there exists an absolute constant $\gamma_0>0$ such that $0< \gamma\le \gamma_0$ ensures that 
\[
| \nabla \tilde\omega(z)| \le 1\quad \forall\; |z|\le 1/2, 
\]
whence $E(\omega,\R^2\setminus D(0,1/r))=E(\tilde\omega,D(0,r))\le \pi r^2$ for all $r\le\frac12$. 
\end{proof}

\begin{lem}[Energy of multi-bubbles]  \label{lem:mb-energy}  Let $y_n \in \R^2$, $\rho_n>0$ be sequences, and $M \in \N$. Let $\om_\infty \in \Sp^2$ be a constant, $\om_{1}, \dots, \om_{M}$ be nontrivial harmonic maps, and let $b_{n, j} \in D( y_n, \rho_n)$ and $\mu_{n ,j} \in (0, \infty)$ for $j \in \{1, \dots, M\}$ be sequences so that,  
\EQ{\label{eq:sepcond}
\lim_{n \to \infty} \Bigg[\sum_{j \neq k} \bigg( \frac{\mu_{n, j}}{\mu_{n, k} } +  \frac{\mu_{n, k}}{\mu_{n, j}} &+ \frac{| b_{n, j} - b_{n, k}|}{\mu_{n, j}} \bigg)^{-1} + \sum_{j=1}^{M} \frac{\mu_{n, j}}{ \dist(b_{n, j}, \p D(y_n, \rho_n))} \Bigg] = 0. 
}
Then, 
\EQ{
\lim_{n \to \infty} E\Big( \calQ\big(\om_{\infty}, \om_{1} \big( \frac{\cdot - b_{n, 1}}{\mu_{n, 1}}\big), \dots, \om_{M}\big( \frac{\cdot - b_{n, M}}{\mu_{n, M}}\big)\big); D( y_n, \rho_n)\Big) = \sum_{j =1}^M E( \om_j). 
}
\end{lem}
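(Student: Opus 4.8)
The plan is to expand the Dirichlet energy of the superposition, separate the \emph{diagonal} contributions of the individual bubbles from the \emph{cross} interaction terms, and show the former converge to $E(\om_j)$ while the latter vanish. Writing $\calQ_n := \calQ\big(\om_\infty, \om_1(\frac{\cdot - b_{n,1}}{\mu_{n,1}}), \dots, \om_M(\frac{\cdot - b_{n,M}}{\mu_{n,M}})\big)$ and $w_{n,j} := \na\big[\om_j(\frac{\cdot - b_{n,j}}{\mu_{n,j}})\big] = \mu_{n,j}^{-1}(\na\om_j)(\frac{\cdot-b_{n,j}}{\mu_{n,j}})$, the fact that $\om_\infty$ is constant gives $\na\calQ_n = \sum_{j=1}^M w_{n,j}$, hence
\[
E(\calQ_n; D(y_n,\rho_n)) = \sum_{j=1}^M \frac12\int_{D(y_n,\rho_n)}|w_{n,j}|^2 + \sum_{1\le j<k\le M}\int_{D(y_n,\rho_n)} w_{n,j}\cdot w_{n,k}.
\]
For the diagonal terms, the change of variables $x = b_{n,j} + \mu_{n,j}y$ (and scale-invariance of the $2$D Dirichlet energy) turns $\frac12\int_{D(y_n,\rho_n)}|w_{n,j}|^2$ into $\frac12\int_{\Omega_{n,j}}|\na\om_j|^2$ with $\Omega_{n,j} := \mu_{n,j}^{-1}(D(y_n,\rho_n) - b_{n,j})$; since $b_{n,j}\in D(y_n,\rho_n)$, the set $\Omega_{n,j}$ contains the disc centered at the origin of radius $\mu_{n,j}^{-1}\dist(b_{n,j},\partial D(y_n,\rho_n))$, and this radius tends to $\infty$ by the last group of terms in~\eqref{eq:sepcond}. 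Monotone convergence then yields $\frac12\int_{D(y_n,\rho_n)}|w_{n,j}|^2 \to E(\om_j)$, so everything reduces to showing each cross term tends to $0$.

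For the cross terms I would argue by contradiction, using the exterior energy decay of harmonic maps (Lemma~\ref{lem:decay}) together with~\eqref{eq:sepcond}. Set $\ell_i := \lam(\om_i;\gamma_0)$, $a_i := a(\om_i;\gamma_0)$, and for $R\ge 2$ put $B_{n,i}(R) := D(b_{n,i} + \mu_{n,i}a_i, R\mu_{n,i}\ell_i)$; a change of variables and Lemma~\ref{lem:decay} give $\|w_{n,i}\|_{L^2(\R^2\setminus B_{n,i}(R))}^2 \le 2\pi R^{-2}$, while $\|w_{n,i}\|_{L^2(\R^2)}^2 = 2E(\om_i)$. Fix $j\ne k$; passing to a subsequence and relabeling, assume $\mu_{n,j}\le\mu_{n,k}$ for all $n$. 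Splitting $\int_{D(y_n,\rho_n)}|w_{n,j}||w_{n,k}|$ over $B_{n,j}(R)$ and its complement and applying Cauchy--Schwarz, the complementary part is $\le 2\sqrt{\pi E(\om_k)}\,R^{-1}$ (small once $R$ is large) and the remaining part is $\le\sqrt{2E(\om_j)}\,\|w_{n,k}\|_{L^2(B_{n,j}(R))}$, so it suffices to show $\|w_{n,k}\|_{L^2(B_{n,j}(R))}\to 0$ for each fixed $R$. A change of variables adapted to $\om_k$ identifies this norm squared with $\int_{D(p_n,r_n)}|\na\om_k|^2$, where $p_n := \mu_{n,k}^{-1}(b_{n,j} + \mu_{n,j}a_j - b_{n,k})$ and $r_n := R\,\mu_{n,j}\mu_{n,k}^{-1}\ell_j \le R\ell_j$. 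Passing to a further subsequence we may assume $\mu_{n,j}/\mu_{n,k}\to c\in[0,1]$. If $|p_n|\to\infty$, then since $r_n$ is bounded the disc $D(p_n,r_n)$ eventually lies in $\R^2\setminus D(a_k, R''\ell_k)$ for any prescribed $R''$, so Lemma~\ref{lem:decay} bounds the integral by $2\pi(R'')^{-2}$; if instead $|p_n|$ stays bounded, then along a further subsequence $p_n\to p_\infty$ and $r_n\to Rc\ell_j$, and the integral converges to $\int_{D(p_\infty, Rc\ell_j)}|\na\om_k|^2$, which is positive only if $c>0$. But $c\in(0,1]$ together with $\mu_{n,k}^{-1}|b_{n,j}-b_{n,k}|$ bounded (which $|p_n|$ bounded forces, since $\mu_{n,j}\mu_{n,k}^{-1}a_j$ is bounded) would make $\frac{\mu_{n,j}}{\mu_{n,k}} + \frac{\mu_{n,k}}{\mu_{n,j}} + \frac{|b_{n,j}-b_{n,k}|}{\mu_{n,j}}$ bounded, contradicting~\eqref{eq:sepcond}. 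Hence $\|w_{n,k}\|_{L^2(B_{n,j}(R))}\to 0$ in every case, and the cross term vanishes.

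The only genuine obstacle is this cross-term estimate, and the delicate configuration inside it is the ``bubble-tree'' case $c=0$, i.e.\ $\mu_{n,j}\ll\mu_{n,k}$ with $b_{n,j}$ and $b_{n,k}$ close at the coarse scale $\mu_{n,k}$: then the cores $B_{n,j}(R)$ and $B_{n,k}(R)$ are nested rather than disjoint, so one cannot separate the two bubbles in space and must instead exploit that, rescaled to the coarser scale $\mu_{n,k}$, the fine core $B_{n,j}(R)$ collapses to a point ($r_n\to 0$) so that $\om_k$ carries no energy over it. The rest---the change of variables, the order of the quantifiers ($\e$, then $R$, then the subsequence controlling $p_n$ and $r_n$), and the identification of the limiting discs---is routine bookkeeping, and I would not expect surprises there.
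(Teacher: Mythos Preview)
Your proof is correct and follows essentially the same approach as the paper: expand the energy, show the diagonal terms converge to $\sum_j E(\om_j)$ using the boundary separation in~\eqref{eq:sepcond}, and show the cross terms vanish using the asymptotic orthogonality in~\eqref{eq:sepcond} together with the exterior decay of harmonic maps. The paper's own proof is much terser---it dispatches the cross terms in one line, asserting $\int |\nabla\om_{n,j}||\nabla\om_{n,k}| = o_n(1)$ ``by the first term of~\eqref{eq:sepcond}''---whereas you have written out in full the case analysis (comparable scales vs.\ bubble-tree configuration) that underlies that assertion.
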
 

\begin{proof}[Proof of Lemma~\ref{lem:mb-energy}]
To simplify notation within the proof, we use the shorthand $\om_{n, j} = \om_j\big( \frac{\cdot - b_{n, j}}{\mu_{n, j}} \big)$. Expanding the energy, we obtain
\EQ{\nonumber
E( \calQ(\om_{n, 1}, \dots, \om_{n, M}); D( y_n, \rho_n)) &= \sum_{j=1}^M E(\omega_{n, j}; D( y_n, \rho_n))  + \frac12\sum_{j\ne k } \int_{D(y_n,\rho_n)} \nabla \omega_{n, j}\nabla \omega_{n, k}. 
}
By the separation condition \eqref{eq:sepcond} with respect to $\partial D(y_n,\rho_n)$ and Lemma~\ref{lem:decay},
\[
E(\omega_{n, j}; D( y_n, \rho_n)) = E(\omega_{n, j}) + o_n(1)
\]
as $n\to\infty$. On the other hand, if $j\ne k$, then 
\EQ{
\Big| \int_{D(y_n,\rho_n)} \nabla \omega_{n, j}\nabla \omega_{n, k} \Big| &\le \int  |\nabla \omega_{n, j}| |\nabla \omega_{n, k}| = o_n(1)
}
by the first term of~\eqref{eq:sepcond}. 
\end{proof}

%
%
%

\subsection{Properties of the harmonic map heat flow} 
\subsubsection{Well-posedness} 

The starting point for our analysis of the HMHF is the classical result of Struwe~\cite{Struwe85}, which says that the initial value problem is well-posed for data in the energy space and solutions are regular up to their maximal time. 

Following Struwe, we introduce the space, 
\EQ{
\calV_{\tau}^T:= \Big\{& u: [\tau, T] \times \R^2 \to \Sp^2 \subset \R^3 \mid \, u\, \,  \textrm{is measurable, and }   \\
&\quad \sup_{t \in [\tau, T]} E( u(t)) + \int_{\tau}^T \|\p_t u(t)\|_{L^2}^2 + \| \na^2 u(t)\|_{L^2}^2 \, \ud t < \infty \Big\}
}
We use the shorthand $\calV^T = \calV_0^T$. 


\begin{thm}[Local well-posedness]\emph{\cite[Theorem 4.1]{Struwe85}}   \label{lem:lwp} Let $u_0 \in \E$. 
Then, there exists a maximal time of existence $T_+= T_+(u_0)$ and a unique solution $u \in \bigcap_{T < T_+} \calV^T$ to~\eqref{eq:hmhf} with $u(0)= u_0$. The solution $u(t)$ is regular (e.g., $C^2$) on the open interval $(0, T_+)$. 


A finite maximal time $T_+< \infty$ is characterized by the existence of an integer $L \ge 1$, a number $\eps_0>0$, and points $\{x_{\ell}\}_{\ell=1}^L \subset \R^2$ such that 
\EQ{
\limsup_{t \to T_+} E( u(t); D(x_\ell, R))  \ge \eps_0, \quad \forall \, \, R>0, \quad \forall 1 \le \ell \le L . 
}
The $\{x_{\ell}\}_{\ell =1}^L$ are called bubbling points and there are at most finitely many. There exists a finite energy mapping $u^*: \R^2 \to \Sp^2$, called the body map, such that $u(t) \rightharpoonup u^*$ as $t \to T_+$ weakly in $H^1( \R^2; \Sp^2)$ and strongly in $H^1_{\loc}( \R^2 \setminus \{x_{\ell}\}_{\ell =1}^L; \Sp^2)$.   

The energy $E(u(t))$ is continuous and non-increasing as a function of $t \in [0, T_+)$,  and for any $t_1 \le t_2 \in [0, T_+)$,  there holds 
\EQ{ \label{eq:energy-identity} 
E(u(t_2)) +  \int_{t_1}^{t_2} \| \calT(u(t)) \|_{L^2}^2\, \ud t = E(u(t_1)). 
}
In particular, there exists $E_+:= \lim_{t \to T_+} E( u(t))$, and 
\EQ{ \label{eq:tension-L2} 
\int_0^{T_+}\| \calT(u(t)) \|_{L^2}^2\, \ud t < \infty. 
}
\end{thm}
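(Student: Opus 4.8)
The plan is to reconstruct Struwe's original argument~\cite{Struwe85}, whose backbone is a \emph{small-energy parabolic regularity estimate}: there is a universal threshold $\e_1 > 0$ such that if a solution (or a suitable approximation) of~\eqref{eq:hmhf} has Dirichlet energy at most $\e_1$ on a parabolic cylinder $Q_R(z_0) = D(x_0, R) \times (t_0 - R^2, t_0)$, then it enjoys uniform smooth bounds on the half-cylinder $Q_{R/2}(z_0)$, quantitatively in terms of $\e_1$, $R$, and $E(u_0)$ alone. This is the two-dimensional critical ingredient — it exploits that the Dirichlet energy is scale invariant in dimension two — and establishing it (via a Bochner-type differential inequality for $|\na u|^2$, Moser iteration, and parabolic $L^p$/Schauder estimates) is where essentially all the work lies; the rest of the theorem follows from it more or less formally. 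I expect this step, and in particular the \emph{uniformity} of the estimate in the penalization parameter introduced below, to be the main obstacle.

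To build local solutions I would first regularize the $\Sp^2$ constraint by the penalized energy $E_\e(u) = \tfrac12 \int_{\R^2} |\na u|^2 + \tfrac{1}{4\e}\int_{\R^2}(|u|^2 - 1)^2$ for maps $u \colon \R^2 \to \R^3$, whose gradient flow $\p_t u_\e = \De u_\e - \tfrac1\e(|u_\e|^2 - 1)u_\e$ with $u_\e(0) = u_0$ is a semilinear heat equation. Standard parabolic theory (solving on exhausting balls with Neumann data, then letting the balls fill $\R^2$) produces global smooth $u_\e$ obeying the dissipation identity $E_\e(u_\e(t_2)) + \int_{t_1}^{t_2}\|\p_t u_\e\|_{L^2}^2 = E_\e(u_\e(t_1))$ and $\sup_t E_\e(u_\e(t)) \le E(u_0)$. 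Applying the $\e$-regularity estimate — the penalty term is controlled by the Dirichlet energy and the dissipation — one gets uniform-in-$\e$ smooth bounds on any parabolic cylinder where the energy stays below $\e_1$. Since the total energy never exceeds $E(u_0)$, at each time at most $N := \lfloor E(u_0)/\e_1 \rfloor$ spatial points can carry more than $\e_1$ of energy, and off a closed ``concentration set'' $\Sigma$ meeting each time slice in at most $N$ points one has uniform control. Extracting a subsequential limit $u_\e \to u$, locally smoothly off $\Sigma$, the penalty forces $|u| \equiv 1$, and $u$ solves~\eqref{eq:hmhf} off $\Sigma$; weak lower semicontinuity in the dissipation identity places $u$ in $\bigcap_{T < T_+}\calV^T$ with the energy inequality.

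Next I would set $T_+$ to be the supremum of times up to which the solution remains regular with no energy concentration. If $T_+ < \infty$, the $\e$-regularity estimate applied near $t = T_+$ shows that the only obstruction is concentration of at least $\e_1$ of energy at finitely many points $\{x_\ell\}_{\ell=1}^L$ with $L \le E(u_0)/\e_1$; away from these, local smooth bounds persist up to $T_+$. Since $E(u(t))$ is bounded and monotone, $u(t)$ is bounded in $\dot H^1$, so $u(t) \rightharpoonup u^*$ weakly in $H^1$ along any sequence, and the local regularity upgrades this to strong $H^1_{\loc}$ convergence on $\R^2 \setminus \{x_\ell\}$; since the weak limit is a stationary distributional solution, a standard argument shows $u^*$ is independent of the sequence, giving the body map.

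Finally, on $(0, T_+)$ the solution is classical with $\p_t u = \calT(u)$, so $\frac{d}{dt}E(u(t)) = \int \p_t u \cdot \De u = -\int |\calT(u)|^2$; integrating yields~\eqref{eq:energy-identity} on compact subintervals of $(0, T_+)$, and the $\calV^T$ bounds extend continuity and the identity down to $t = 0$. Since $E(u(t)) \ge 0$, this gives~\eqref{eq:tension-L2}. For uniqueness within $\bigcap_{T < T_+}\calV^T$: if $u, v$ are two such solutions with data $u_0$, the difference $w = u - v$ solves a linear parabolic equation whose coefficients are bounded in terms of $\na u, \na v \in L^\infty_t L^2_x \cap L^2_t \dot H^1_x$; the Ladyzhenskaya/Gagliardo--Nirenberg inequality in two dimensions lets one absorb the quadratic-gradient terms into the dissipation, and a Gronwall estimate for $\|w(t)\|_{L^2}^2$ forces $w \equiv 0$. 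To reiterate, the crux is the uniform $\e$-regularity estimate; once it is in hand, the finiteness of the bubbling set, the body map, the energy identity, and uniqueness are all relatively routine consequences.
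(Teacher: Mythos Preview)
The paper does not give its own proof of this statement; it is quoted from Struwe~\cite{Struwe85} with the remark that the same arguments (written for maps from a closed Riemannian surface) carry over to the domain $\R^2$. Your proposal is a faithful high-level reconstruction of Struwe's original penalization/$\eps$-regularity scheme and correctly isolates the uniform small-energy parabolic regularity estimate as the crux.

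One minor correction: in the finite-time blow-up scenario the body map $u^*$ is not in general a stationary (harmonic) map --- it is simply the weak $H^1$ limit of $u(t)$ as $t \to T_+$ --- so its independence of the approaching time sequence is not a consequence of ``the weak limit being a stationary distributional solution.'' Rather, uniqueness of $u^*$ follows from the \emph{strong} $H^1_{\loc}$ convergence on $\R^2 \setminus \{x_\ell\}$, which in turn comes from the uniform local parabolic bounds there (your $\eps$-regularity estimate applied up to time $T_+$ away from the concentration points). This is a small point and does not affect the overall correctness of your outline.
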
 

\begin{rem} 
Lemma~\ref{lem:lwp} is proved by Struwe for the HMHF in the case of maps from a closed Riemannian surface $\calM$ to a compact Riemannian manifold  $\calN$; see~\cite[Theorem 4.1]{Struwe85}. The same arguments hold for $\calM= \R^2$.  
\end{rem}

\begin{lem}[Localized energy inequality]  \label{lem:Struwe}  There exists a constant $C>0$ with the following property. Let $u(t)$ be a solution to~\eqref{eq:hmhf} with initial data $u_0$ as in Lemma~\ref{lem:lwp}, on its maximal interval $I_{\max} = [0, T_+)$. Let $0 < t_1 < t_2 < T_+$. Let $R>0$, $\phi \in C^{\infty}_0(\R^2)$ satisfy $0 \le \phi(x) \le 1$ and $\abs{\na \phi} \le R^{-1}$. Then, 
\EQ{ \label{eq:Struwe-forward} 
\int_{\R^2} | \na u(t_2, x)|^2 \phi(x)^2 \, \ud x \le  \int_{\R^2} | \na u(t_1, x)|^2 \phi(x)^2 \, \ud x + C E( u_0) \frac{t_2 - t_1}{R^2} 
}
and, 
\EQ{ \label{eq:Struwe-back} 
\int_{\R^2} | \na u(t_2, x)|^2 \phi(x)^2 \, \ud x  \ge \int_{\R^2} | \na u(t_1, x)|^2 \phi(x)^2 \, \ud x - C \Big(E( u_0) \frac{(t_2 - t_1)}{R^2}  + | E( u(t_1)) - E(u(t_2))|\Big)
}
\end{lem}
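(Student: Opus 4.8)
\textbf{Proof proposal for the localized energy inequality (Lemma~\ref{lem:Struwe}).}

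The plan is to start from the equation $\p_t u = \calT(u) = \Delta u + u|\nabla u|^2$ and derive a differential identity for the localized Dirichlet energy $e_\phi(t) := \frac12 \int_{\R^2} |\nabla u(t,x)|^2 \phi(x)^2 \, \ud x$. Differentiating in $t$ and integrating by parts,
\EQ{
\dd{t} e_\phi(t) = \int_{\R^2} \nabla u \cdot \nabla \p_t u \, \phi^2 \, \ud x = -\int_{\R^2} \Delta u \cdot \p_t u \, \phi^2 \, \ud x - 2\int_{\R^2} (\nabla u \cdot \p_t u) \, \phi \nabla \phi \, \ud x .
}
Using that $\p_t u = \Delta u + u|\nabla u|^2$ and that $u \cdot \p_t u = 0$ (since $|u|^2 \equiv 1$), the first term becomes $-\int |\p_t u|^2 \phi^2 \, \ud x$ plus a term involving $\int (u|\nabla u|^2)\cdot \p_t u\,\phi^2 = 0$. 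Thus
\EQ{
\dd{t} e_\phi(t) = -\int_{\R^2} |\p_t u|^2 \phi^2 \, \ud x - 2\int_{\R^2} (\nabla u \cdot \p_t u)\, \phi \, \nabla\phi \, \ud x .
}
For the cross term I apply Cauchy--Schwarz and Young's inequality: $|2(\nabla u \cdot \p_t u)\phi \nabla\phi| \le |\p_t u|^2 \phi^2 + |\nabla u|^2 |\nabla\phi|^2$. This absorbs the cross term into the good (negative) term $-\int|\p_t u|^2\phi^2$ at the cost of $+\int |\nabla u|^2 |\nabla\phi|^2 \, \ud x \le R^{-2}\int |\nabla u|^2 \, \ud x \le 2 R^{-2} E(u(t)) \le 2 R^{-2} E(u_0)$, using $|\nabla\phi|\le R^{-1}$ and the monotonicity of energy from~\eqref{eq:energy-identity}. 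Integrating from $t_1$ to $t_2$ yields
\EQ{
e_\phi(t_2) \le e_\phi(t_1) - \int_{t_1}^{t_2}\!\!\int_{\R^2} |\p_t u|^2 \phi^2 \, \ud x \, \ud t + 2 E(u_0)\frac{t_2-t_1}{R^2} \le e_\phi(t_1) + 2E(u_0)\frac{t_2-t_1}{R^2},
}
which is~\eqref{eq:Struwe-forward} with $C = 4$ (accounting for the factor $\tfrac12$ in $e_\phi$). For the reverse inequality~\eqref{eq:Struwe-back}, I instead retain the good term and bound it: from the same identity, $e_\phi(t_1) - e_\phi(t_2) \le \int_{t_1}^{t_2}\!\int |\p_t u|^2\phi^2 + 2E(u_0)\frac{t_2-t_1}{R^2}$, and since $0\le\phi\le1$ and $|\p_t u|^2 = |\calT(u)|^2$, the integral $\int_{t_1}^{t_2}\int |\p_t u|^2\phi^2 \le \int_{t_1}^{t_2}\|\calT(u(t))\|_{L^2}^2\,\ud t = E(u(t_1)) - E(u(t_2))$ by the energy identity~\eqref{eq:energy-identity}. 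This gives~\eqref{eq:Struwe-back}.

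The only genuine subtlety is the justification of the formal computation: one needs enough regularity and decay of $u(t)$ to differentiate under the integral sign and to integrate by parts with the compactly supported cutoff $\phi$. This is not a real obstacle here because Theorem~\ref{lem:lwp} guarantees $u \in \calV^T$ for every $T < T_+$ with $u$ regular (e.g.\ $C^2$) on $(0, T_+)$, and $\phi \in C_0^\infty$ makes all spatial integrals over a fixed compact set; the time regularity $\int_\tau^T \|\p_t u\|_{L^2}^2 + \|\nabla^2 u\|_{L^2}^2 \, \ud t < \infty$ together with interior regularity suffices to run the argument on $[t_1,t_2] \subset (0,T_+)$, first for, say, $t_1$ replaced by $t_1 + \e$ and then passing $\e \to 0$ using continuity of $e_\phi$. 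One should also note $\int |\nabla\phi|^2|\nabla u|^2 \le R^{-2}\int_{\supp \nabla\phi}|\nabla u|^2$, which is what makes the error term scale like $R^{-2}$; using the cruder bound $\int|\nabla u|^2 \le 2E(u_0)$ is enough for the stated inequalities. I expect the bookkeeping of constants and the limiting argument near $t=0$ to be the most tedious—but entirely routine—part.
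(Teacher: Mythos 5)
Your proposal is correct and follows essentially the same route as the paper: multiply the equation by $\partial_t u\,\phi^2$ (equivalently, differentiate $e_\phi(t)$), integrate by parts to obtain the localized energy identity, bound the cross term using $|\nabla\phi|\le R^{-1}$ and the uniform energy bound, and for the backward inequality invoke the global energy identity~\eqref{eq:energy-identity} to control $\int_{t_1}^{t_2}\|\partial_t u\|_{L^2}^2\,\ud t$ by $E(u(t_1))-E(u(t_2))$. The only (immaterial) difference is bookkeeping: you absorb the cross term pointwise via Young's inequality, while the paper first applies Cauchy--Schwarz in space and then in time before Young's inequality; both yield the stated estimates up to the constant $C$.
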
 

\begin{proof}[Proof of Lemma~\ref{lem:Struwe}] 
Take the dot product of the equation~\eqref{eq:hmhf} with $\p_t u \phi^2$ and integrate by parts to obtain the identity, 
\EQ{ 
\| \p_t u(t)  \phi \|_{L^2}^2 + \frac{1}{2}  \frac{\ud}{\ud t}  \| \na u(t) \phi \|_{L^2}^2  = - \sum_{j =1}^2 \int_{\R^2}   \p_j u(t, x)  \cdot \p_t u(t, x)  \p_j \phi(x) \phi(x) \, \ud x. 
}
Integrating the above from $t_1$ to $t_2$ we obtain the identity, 
\EQ{ \label{eq:loc-en-id} 
\int_{t_1}^{t_2} \| \p_t u(t)  \phi \|_{L^2}^2 \, \ud t + \frac{1}{2}  \| \na u(t_2) \phi \|_{L^2}^2 &=  \frac{1}{2}  \| \na u(t_1) \phi \|_{L^2}^2 \\
&\quad - \int_{t_1}^{t_2}\sum_{j =1}^2 \int_{\R^2}   \p_j u(t, x)  \cdot \p_t u(t, x)  \p_j \phi(x) \phi(x) \, \ud x, \ud t 
}
The right-hand side above is bounded by 
\EQ{
\int_{t_1}^{t_2} \Big| \sum_{j =1}^2 \int_{\R^2}   \p_j u(t, x)  \cdot \p_t u(t, x)  \p_j \phi(x) \phi(x) \, \ud x \, \ud t  \Big|  \lesssim  \int_{t_1}^{t_2}\frac{ \sqrt{E( u_0)}}{R}  \| \p_t u(t)  \phi \|_{L^2}\, \ud t 
}
The lemma readily follows after an application of Cauchy Schwarz, where we note that in obtaining~\eqref{eq:Struwe-back} we also make use of the energy identity~\eqref{eq:energy-identity}. 
\end{proof}

\begin{lem} \label{lem:Struwe-lwp}  
Let $u_n(t)$ be a sequence of HMHFs with initial data $u_{n, 0} \in \E$ defined on time intervals $I_n :=[0, \tau_n]$ for a sequence $\tau_n >0$ with $\lim_{ n \to \infty} \tau_n  = 0$, and satisfying $\limsup_{n \to \infty}E( u_{n, 0}) < \infty$. Let $\om$ be a harmonic map and let $R_n>0$ be a sequence such that $\lim_{n \to \infty} \tau_n R_n^{-2} =0$. Suppose that, 
\EQ{
\lim_{n \to \infty} E(u_{n, 0} - \om; D(0, 2R_n))  = 0. 
}
Then, 
\EQ{ \label{eq:en-discR} 
\lim_{n  \to \infty} E( u_n(\tau_n) - \om; D(0,R_n))= 0. 
}
Next, let $\eps_n>0$ be a sequence with $ \eps_n < R_n$ for all $n$ and such that $\lim_{n \to \infty} \tau_n\eps_n^{-2} = 0$. Let $L \ge 1$ be an integer and let $\{x_\ell\}_{\ell =1}^L \subset \R^2$ be such that the discs $D(x_\ell,  \eps_n)$ are disjoint and satisfy $D(x_{\ell},  \eps_n) \subset D(0, R_n)$ for each $n$ and each $\ell \in \{1, \dots, L\}$.  Moreover,  $|x_\ell-x_m|\ge 100\eps_n$ if $\ell\ne m$. Suppose that, 
\EQ{
&\lim_{n \to \infty} E\Big(u_{n, 0} - \om; D(0, 2R_n) \setminus \bigcup_{\ell =1}^L D(x_{\ell} , 2^{-1}\eps_n)\Big)  = 0. 
}
Then, 
\EQ{ \label{eq:en-annulus} 
\lim_{n \to \infty} E\Big(u_{n}(\tau_n) - \om; D(0, R_n) \setminus  \bigcup_{\ell =1}^L D(x_{\ell}, \eps_n)\Big)  = 0. 
}
\end{lem}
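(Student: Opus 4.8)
\textbf{Proof proposal for Lemma~\ref{lem:Struwe-lwp}.}

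The plan is to use the localized energy inequality of Lemma~\ref{lem:Struwe} together with the energy identity to convert the hypothesis that $u_{n,0}-\om$ has small energy on a large disc into the conclusion that $u_n(\tau_n)-\om$ has small energy on a slightly smaller disc, exploiting that $\tau_n/R_n^2\to 0$ kills the error term. The subtlety is that $\om$ is harmonic, hence a stationary solution, so one cannot directly compare $u_n$ with $\om$ as two flows on the same footing; instead I would track the three quantities $E(u_n(t);\Omega)$, $E(\om;\Omega)$, and the cross term $\int_\Omega \na u_n(t)\cdot\na\om$ separately on suitable regions $\Omega$, each with a cutoff $\phi$, and show the cross term converges to $E(\om;\Omega)$ while $E(u_n(t);\Omega)\to E(\om;\Omega)$ as well.

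\medskip
\noindent\textbf{First part (estimate \eqref{eq:en-discR}).} First I would fix a cutoff $\phi_n\in C_0^\infty$ with $\phi_n\equiv 1$ on $D(0,R_n)$, $\supp\phi_n\subset D(0,2R_n)$, $0\le\phi_n\le1$, and $|\na\phi_n|\lesssim R_n^{-1}$. Expanding, for any $t$,
\EQ{
\int |\na(u_n(t)-\om)|^2\phi_n^2 = \int|\na u_n(t)|^2\phi_n^2 + \int|\na\om|^2\phi_n^2 - 2\int \na u_n(t)\cdot\na\om\,\phi_n^2.
}
The middle term is $n$-independent in structure and the hypothesis at $t=0$ says the left side is $o_n(1)$, hence $\int|\na u_{n,0}|^2\phi_n^2 = 2\int\na u_{n,0}\cdot\na\om\,\phi_n^2 - \int|\na\om|^2\phi_n^2 + o_n(1)$. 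Applying \eqref{eq:Struwe-forward} to $u_n$ on $[0,\tau_n]$ with this $\phi_n$ gives $\int|\na u_n(\tau_n)|^2\phi_n^2 \le \int|\na u_{n,0}|^2\phi_n^2 + CE(u_{n,0})\tau_nR_n^{-2} = \int|\na u_{n,0}|^2\phi_n^2 + o_n(1)$, using $\sup_nE(u_{n,0})<\infty$ and $\tau_nR_n^{-2}\to0$. So $\limsup_n E(u_n(\tau_n);D(0,R_n))$ is controlled; to turn this into convergence of $u_n(\tau_n)-\om$, I would also need the matching lower bound and control of the cross term. The cross term $\int\na u_n(\tau_n)\cdot\na\om\,\phi_n^2$ is handled by noting that $\om$ is harmonic: integrating by parts moves $\De$ onto $\om$ (up to a boundary/cutoff commutator of size $\|\na\om\|_{L^2(\supp\na\phi_n)}\to0$ by Lemma~\ref{lem:decay} since $R_n\to\infty$ forces... ) — actually the cleaner route is to observe that $u_n(\tau_n)\to\om$ strongly in $H^1_{\loc}$ away from finitely many points would be circular, so instead I would argue directly: since $u_n(t)$ solves \eqref{eq:hmhf} and $\om$ solves \eqref{eq:EL}, the difference $w_n=u_n-\om$ satisfies $\p_t w_n = \De w_n + (u_n|\na u_n|^2 - \om|\na\om|^2)$, test against $w_n\phi_n^2$, and run a Gronwall estimate on $\|w_n(t)\phi_n\|_{L^2}^2 + \int_0^t\|\na w_n\phi_n\|_{L^2}^2$; the nonlinear terms are quadratic/cubic in $\na u_n,\na\om$ and on $\supp\phi_n$ one has small energy for $w_n$ at $t=0$ which then... but small $L^2$-norm of $w_n$ is not assumed, only small $\dot H^1$. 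The correct mechanism is therefore the one the authors almost certainly intend: combine the \emph{forward} inequality \eqref{eq:Struwe-forward} for $E(u_n;\cdot)$, the \emph{backward} inequality \eqref{eq:Struwe-back}, and the fact that by the energy identity \eqref{eq:energy-identity} the total energy drop $E(u_{n,0})-E(u_n(\tau_n))\to0$ (since $\int_0^{\tau_n}\|\calT\|_{L^2}^2\to0$ as $\tau_n\to0$ and the integrand is integrable on $[0,T_+)$ — here one needs $\tau_n\to0$ and that all $u_n$ extend past $\tau_n$; more precisely $E(u_{n,0})-E(u_n(\tau_n))=\int_0^{\tau_n}\|\calT(u_n)\|_{L^2}^2$, and this is $o_n(1)$ because... this requires a uniform-in-$n$ bound, which is \emph{not} automatic). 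I would instead get the energy-drop smallness from Struwe's $\eps$-regularity local energy bounds, which give that on a short time interval $[0,\tau_n]$ with $\tau_nR_n^{-2}\to0$ no energy escapes to spatial infinity and the energy is essentially conserved locally — this is exactly the content one extracts by combining \eqref{eq:Struwe-forward} and \eqref{eq:Struwe-back} applied to a family of cutoffs. With both one-sided bounds in hand, $E(u_n(\tau_n);D(0,R_n)) = E(u_{n,0};D(0,R_n)) + o_n(1) = E(\om) + o_n(1)$, and similarly on annuli the energy of $u_n(\tau_n)$ matches that of $u_{n,0}$, hence is $o_n(1)$ where $u_{n,0}-\om$ was small. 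Finally, convergence of energies plus weak convergence $u_n(\tau_n)\rightharpoonup\om$ on $D(0,R_n')$ (for fixed $R_n'$, which follows from the local regularity theory / Rellich and the energy bound) upgrades to strong $\dot H^1$ convergence, giving \eqref{eq:en-discR}.

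\medskip
\noindent\textbf{Second part (estimate \eqref{eq:en-annulus}).} This is the same argument localized to the annular region $D(0,R_n)\setminus\bigcup_\ell D(x_\ell,\eps_n)$. I would build a cutoff $\psi_n$ equal to $1$ on this set, supported in $D(0,2R_n)\setminus\bigcup_\ell D(x_\ell,2^{-1}\eps_n)$, with $|\na\psi_n|\lesssim R_n^{-1}$ near the outer boundary and $|\na\psi_n|\lesssim\eps_n^{-1}$ near the inner boundaries. Then \eqref{eq:Struwe-forward} and \eqref{eq:Struwe-back} applied with $\psi_n$ produce error terms $CE(u_{n,0})\tau_n\eps_n^{-2}$ from the inner cutoff scale, which is $o_n(1)$ precisely because of the hypothesis $\tau_n\eps_n^{-2}\to0$ (the $|x_\ell-x_m|\ge100\eps_n$ and disjointness conditions guarantee the cutoffs at different $x_\ell$ don't interfere, so only the single worst scale $\eps_n$ enters). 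The hypothesis at $t=0$ gives $E(u_{n,0};\supp\psi_n\cap\{\psi_n=1\}) = o_n(1)$ once we subtract the harmonic map, and combined with the local energy-drop smallness we conclude $E(u_n(\tau_n)-\om;\,D(0,R_n)\setminus\bigcup_\ell D(x_\ell,\eps_n)) = o_n(1)$.

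\medskip
\noindent\textbf{Main obstacle.} The crux is controlling the difference between $E(u_n(t);\Omega_n)$ at $t=\tau_n$ versus $t=0$ on the relevant (shrinking inner scale) regions $\Omega_n$, i.e.\ certifying that energy neither escapes nor concentrates on these regions over the short time $\tau_n$; the two one-sided inequalities of Lemma~\ref{lem:Struwe} are designed exactly for this, but applying them cleanly requires choosing the cutoffs so that all error terms are measured at scales $R_n$ or $\eps_n$ with $\tau_nR_n^{-2},\tau_n\eps_n^{-2}\to0$, and then separately arguing that the global energy loss $\int_0^{\tau_n}\|\calT(u_n)\|_{L^2}^2$ — which appears in \eqref{eq:Struwe-back} — is $o_n(1)$. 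The latter is where care is needed: it does \emph{not} follow from a single flow's energy identity uniformly in $n$, so one must invoke Struwe's construction, under the uniform bound $\limsup_nE(u_{n,0})<\infty$, to see that on the short interval $[0,\tau_n]$ the solutions $u_n$ have uniformly small local energy oscillation; alternatively one compares the difference $u_n-\om$ directly via a localized parabolic energy estimate (testing $\p_t(u_n-\om)=\De(u_n-\om)+\text{(nonlinearity)}$ against $(u_n-\om)\psi_n^2$) and absorbs the cubic nonlinear terms using the small-energy hypothesis together with an $\eps$-regularity $L^\infty$ bound on $\na u_n$ away from the bad discs. I expect the bookkeeping for the annular version, keeping the inner-scale error terms $\lesssim\tau_n\eps_n^{-2}$ and verifying the separated discs do not generate cross terms, to be the most laborious step, though not conceptually difficult.
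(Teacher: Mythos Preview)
Your proposal correctly diagnoses the obstacles in each of the routes you sketch, but you never land on the mechanism that actually closes the argument, and none of your alternatives works as stated. The paper's proof tests the difference equation for $v_n := u_n - \omega$ against $\partial_t v_n\,\phi_n^2$, \emph{not} against $v_n\,\phi_n^2$. The point is that $\partial_t v_n = \partial_t u_n$ is pointwise orthogonal to $u_n$ (since $|u_n|^2 \equiv 1$), so the dangerous nonlinear term $u_n|\nabla u_n|^2$ satisfies
\[
\int (u_n|\nabla u_n|^2)\cdot \partial_t v_n\,\phi_n^2 = \int |\nabla u_n|^2\,(u_n\cdot\partial_t u_n)\,\phi_n^2 = 0
\]
and disappears entirely. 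What survives is only $-\int \omega|\nabla\omega|^2\cdot\partial_t v_n\,\phi_n^2$, which is harmless: Cauchy--Schwarz bounds it by $\tau_n\|\nabla\omega\|_{L^4}^4 + \tfrac12\int_0^{\tau_n}\|\partial_t v_n\,\phi_n\|_{L^2}^2$, and the latter is absorbed on the left. The commutator term $\int\nabla v_n\cdot\partial_t v_n\,\nabla\phi_n\,\phi_n$ is controlled by $(E(u_{n,0})+E(\omega))\tau_n\eps_n^{-2}$ exactly as in Lemma~\ref{lem:Struwe}. This gives $\|\nabla v_n(\tau_n)\phi_n\|_{L^2}^2 \le \|\nabla v_n(0)\phi_n\|_{L^2}^2 + o_n(1)$ directly, with no need to control the global energy drop $\int_0^{\tau_n}\|\calT(u_n)\|_{L^2}^2$ at all.

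By contrast, your Route A (forward/backward inequalities of Lemma~\ref{lem:Struwe}) genuinely requires the global quantity $|E(u_n(0))-E(u_n(\tau_n))|$ to be $o_n(1)$, and as you correctly note this is not provided by the hypotheses --- the $u_n$ are an arbitrary sequence of flows, and energy could be dissipated at scales inside the excised discs $D(x_\ell,\eps_n/2)$ at an $n$-dependent rate. Your Route B (testing against $v_n\phi_n^2$) fails for the reason you also identify: no $L^2$ smallness of $v_n$ is assumed, and the nonlinear term $u_n|\nabla u_n|^2\cdot v_n$ does not vanish and cannot be absorbed without an $L^\infty$ bound on $\nabla u_n$ that you do not have (and which would be circular to invoke). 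The orthogonality trick with $\partial_t v_n$ as multiplier is the missing idea.
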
 

%
\begin{proof}[Proof of Lemma~\ref{lem:Struwe-lwp}] 
The proof is very similar to the Proof of Lemma~\ref{lem:Struwe}. We prove the estimate~\eqref{eq:en-annulus} as the proof of~\eqref{eq:en-discR} is analogous.  Set  $v_n(t):= u_n(t) - \om$. Then, 
\EQ{
\p_t v_n - \De v_n = u_n | \na u_n|^2 - \om| \na \om|^2
} 
Let $\phi_n \in C^\infty_0( \R^2)$ and take the dot product of the above with $\p_t v_n  \phi_n^2$. Recalling that $\p_tv_n = \p_t u_n \perp u_n$, integrating by parts, and integrating in time from $0$ to $\tau_n$,  we obtain the inequality, 
\EQ{ \label{eq:en-diff-ineq} 
\frac{1}{2} \| \na v_n(\tau_n)  \phi_n \|_{L^2}^2  &+ \int_0^{\tau_n}  \| \p_t v_n (t) \phi_n \|_{L^2}^2 \, \ud t  \le \frac{1}{2} \| \na u_{n, 0} \phi_n \|_{L^2}^2\\
&+ \int_0^{\tau_n} \int_{\R^2}  | \na v_n(t)| | \p_t v_n(t) | |\na \phi_n| \phi_n\, \ud x \, \ud t +  \int_{0}^{\tau_n} \int_{\R^2} | \na \om|^2  | \p_t v_n| \phi_n^2 \, \ud x \, \ud t. 
}
Now, let $\phi_n$ be cutoffs supported in the region $D(0, 2R_n) \setminus\bigcup_{\ell =1}^L D(x_{\ell} , 2^{-1}\eps_n))$ and $= 1$ in the region $D(0, R_n) \setminus \bigcup_{\ell =1}^L D(x_{\ell} , 2^{-1}\eps_n))$, satisfying the bound $| \na \phi_n| \lesssim \eps_n^{-1}$. The first term of the last line above satisfies, 
\EQ{
\int_0^{\tau_n} \int_{\R^2}  | \na v_n(t)| | \p_t v_n(t) | |\na \phi_n| \phi_n\, \ud x \, \ud t  \lesssim  ( E( u_{n, 0}) + E( \om)) \frac{ \tau_n}{\eps_n^2}  + \frac{1}{2} \int_0^{\tau_n}  \| \p_t v_n (t) \phi_n \|_{L^2}^2 \, \ud t
} 
and the second term on the right above can be absorbed into the left-hand side of~\eqref{eq:en-diff-ineq}. Similarly, 
\EQ{
 \int_{0}^{\tau_n} \int_{\R^2} | \na \om|^2  | \p_t v_n| \phi_n^2 \, \ud x \, \ud t \lesssim   \tau_n  \| \na \om \|_{L^4}^4 + \frac{1}{2} \int_0^{\tau_n}  \| \p_t v_n (t) \phi_n \|_{L^2}^2 \, \ud t
}
and the second term on the right above can be absorbed into the left-hand side of~\eqref{eq:en-diff-ineq}. The limit~\eqref{eq:en-annulus} readily follows. 
\end{proof} 
%

\subsubsection{Local $L^\infty$ estimates for the heat flow} 

We use the notation $e^{t \De}$ to denote the heat propagator in $\R^2$, i.e., 
\EQ{
e^{t \De}   v (x):= \frac{1}{4 \pi t} \int_{\R^2} e^{-\frac{|x-y|^2}{4t}} v(y) \, \ud y,  
}
where here $v: \R^2 \to \R^3$.   We also recall Duhamel's formula, 
\EQ{
v(t) = e^{t \De} v(0) + \int_0^t e^{(t-s) \De} ( \p_s v(s) - \De v(s)) \, \ud s.
}

\begin{lem}[Parabolic Strichartz estimates]  \emph{\cite[Lemma 2.5]{Tao4}} There exists a constant $C_0>0$ with the following property.  Let $v_0 \in L^2(\R^2; \R^3)$. Let $T>0$,  $I:= [0, T]$  and let $F \in L^1([0, T]; L^2(\R^2; \R^3))$. Let $v(t)$ denote the unique solution to the linear heat equation
\EQ{
\p_t v - \De v &= F \\ v(0) &= v_0. 
}
Then, 
\EQ{\label{eq:StHeat}
\| v \|_{L^2(I; L^\infty(\R^2; \R^3))} \le C_0 \big( \| v_0 \|_{L^2} + \| F \|_{L^1(I; L^2(\R^2; \R^3))} \big) 
}
\end{lem}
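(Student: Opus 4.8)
The statement we must prove is the endpoint-type parabolic Strichartz bound
\[
\| v \|_{L^2(I; L^\infty(\R^2; \R^3))} \le C_0 \big( \| v_0 \|_{L^2} + \| F \|_{L^1(I; L^2(\R^2; \R^3))} \big),
\]
for the solution of the inhomogeneous linear heat equation on $\R^2$. Since this is a standard estimate quoted from Tao's paper, the plan is to give the short self-contained argument via the $TT^*$ method together with explicit kernel bounds for the heat semigroup. I would first split $v = e^{t\Delta} v_0 + \int_0^t e^{(t-s)\Delta} F(s)\,\ud s$ by Duhamel, so that it suffices to treat the homogeneous piece and the Duhamel piece separately.

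For the homogeneous piece, the claim is $\| e^{t\Delta} v_0 \|_{L^2_t L^\infty_x} \lesssim \| v_0\|_{L^2_x}$. I would prove this by duality/$TT^*$: it is equivalent to boundedness of $T^* T$ where $T v_0 = e^{t\Delta}v_0$, i.e. to the bound
\[
\Big\| \int_0^\infty e^{(t+s)\Delta} G(s)\,\ud s \Big\|_{L^2_t L^\infty_x} \lesssim \| G \|_{L^2_t L^1_x}.
\]
The key input is the fixed-time dispersive bound $\| e^{\tau \Delta} g\|_{L^\infty_x} \le (4\pi \tau)^{-1} \| g\|_{L^1_x}$, which follows immediately from the explicit Gaussian kernel; combined with $\| e^{\tau\Delta} g\|_{L^\infty} \le \|g\|_{L^\infty}$ and interpolation one gets $L^1 \to L^\infty$ decay like $\tau^{-1}$ in dimension $2$. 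Then the operator with kernel $(t+s)^{-1}$ acting on $L^2(0,\infty)$ is bounded — this is exactly the Hardy–Hilbert inequality (the kernel $1/(t+s)$ is homogeneous of degree $-1$), which gives the required $L^2_t L^1_x \to L^2_t L^\infty_x$ bound after applying Minkowski's integral inequality to move the $x$-norm inside. Tracking constants, one sees the scale-invariant constant here is finite (it is $\pi$ for the sharp Hilbert inequality, times the $(4\pi)^{-1}$ from the kernel), which is why dimension $2$ is exactly the endpoint case where no time-interval length enters.

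For the Duhamel piece $w(t) = \int_0^t e^{(t-s)\Delta} F(s)\,\ud s$, I would bound it directly by Minkowski's integral inequality and the dispersive estimate:
\[
\| w(t) \|_{L^\infty_x} \le \int_0^t \| e^{(t-s)\Delta} F(s)\|_{L^\infty_x}\,\ud s,
\]
and then $\| w \|_{L^2_t L^\infty_x} \le \big\| \int_0^t \|e^{(t-s)\Delta}F(s)\|_{L^\infty_x}\,\ud s \big\|_{L^2_t}$. Writing $\|e^{(t-s)\Delta}F(s)\|_{L^\infty_x} \le \|e^{(t-s)\Delta/2}\|_{L^2\to L^\infty}\,\|e^{(t-s)\Delta/2}F(s)\|_{L^2_x} \lesssim (t-s)^{-1/2}\|F(s)\|_{L^2_x}$, one would like Young's convolution inequality with the kernel $(t-s)^{-1/2}\mathbf{1}_{\{s<t\}}$; but that kernel is not integrable, so instead the clean route is to use the homogeneous estimate just proved together with the Christ–Kiselev lemma, or more elementarily to pair $w$ against a test function $g \in L^2_t L^1_x$ and reduce to the retarded version of the $TT^*$ kernel bound — which again is controlled by the Hardy–Hilbert inequality restricted to $\{s<t\}$. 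Either way the same scale-invariant Hilbert-type inequality is the engine. The main obstacle — really the only subtle point — is handling the retarded (time-ordered) nature of the Duhamel integral at the endpoint exponent pair without losing a logarithm or a factor of $|I|$; the Christ–Kiselev lemma does not apply at a genuine endpoint, so one must instead exploit the positivity of the heat kernel and the fact that $e^{(t-s)\Delta}$ is a contraction on every $L^p$, which lets one directly estimate the bilinear form $\int\int_{s<t} \langle e^{(t-s)\Delta} F(s), g(t)\rangle \,\ud s\,\ud t$ by the full (non-retarded) Hilbert kernel bound, since restricting the domain of integration only decreases the absolute value after inserting absolute values inside. I would close by collecting the two pieces and taking $C_0$ to be the sum of the two constants obtained.
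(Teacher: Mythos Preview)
Your treatment of the homogeneous term via $TT^*$ and the Hardy--Hilbert inequality is correct and is exactly what the paper does.

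For the Duhamel piece you are working too hard, and the discussion conflates two different inhomogeneous estimates. The estimate actually needed is $L^1_t L^2_x \to L^2_t L^\infty_x$, not the retarded $TT^*$ bound $L^2_t L^1_x \to L^2_t L^\infty_x$; the latter is the genuine endpoint retarded estimate where Christ--Kiselev breaks down and one might reach for positivity or Hilbert-kernel tricks, but it is not what the lemma asks for. Because the forcing norm carries $L^1$ in time, the inhomogeneous estimate follows immediately from the homogeneous one by Minkowski: writing $v(t) = \int_0^\infty \chi_{[s\le t]}\, e^{(t-s)\Delta}F(s)\,\ud s$ and taking the $L^2_t L^\infty_x$ norm inside the $s$-integral,
\[
\|v\|_{L^2_t L^\infty_x} \le \int_0^\infty \big\|e^{(\cdot - s)\Delta}F(s)\big\|_{L^2((s,\infty); L^\infty_x)}\,\ud s \lesssim \int_0^\infty \|F(s)\|_{L^2_x}\,\ud s,
\]
the last step being the homogeneous estimate applied with initial data $F(s)$ at time $s$. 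This is precisely the paper's argument, and it sidesteps every obstacle you raise (non-integrability of $(t-s)^{-1/2}$, failure of Christ--Kiselev, need for positivity). Your bilinear pairing could be made to work --- pair with $g \in L^2_t L^1_x$, move $e^{(t-s)\Delta}$ onto $g$, and invoke the dual homogeneous estimate $\|\int_s^T e^{(t-s)\Delta}g(t)\,\ud t\|_{L^2_x} \lesssim \|g\|_{L^2_t L^1_x}$ --- but that is just the Minkowski argument in dual form and does not involve the Hardy--Hilbert kernel at all.
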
 
\begin{proof}
Setting $(Tf)(t):=e^{t\Delta}f$ for $t\ge0$ one has $T^* F=\int_0^\infty e^{s\Delta} F(s)\, ds$. 
    Starting from the two-dimension estimate $\|(Tf)(t)\|_\infty \les t^{-1} \|f\|_1$, we have 
    \[
    (T T^* F)(t) =\int_0^\infty e^{(t+s)\Delta}F(s)\, ds
    \]
    whence 
    \[
    \|(T T^* F)(t)\|_\infty \les \int_0^\infty (t+s)^{-1} \|F(s)\|_1\, ds = \int_0^\infty (1+u)^{-1} \|F(u t)\|_1\, du
    \]
    The right-hand side is  $L^2_t((0,\infty))$ bounded and we conclude that 
    \[
    \| T T^* F\|_{L^2((0,\infty),L^\infty(\R^2))} \les \|F\|_{L^2((0,\infty),L^1(\R^2))}
    \]
    Thus,  $\langle TT^*F,F\rangle=\|T^*F\|_2^2\les \|F\|_{L^2((0,\infty),L^1(\R^2))}^2$, and by duality, we obtain the $F=0$ case of~\eqref{eq:StHeat}, viz. $\|Tv_0\|_{L^2((0,\infty),L^\infty(\R^2))}\les \|v_0\|_2$. On the other hand, if $v_0=0$, then 
    \[
    v(t) = \int_0^t e^{(t-s)\Delta} F(s)\, ds = \int_0^\infty \chi_{[s\le t]}\, e^{(t-s)\Delta}  F(s)\, ds
    \]
    whence 
    \EQ{
   \| v(t) \|_{L^2((0,\infty),L^\infty(\R^2))} &\le \int_0^\infty \| e^{(t-s)\Delta}  F(s)\|_{L^2((s,\infty),L^\infty(\R^2))}\, ds
   \les \int_0^\infty \|  F(s)\|_{L^2(\R^2)}\, ds
    }
    as claimed. 
\end{proof}

\begin{lem} \label{lem:sup} Let $u_n(t)$ be a sequence of solutions to~\eqref{eq:hmhf} with initial data $u_{n, 0} \in \E \cap C^0( \R^2; \R^3)$ and  $\limsup_{n \to \infty} E( u_{n, 0})  < \infty$, defined on time intervals $I_n :=[0, \tau_n]$ for a sequence $\tau_n >0$ with $\lim_{n \to \infty} \tau_n = 0$.  Let $\om$ be a harmonic map (possibly constant) and let $R_n>0$ be a sequence so that $\lim_{n \to \infty} \tau_n R_n^{-2} = 0$. Suppose that, 
\EQ{
\lim_{n \to \infty} \Big(\| u_{n, 0} - \om \|_{L^{\infty}(D(0, 4R_n))} +E(u_{n, 0} - \om; D(0, 4R_n))\Big)  = 0, 
}
then, 
\EQ{
\lim_{n \to \infty} \| u_{n}(\tau_n) - \om \|_{L^\infty(D(0, R_n))} = 0.
}
Next, let $\eps_n>0$ be a sequence with $\eps_n < R_n$ for all $n$ and such that $\lim_{n \to \infty} \tau_n\eps_n^{-2} = 0$. Let $L \ge 1$ be an integer and let $\{x_\ell\}_{\ell =1}^L \subset \R^2$ be such that the discs $D(x_\ell,  \eps_n)$ are disjoint and satisfy $D(x_{\ell},  \eps_n) \subset D(0, R_n)$ for each $n$ and each $\ell \in \{1, \dots, L\}$.  Moreover,  $|x_\ell-x_m|\ge 100\eps_n$ if $\ell\ne m$.  Suppose that, 
\EQ{
&\lim_{n \to \infty} \Big( \| u_{n, 0} - \om \|_{L^\infty(D(0, 4R_n) \setminus \bigcup_{\ell =1}^L D(x_{\ell} , 4^{-1}\eps_n))}+E\Big(u_{n, 0} - \om; D(0, 4R_n) \setminus \bigcup_{\ell =1}^L D(x_{\ell} , 4^{-1}\eps_n)\Big)  \Big) = 0. 
}
Then, 
\EQ{ \label{eq:sup-cheese} 
\lim_{n \to \infty} \| u_{n}(\tau_n) - \om \|_{L^\infty(D(0, R_n) \setminus \bigcup_{\ell =1}^L D(x_{\ell} , \eps_n))} = 0. 
}
\end{lem}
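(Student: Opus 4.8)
The plan is to combine the parabolic Strichartz estimate with the localized energy inequality (Lemma~\ref{lem:Struwe-lwp}), using a cutoff to reduce to a statement on all of $\R^2$. Set $v_n(t) := u_n(t) - \om$. The equation for $v_n$ is $\p_t v_n - \De v_n = u_n|\na u_n|^2 - \om|\na\om|^2 =: F_n$, and on the relevant region $F_n$ is essentially $v_n$ times a quadratic-in-gradient expression plus a controlled harmonic-map contribution. First I would fix a cutoff $\psi_n \in C_0^\infty(\R^2)$ with $\psi_n = 1$ on $D(0, 2R_n) \setminus \bigcup_\ell D(x_\ell, 2^{-1}\eps_n)$, supported in $D(0, 4R_n) \setminus \bigcup_\ell D(x_\ell, 4^{-1}\eps_n)$, with $|\na\psi_n| \lesssim \eps_n^{-1}$ and $|\na^2 \psi_n| \lesssim \eps_n^{-2}$, and write the equation for $w_n := \psi_n v_n$:
\EQ{
\p_t w_n - \De w_n = \psi_n F_n - 2\na\psi_n \cdot \na v_n - (\De\psi_n) v_n =: G_n, \qquad w_n(0) = \psi_n u_{n,0} - \psi_n\om.
}
By Duhamel and the Strichartz estimate~\eqref{eq:StHeat} on $I_n = [0,\tau_n]$,
\EQ{
\|w_n\|_{L^2(I_n; L^\infty)} \le C_0\big( \|w_n(0)\|_{L^2} + \|G_n\|_{L^1(I_n; L^2)}\big).
}
The initial-data term tends to $0$ because $\|u_{n,0} - \om\|_{L^\infty} \to 0$ on the support of $\psi_n$ and this support has area $\lesssim R_n^2$ — here one needs $\tau_n R_n^{-2} \to 0$ only at the end, but the $L^2$ norm of $w_n(0)$ is bounded by $\|u_{n,0}-\om\|_{L^\infty(\supp\psi_n)} \cdot |\supp\psi_n|^{1/2}$, which does go to zero once combined with the energy hypothesis; more carefully one splits $\|w_n(0)\|_{L^2}^2 \lesssim \|u_{n,0}-\om\|_{L^\infty}^2 R_n^2$, so I should instead work with a rescaled version or localize further — see the obstacle paragraph.

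For the forcing term $G_n$: the commutator pieces $\na\psi_n\cdot\na v_n$ and $(\De\psi_n)v_n$ are supported in the transition annuli where $\psi_n$ varies, i.e., near $\p D(0, 2R_n)$ and near the $\p D(x_\ell, 2^{-1}\eps_n)$; on these regions $v_n$ is small in $L^\infty$ and $\na v_n$ small in $L^2$ (by Lemma~\ref{lem:Struwe-lwp}, whose energy hypothesis is implied by ours), so their $L^1_t L^2_x$ norms are $\lesssim \tau_n^{1/2}(\eps_n^{-1} o(1) + \eps_n^{-2} R_n\, o(1))$-type quantities — bounded by $\tau_n^{1/2}\eps_n^{-1} o(1)$ using $\tau_n\eps_n^{-2}\to 0$. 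The genuinely nonlinear term $\psi_n F_n$ is controlled by writing $u_n|\na u_n|^2 - \om|\na\om|^2 = v_n|\na u_n|^2 + \om(|\na u_n|^2 - |\na\om|^2)$; the first summand is $\lesssim |v_n|\,|\na u_n|^2$ and the second $\lesssim |\na v_n|(|\na u_n| + |\na\om|)$. For the $L^1_t L^2_x$ norm of $\psi_n |v_n|\,|\na u_n|^2$, I would use $\||\na u_n(t)|^2\|_{L^1} \le 2E(u_{n,0})$ together with a bootstrap: bound it by $\|v_n\|_{L^\infty}\||\na u_n|^2\|_{L^1}^{1/2}\cdot(\sup|\na u_n|)$-type estimates is too crude, so instead the standard device is to absorb it — estimate $\|\psi_n v_n |\na u_n|^2\|_{L^2_x} \lesssim \|\psi_n v_n\|_{L^\infty_x}\||\na u_n|^2\|_{L^2_x}$ and note $\||\na u_n|^2\|_{L^2_x} = \|\na u_n\|_{L^4_x}^2$, which is controlled via $W^{2,2}$ and $\e$-compactness (Lemma~\ref{lem:eps-compact}) up to a harmless loss; then $\int_{I_n}\|w_n\|_{L^\infty}\|\na u_n\|_{L^4}^2\,\ud t$ is absorbed into the left side once $\tau_n$ is small via Hölder in time, leaving a remainder that $\to 0$. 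Finally, from $\|w_n\|_{L^2(I_n;L^\infty)}\to 0$ I extract a single time $t_n^* \in I_n$ with $\|w_n(t_n^*)\|_{L^\infty}\to 0$, and then propagate from $t_n^*$ to $\tau_n$ using~\eqref{eq:StHeat} again on the short interval $[t_n^*,\tau_n]$ (or, more simply, choose $t_n^*$ and re-run the estimate on $[t_n^*,\tau_n]$ controlling the sup at the endpoint by standard parabolic smoothing, since $|\tau_n - t_n^*| \le \tau_n \to 0$).

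\textbf{Main obstacle.} The delicate point is handling the nonlinear term $\psi_n v_n|\na u_n|^2$ and the initial-data term uniformly as $R_n \to \infty$: the region has area $\sim R_n^2$, so $L^\infty$ smallness does not automatically give $L^2$ smallness. The resolution is that the relevant estimates are \emph{scale-invariant} under $x \mapsto R_n x$, $t \mapsto R_n^2 t$: after this parabolic rescaling the time interval has length $\tau_n R_n^{-2} \to 0$, the region becomes $D(0,4)\setminus(\text{small discs of radius} \sim \eps_n/R_n)$, and the Strichartz constant $C_0$ is unchanged. One must then verify that the rescaled nonlinear forcing has $L^1_t L^2_x$ norm $\to 0$ — here $\|\na\tilde u_n(t)\|_{L^4}^2$ is not scale-invariant, but the full integral $\int \|\na u_n\|_{L^4}^2$ picks up the small factor $\tau_n^{1/2}$ in time (Hölder) times a bound on $\sup_t\|\na u_n\|_{L^4}^2$ that follows from Lemma~\ref{lem:eps-compact} applied on unit discs (using $\tau_n\eps_n^{-2}\to 0$ to control the tension contribution), so the product tends to $0$. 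Getting all the rescalings and the $\e$-compactness bookkeeping consistent — particularly near the excised discs $D(x_\ell,\eps_n)$ where the gradient need not be small — is the technical heart of the argument; this is presumably the reason the authors remark that they "did not find a reference for" this localized $L^\infty$ propagation in the literature.
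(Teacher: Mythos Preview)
Your scheme has a genuine gap at the endpoint-in-time step. The Strichartz estimate \eqref{eq:StHeat} yields only $\|w_n\|_{L^2(I_n;L^\infty)}\to 0$, and from this you cannot extract a time $t_n^*$ with $\|w_n(t_n^*)\|_{L^\infty}\to 0$ unless you know $\|w_n\|_{L^2_t L^\infty_x}=o(|I_n|^{1/2})$, which you do not. Chebyshev gives $|\{t:\|w_n(t)\|_{L^\infty}>\lambda\}|\le\lambda^{-2}\|w_n\|_{L^2_tL^\infty_x}^2$, but to find such a $t_n^*$ inside $I_n$ with $\lambda=\lambda_n\to 0$ you would need $\lambda_n^{-2}\|w_n\|_{L^2_tL^\infty_x}^2<|I_n|$, i.e., $\|w_n\|_{L^2_tL^\infty_x}/|I_n|^{1/2}\to 0$, and you have no control on this ratio. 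The subsequent ``re-run on $[t_n^*,\tau_n]$'' is then circular: to restart you need $\|w_n(t_n^*)\|_{L^2}$ small, which (since the region has area $\sim R_n^2$) again requires $\|v_n(t_n^*)\|_{L^\infty}$ small --- precisely what you are trying to prove.

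The paper closes this by working in $L^\infty_tL^\infty_x$ from the start, using the $L^\infty$-contraction property of the heat semigroup (Beurling--Deny, for the Dirichlet flow on $\overline{D(0,3R_n/2)}$) rather than Strichartz. Concretely, the paper sets $v=(u-\omega)\phi_1$ where $\phi_1$ is the Dirichlet ground state of the disk (so $v$ vanishes on the boundary and $-\Delta\phi_1=\lambda_1^2\phi_1$ turns the cutoff commutator into the absorbable term $\lambda_1^2 v$), and obtains directly $\|v\|_{L^\infty_tL^\infty_x}\lesssim o(1)+(\|v\|_{L^\infty_tL^\infty_x}+1)\|w\|_{L^2_tL^\infty_x}^2$ where now $w:=\phi_2\nabla(u-\omega)$. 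The Strichartz estimate is then applied to $w$ (the \emph{gradient}, not $v$), and $\|w\|_{L^2_tL^\infty_x}=o(1)$ is established via Struwe-type $W^{2,2}$ and $L^4$ energy estimates localized by further cutoffs $\phi_2,\phi_3$. For the punctured disk the Dirichlet ground state no longer satisfies the needed lower bound away from the holes, so the paper constructs an explicit $\phi_1$ as a product of radial Bessel-type eigenfunctions near each hole glued to a constant, arranged so that $-\Delta\phi_1=V\phi_1$ with $\|V\|_{L^\infty}\lesssim\eps_n^{-2}$ and $\phi_1\gtrsim 1$ on the region of interest. The missing idea in your outline is this two-tier structure: maximum principle for $v$ to get the endpoint, Strichartz for $\nabla v$ to feed into it.
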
 
\begin{proof}
We begin with a solution $u_n(t)$ of the heat flow satisfying
\[
\| u_n(0) -\omega\|_{L^\infty(D(0,4R_n))}+ E(u_n(0)-\omega, D(0,4R_n)) =o_n(1).
\]
We pick $\phi_1$ to be the ground state of the Dirichlet Laplacian on the disk $D(0,3R_n/2)$ and set $v(t)=(u(t)-\omega)\phi_1$, dropping the index~$n$ for simplicity. We normalize $\phi_1(0)=1$ which means that $\|\phi_1\|_\infty=1$. Then $-\Delta \phi_1=\lambda_1^2\phi_1$, $\lambda_1^2\simeq R_n^{-2}$ and 
\[
\partial_t v -\Delta v = (|\nabla u|^2 u - |\nabla\omega|^2\omega)\phi_1 -2\nabla(u-\omega)\nabla\phi_1 + \lambda_1^2 v.
\]
Since $\phi_1$ is not globally smooth, we cannot solve this heat equation on the plane but rather need to use the heat flow on the region $\Omega=\overline{D(0,3R_n/2)}$ with Dirichlet boundary conditions. By the Beurling-Deny theorem, see Davies~\cite[Theorem~1.3.5]{DHeat}, the heat flow is a contraction on  $L^\infty(\Omega)$ and we conclude that 
\EQ{\label{eq:absch1}
\max_{0\le t\le T}\|v(t)\|_\infty &\le \|v(0)\|_\infty + \int_I  \| (|\nabla u(s)|^2 u(s) - |\nabla\omega|^2\omega)\phi_1\|_\infty \,  \ud s \\
&\qquad + 2\int_I  \| \nabla (u(s) -\omega) \nabla \phi_1\|_\infty \, \ud s + \lambda_1^2 |I| \|v\|_{L^\infty(I\times\R^2) }
}
with $[0,\tau_n]=I$. 
If $\lambda_1^2 |I|\le\frac12 $, then the final term gets absorbed to the left-hand side. 
Next,
\EQ{ 
\| (|\nabla u(s)|^2 u(s) - |\nabla\omega|^2\omega)\phi_1\|_\infty & \le 2(\| \phi_2 \nabla (u(s)-\omega)\|_\infty^2 +\|\phi_2 \nabla \omega\|_\infty^2)\|v(s)\|_\infty \\
&+ \|\omega\|_\infty \|\phi_2\nabla(u-\omega)\|_\infty(\|\phi_2\nabla (u-\omega)\|_\infty + 2\|\phi_2\nabla \omega\|_2)
}
where $\phi_2$ is a smooth cutoff to $D(0,2R_n)$ with $\phi_1\phi_2=\phi_1$. We further bound 
\[
\| \nabla (u(s) -\omega) \nabla \phi_1\|_\infty \les R_n^{-1}\|  \phi_2\nabla (u(s) -\omega) \|_\infty
\]
using that $\|\nabla\phi_1\|_\infty \les R_n^{-1}$, which follows by scaling. 
Define $w(s):= \phi_2\nabla (u(s) -\omega)  $ and let $X:=L^\infty(I; L^\infty(\R^2))$, $Y:=L^2(I; L^\infty(\R^2; \R^3))$. Then~\eqref{eq:absch1} implies that 
\[
\|v\|_X \les o(1) +\tau_n + \|v\|_X (\|w\|_Y^2+\tau_n)+ \|w\|_Y^2 + \sqrt{\tau_n R_n^{-2}} \|w\|_Y,
\]
which in turn simplifies to 
\EQ{\label{eq:absch2}
\|v\|_X &\les o(1) + (\|v\|_X +1)\|w\|_Y^2 .
}
To bound $w$ we use the PDE
\EQ{
\partial_t w -\Delta w &= \phi_2 \nabla (u|\nabla u|^2) - 2\sum_{j=1}^2 \nabla \partial_j (u-\omega) \partial_j \phi_2 \\
& \qquad - \nabla(u-\omega)\Delta \phi_2 - \phi_2 \nabla(\omega |\nabla \omega|^2) =: G.
}
By \eqref{eq:StHeat}, and with $Z:=L^1(I; L^2(\R^2; \R^3)$, 
\EQ{\label{eq:absch3}
\| w\|_{Y} \les \|\nabla (u(0) -\omega)  \phi_2\|_2 + \| G\|_{Z}\les o(1) + \| G\|_{Z}.
}
To bound $G$, we estimate with a smooth cutoff $\phi_3$ to $D(0,3R_n)$ so that $\phi_2\phi_3=\phi_2$, 
\EQ{
\| \phi_2 \nabla (u|\nabla u|^2) \|_2 &\les \| \phi_2\nabla u\|_\infty \| \phi_3 \nabla u \|_4^2 + \|\phi_2\nabla u\|_\infty \|\phi_3 D^2u \|_2 \\
\| \nabla \partial_j (u-\omega) \partial_j \phi_2 \|_2 &\les R_n^{-1}\| \phi_3 D^2 (u-\omega) \|_2 \\
\|\Delta \phi_2 \nabla(u-\omega)\|_2 &\les R_n^{-2}\|\phi_3 \nabla(u-\omega)\|_2
}
as well as $\| \phi_2 \nabla(\omega |\nabla \omega|^2)\|_2 \les 1$. Furthermore, 
\EQ{
\| \phi_2\nabla u\|_\infty  &\les \| w\|_\infty +1 \\
\| \phi_3 \nabla u \|_4 &\les \| \phi_3 \nabla (u-\omega) \|_4 + 1\\
\|\phi_3 D^2u \|_2  &\les \|\phi_3 D^2 (u-\omega) \|_2  +1
}
uniformly in $R_n$. 
By \eqref{eq:absch3} therefore
\EQ{\label{eq:absch4}
\| w\|_{Y} &\les  o(1) +\| w\|_{Y}^2 + \int_I \|\phi_3\nabla(u(s)-\omega)\|_4^4\, \ud s  
  + \int_I \|\phi_3 D^2(u(s)-\omega)\|_2^2\, \ud s .
}
To perform energy estimates on $u-\omega$ we apply the methods of Struwe~\cite{Struwe85} to the PDE
\EQ{\label{eq:uomPDE}
\partial_t(u-\omega) -\Delta (u-\omega) = (u-\omega)|\nabla\omega|^2 + u(|\nabla u|^2-|\nabla \omega|^2)
}
Integrating by parts against $\phi_3^2 \partial_t(u-\omega)$ implies that (with $T=\tau_n$)
\EQ{\label{eq:ptuom1}
& \int_0^T \int_{\R^2} |\partial_t(u-\omega)|^2 \phi_3^2 \, \ud x \ud t+ \int_0^T\int_{\R^2} \nabla(u(t)-\omega) \cdot \nabla[\partial_t (u-\omega)\phi_3^2]\, \ud x \ud t \\
&= \int_0^T \int_{\R^2} (u-\omega)|\nabla\omega|^2 \cdot\partial_t (u-\omega)\phi_3^2\, \ud x \ud t + \int_0^T \int_{\R^2} u\cdot \partial_t (u-\omega)\phi_3^2(|\nabla u(t)|^2-|\nabla \omega|^2) \, \ud x \ud t ,
}
which implies 
\EQ{\label{eq:ptuom2}
& \int_0^T \int_{\R^2} |\partial_t(u-\omega)|^2 \phi_3^2 \, \ud x \ud t+ \int_{\R^2} |\nabla(u(T)-\omega)|^2\phi_3^2\, \ud x  \\
&\les o(1) + \int_0^T \int_{\R^2} ||\nabla u(t)|^2-|\nabla \omega|^2|^2\,\phi_3^2\, \ud x \ud t+ \int_0^T \int_{\R^2} |\nabla (u(t)-\omega)|^2 |\nabla\phi_3|^2\, \ud x \ud t \\
&\les o(1) + \int_0^T \int_{\R^2} |\nabla (u(t)- \omega)|^4  \,\phi_3^2\, \ud x \ud t
}
The final term on the second line of \eqref{eq:ptuom2} is dominated by $TR_n^{-2} (E(u(0))+E(\omega))$, and so can be absorbed in the $O(\tau_nR_n^{-2})$.  Multiplying~\eqref{eq:uomPDE} by $-\phi_3^2\Delta(u-\omega)$ and integrating by parts yields
\EQ{\label{eq:ptuom3}
& \sum_{j=1}^2 \int_0^T \int_{\R^2} \partial_t |\partial_j(u-\omega)|^2 \phi_3^2  \, \ud x \ud t+ \int_0^T\int_{\R^2} |\Delta(u(t)-\omega)|^2 \phi_3^2\, \ud x \ud t \\
&= \sum_{j=1}^2  \int_0^T \int_{\R^2} \partial_j[(u-\omega)|\nabla\omega|^2\phi_3^2] \cdot\partial_j (u-\omega)\, dxdt -  2\sum_{j=1}^2\int_0^T \int_{\R^2} \partial_t (u-\omega) \phi_3 \partial_j \phi_3 \cdot\partial_j (u-\omega) \, \ud x \ud t \\
&\qquad -   \int_0^T \int_{\R^2}  \Delta (u-\omega) \cdot [\phi_3^2(|\nabla u(t)|^2-|\nabla \omega|^2)u] \, \ud x \ud t,
}
which implies 
\EQ{\label{eq:ptuom4}
& -\frac12  \int_0^T \int_{\R^2} |\partial_t(u-\omega)|^2 \phi_3^2 \, \ud x \ud t+ \int_{\R^2} |\nabla(u(T)-\omega)|^2\phi_3^2\, dx + \frac12\int_0^T\int_{\R^2} |\Delta(u(t)-\omega)|^2 \phi_3^2\, \ud x \ud t \\
&\les o(1)+ \int_0^T \int_{\R^2} |\nabla (u(t)- \omega)|^4  \,\phi_3^2\, \ud x \ud t.
}
Adding this to \eqref{eq:ptuom2} we obtain 
\EQ{\label{eq:ptuom5}
&   \int_0^T \int_{\R^2} |\partial_t(u-\omega)|^2 \phi_3^2 \, \ud x \ud t+ \int_{\R^2} |\nabla(u(T)-\omega)|^2\phi_3^2\, dx + \int_0^T\int_{\R^2} |D^2(u(t)-\omega)|^2 \phi_3^2\, \ud x \ud t\\
&\les o(1) + \int_0^T \int_{\R^2} |\nabla (u(t)- \omega)|^4  \,\phi_3^2\, \ud x \ud t.
}
For the third term on the left-hand side we used another integration by parts to bound
\[
\int_0^T\int_{\R^2} |D^2(u(t)-\omega)|^2 \phi_3^2\, dx dt \les \int_0^T\int_{\R^2} |\Delta(u(t)-\omega)|^2 \phi_3^2\, \ud x \ud t
+\tau_n R_n^{-2}.
\]
By \cite[Lemma~3.2]{Struwe85}, 
\EQ{\label{eq:4D2u}
\int_0^T \int_{\R^2} |\nabla (u(t)- \omega)|^4  \,\phi_3^2\, dxdt &\les \sup_{0\le t\le T} \int_{D(0,3R_n)} |\nabla (u(t,x)-\omega(x))|^2\, \ud x  \\
&\qquad  \Big(\int_0^T\int_{\R^2} |D^2(u(t)-\omega)|^2 \phi_3^2\, dx dt + \tau_n R_n^{-2}\Big).
}
By Lemma~\ref{lem:Struwe-lwp}, the local energy on $D(0,3)$ is small. Hence we conclude from~\eqref{eq:ptuom5} and the bound~\eqref{eq:4D2u} that 
\EQ{
 & \int_0^T \int_{\R^2} |\partial_t(u-\omega)|^2 \phi_3^2 \, \ud x \ud t+ \int_{\R^2} |\nabla(u(T)-\omega)|^2\phi_3^2\, dx + \int_0^T\int_{\R^2} |D^2(u(t)-\omega)|^2 \phi_3^2\, \ud x \ud t \\
&+\int_0^T \int_{\R^2} |\nabla (u(t)- \omega)|^4  \,\phi_3^2\, \ud x \ud t \les o(1).
}
Inserting this bound into~\eqref{eq:absch4} yields $\|w\|_Y=o(1)$, whence from \eqref{eq:absch2}, finally $\|v\|_X=o(1)$. This finishes the proof for disks. 

For the punctured disks we would like to proceed in the same fashion. As a first step, it appears that we would need to obtain bounds on  the suitably normalized ground state eigenfunction $\phi_1$ of the  Dirichlet Laplacian on the punctured disk $$D^*(0,4R_n)=D(0,4R_n)\setminus \bigcup_{\ell=1}^L D(x_\ell,\eps_n/4)$$ 
where $x_\ell$ are as stated in the lemma. This turns out to be misguided as we will now see, in addition to being delicate in terms of obtaining the needed bounds on $\phi_1$ uniformly in the choice of holes.  In fact, it suffices to select $\phi_1$ smooth on $\Omega=\overline{D^*(0,4R_n)}$, vanishing on $\partial\Omega $ so that $-\Delta \phi_1 = V\phi_1$ with $\|V\|_{L^\infty(\Omega)}\les \eps_n^{-2}$ uniformly in all parameters. By rescaling, it will also suffice to set $R_n=1$. 

We define, with $r=|x|$ and $r_\ell=|x-x_\ell|$,
\EQ{\label{eq:phi1prod}
\phi_1(x):= \chi_0(r) \prod_{\ell=1}^L \chi_\ell(r_\ell)
}
with smooth functions $\chi_j>0$ on the interior of $\Omega$, $0\le j\le L$ that we now specify. 
First, $\chi_0(r)=1$ for $r\le 2$, and on the annulus $3\le r\le 4$, $\chi_0(r)$ agrees with the $L^\infty$-normalized Dirichlet ground state of the disk~$D(0,4)$.  To define $\chi_\ell$, consider {\em all radial} Dirchlet eigenfunctions,  $\{\psi_n\}_{n=1}^\infty$ on the annulus $D(0,1)\setminus D(0,\gamma)$. Then $\psi_n(r)=a_nJ_0(\mu_n r)+b_nY_0(\mu_n r)$ where $\mu_n^2$ is the eigenvalue for $n\ge1$  and $a_n,b_n\in\R$. Since $a_n^2+b_n^2>0$, and $\psi_n(\gamma)=\psi_n(1)=0$ the spectrum is characterized by the conditions
\[
J_0(\mu_n \gamma)Y_0(\mu_n)- Y_0(\mu_n \gamma)J_0(\mu_n)=0
\]
Note that the ratio $R(x):=Y_0(x)/J_0(x)$ is strictly increasing on the interval $(0,\rho_1)$ where $\rho_1>0$ is the smallest positive zero of~$J_0$, as well as on any subsequent interval $(\rho_j,\rho_{j+1})$, $j\ge1$. This follows from the fact that the Wronskian $Y_0'(x)J_0(x)-Y_0(x)J_0'(x)=2/(\pi x)>0$. The first crossing of the graphs, which determines the smallest energy $\mu_1>0$, is determined by $R(x)=R(\gamma x)$. The expansion $R(x)=\frac{2}{\pi}\log x+O(1)$ for $x\to0+$ shows that $R(x)>R(x\gamma)$ for all $0<x<\rho_1$, and the first crossing occurs at $x\in (\rho_1,\rho_2)$ and so the ground state energy $\mu_1\in (\rho_1,\rho_2)$. Similarly, we find the other energies $\mu_j\in  (\rho_j,\rho_{j+1})$, for $j\ge1$. We select an eigenfunction $\psi_k$ with $\mu_k\simeq \gamma^{-1}$. This is possible due to the zeros of $J_0(x)$ forming, to leading order, an arithmetic progression. 

We can now define $\chi_\ell$ in~\eqref{eq:phi1prod} by centering this $\psi_k$ for $\gamma=\eps_\ell$ at $x_\ell$, and gluing it smoothly with the constant~$1$ at a distance $\simeq \eps_\ell$ away from the $x_\ell$ hole. The resulting function~$\chi_\ell>0$ will then satisfy $\chi_\ell(x)\ge \frac12$ provided $|x-x_\ell|\gtrsim \eps_\ell$.   

The associated cutoff function $\phi_1$ satisfies 
\EQ{\label{eq:growneck}
\phi_1(x) \ge c_0 \|\phi_1\|_\infty \text{\ \ for all \ \ } x\in D(0,3R_n)\setminus \bigcup_{\ell=1}^L D(x_\ell,\eps_n/2)
}
with some absolute constant $c_0>0$, independently of $R_n, \eps_n$ and the choice of the centers $x_\ell$ as above. It is clear from the preceding  that the ground state would have energy~$\simeq 1$ and does not satisfy~\eqref{eq:growneck}.  Furthermore, $\|\nabla\phi_1\|_\infty\les\eps_n^{-1}\| \phi_1\|_\infty$ and most importantly, 
\[
\|\phi_1^{-1}\Delta\phi_1\|_\infty \le \sum_{\ell=0}^L \| \chi_\ell^{-1}\Delta \chi_\ell\|_\infty \les \eps_n^{-2}
\]
as desired. 
Because of our assumption $\tau_n \eps_n^{-2}\to 0$ as $n\to\infty$, the proof above applies. In the final step we use~\eqref{eq:en-annulus} to control the $L^4$-norm as before, with one modification: we apply~\cite[Lemma 3.2]{Struwe85} locally on $\eps_n$-disks, and then cover the punctured disk~$D^*(0,4R_n)$ with $\eps_n$-disks followed by a summation over the disks in the cover. Cf.~\cite[Lemma 3.1, 3.3]{Struwe85}. 
\end{proof}

\subsubsection{Concentration properties of the heat flow} 

Here we record the fact that the harmonic map heat flow cannot concentrate energy at the self-similar scale. The case of finite time blow up was treated by Topping in~\cite{Topping-winding} and the global in time case follows quickly from a local energy inequality as in Lemma~\ref{lem:Struwe}.

\begin{lem}[No self-similar concentration in the blow-up case]\emph{\cite[Proof of Theorem 1.6, page 288]{Topping-winding}}  \label{lem:ss-bu} Let $u(t)$ be the solution to~\eqref{eq:hmhf}  with maximal time of existence $T_+< \infty$ and initial data $u_0 \in \E$. Let $x_0 \in \R^2$ denote a bubbling point in the sense of Lemma~\ref{lem:lwp} and suppose that $r>0$ is sufficiently small so that $D(x_0, r)$ does not contain any other bubbling point. Then, 
\EQ{
\lim_{t \to T_+} E( u(t); D( x_0, r) \setminus D( x_0, \alpha \sqrt{T_+ - t})) = E( u^*; D(x_0, r))
}
for any $\al >0$, where $u^*$ is as in Lemma~\ref{lem:lwp}. In particular,  there exist $T_0 < T_+$ and functions $\nu, \xi: [T_0, T_+) \to (0, \infty)$ such that $\lim_{t \to T_+} (\nu(t) + \xi(t)) = 0$ and 
\EQ{
\lim_{ t \to T_+}\Big( \frac{\xi(t)}{\sqrt{T_+-t}} + \frac{\sqrt{T_+- t}}{\nu(t)} \Big) = 0,
}
and so that 
\EQ{
\lim_{t \to T_+} E( u(t); D( x_0, \nu(t)) \setminus D( x_0, \xi(t))) = 0.
}
\end{lem}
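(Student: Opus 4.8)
\emph{Overall plan.} I would split the lemma into a ``soft'' part, handled by the weak/strong convergence of Lemma~\ref{lem:lwp} together with lower semicontinuity of the Dirichlet energy, and a ``hard'' part concerning the energy carried in the neck between a fixed small radius and the self-similar scale, for which I would invoke Topping's analysis; then I would build the functions $\nu,\xi$ by a diagonal argument.

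\emph{Step 1 (reduction to a neck estimate).} Since $x_0$ is the only bubbling point in $D(x_0,r)$, Lemma~\ref{lem:lwp} gives, for every fixed $\beta\in(0,r)$,
\[
\lim_{t\to T_+} E\big(u(t);D(x_0,r)\setminus D(x_0,\beta)\big)=E\big(u^*;D(x_0,r)\setminus D(x_0,\beta)\big)=E(u^*;D(x_0,r))-E(u^*;D(x_0,\beta)).
\]
For $t$ close to $T_+$ one has $\alpha\sqrt{T_+-t}<\beta$, so comparing $D(x_0,r)\setminus D(x_0,\alpha\sqrt{T_+-t})$ with the fixed annulus $D(x_0,r)\setminus D(x_0,\beta)$, using lower semicontinuity of the Dirichlet energy under weak $\dot H^1$ convergence, and letting $\beta\to0$ yields the lower bound
\[
\liminf_{t\to T_+} E\big(u(t);D(x_0,r)\setminus D(x_0,\alpha\sqrt{T_+-t})\big)\ge E(u^*;D(x_0,r)).
\]
Hence the asserted identity follows once I establish the matching upper bound, and by the decomposition above this is equivalent to the \emph{neck estimate}
\[
\lim_{\beta\to0}\ \limsup_{t\to T_+}\ E\big(u(t);D(x_0,\beta)\setminus D(x_0,\alpha\sqrt{T_+-t})\big)=0\qquad\text{for every }\alpha>0.
\]

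\emph{Step 2 (the main obstacle).} The neck estimate is exactly what is proved in~\cite[Proof of Theorem~1.6, p.~288]{Topping-winding}, so here I would simply cite it rather than reprove it. For orientation, the mechanism is to rescale into self-similar variables $w(s,y):=u\big(T_+-e^{-s},\,x_0+e^{-s/2}y\big)$ with $s=-\log(T_+-t)\to\infty$, exploit the monotonicity of the associated Gaussian-weighted energy together with the absence of non-constant finite-energy self-similar shrinkers for the two-dimensional harmonic map heat flow, and then transport the vanishing of the neck energy from the ``good'' times supplied by $\int_0^{T_+}\|\calT(u(t))\|_{L^2}^2\,\ud t<\infty$ to all times using the forward and backward localized energy inequalities of Lemma~\ref{lem:Struwe}. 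This transport step is the genuinely hard part, but it is not needed afresh since the result is in the literature.

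\emph{Step 3 (extracting $\nu$ and $\xi$).} Since $u^*\in\dot H^1(\R^2)$, choose $\rho_k\downarrow0$ with $\rho_k<r$ and $E(u^*;D(x_0,\rho_k))<2^{-k}$. Applying the identity proved in Steps~1--2 with $r$ replaced by $\rho_k$ — legitimate, since $D(x_0,\rho_k)\subset D(x_0,r)$ still contains no other bubbling point — and with $\alpha=1/k$, there is $T_k\in(0,T_+)$, which we may arrange to be strictly increasing with $T_k\to T_+$ and $\sqrt{T_+-T_k}<\rho_k/k$, such that for all $t\in[T_k,T_+)$,
\[
E\big(u(t);D(x_0,\rho_k)\setminus D(x_0,\tfrac1k\sqrt{T_+-t})\big)<2^{-k+1}.
\]
Set $T_0:=T_1$ and, for $t\in[T_k,T_{k+1})$, define $\nu(t):=\rho_k$ and $\xi(t):=\tfrac1k\sqrt{T_+-t}$; note $\xi(t)\le\tfrac1k\sqrt{T_+-T_k}<\rho_k/k^2<\nu(t)$. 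As $t\to T_+$ the index $k\to\infty$, whence $\nu(t)\to0$, $\xi(t)\le\sqrt{T_+-t}\to0$, $\xi(t)/\sqrt{T_+-t}=1/k\to0$, $\sqrt{T_+-t}/\nu(t)<\sqrt{T_+-T_k}/\rho_k<1/k\to0$, and $E(u(t);D(x_0,\nu(t))\setminus D(x_0,\xi(t)))<2^{-k+1}\to0$. This produces the functions in the statement, completing the deduction.
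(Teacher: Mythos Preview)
Your proposal is correct. Note, however, that the paper does not supply its own proof of this lemma: it is stated with a citation to Topping~\cite[Proof of Theorem~1.6, p.~288]{Topping-winding} and no argument is given. Your write-up is therefore more detailed than what appears in the paper---you explicitly carry out the reduction to the neck estimate (Step~1), cite Topping for that estimate (Step~2, matching the paper's treatment), and spell out the diagonal construction of $\nu,\xi$ (Step~3), which the paper leaves implicit in the phrase ``In particular''. One minor remark on Step~1: for the lower bound you do not actually need weak lower semicontinuity, since Lemma~\ref{lem:lwp} already gives \emph{strong} $H^1_{\loc}$ convergence on the fixed annulus $D(x_0,r)\setminus D(x_0,\beta)$; your first displayed identity in Step~1 already suffices after letting $\beta\to 0$.
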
 

\begin{lem}[No self-similar concentration in the global case]  \label{lem:ss-global} 
Let $u(t)$ be the solution to~\eqref{eq:hmhf} with initial data $u_0 \in \E$. Suppose that $T_+ = \infty$. Let $y \in \R^2$.  Then, 
\EQ{
\lim_{t \to \infty} E( u(t); \R^2 \setminus D( y, \alpha \sqrt{ t})) = 0
}
for any $\al >0$. In particular, there exist $T_0 < \infty$ and a function $ \xi: [T_0, \infty) \to (0, \infty)$ such  that  
\EQ{
\lim_{ t \to T_+} \frac{\xi(t)}{\sqrt{t}}  = 0,
}
and so that 
\EQ{
\lim_{t \to T_+} E( u(t); \R^2 \setminus D( y, \xi(t))) = 0.
}
\end{lem}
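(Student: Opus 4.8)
The plan is to combine the dissipation bound coming from the energy identity with the forward/backward local energy inequalities of Lemma~\ref{lem:Struwe} and, for small $\alpha$, with the elliptic bubbling analysis; the argument is by contradiction. Throughout, WLOG $y=0$. By Lemma~\ref{lem:lwp} the energy $E(u(t))$ is non-increasing with a limit $E_+:=\lim_{t\to\infty}E(u(t))$, and $\int_0^\infty\|\calT(u(t))\|_{L^2}^2\,\ud t<\infty$; hence $E(u(S))-E_+\to0$ and $\int_S^\infty\|\calT(u(t))\|_{L^2}^2\,\ud t\to0$ as $S\to\infty$.

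The large-scale case is immediate. Applying the forward inequality~\eqref{eq:Struwe-forward} from $t_1=0$ to $t_2=t$ with a cutoff $\phi$ vanishing on $D(0,\tfrac12 R\sqrt t)$, equal to $1$ off $D(0,R\sqrt t)$, with $|\na\phi|\lesssim(R\sqrt t)^{-1}$, gives
\[
E\big(u(t);\R^2\setminus D(0,R\sqrt t)\big)\le E\big(u_0;\R^2\setminus D(0,\tfrac12 R\sqrt t)\big)+CE(u_0)R^{-2},
\]
so, $u_0$ having finite energy, $\limsup_{t\to\infty}E(u(t);\R^2\setminus D(0,R\sqrt t))\le CE(u_0)R^{-2}$ for every $R>0$. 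This already settles the lemma for $R$ large relative to $E(u_0)$, and the remaining difficulty is entirely in the small-$\alpha$ regime.

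For small $\alpha$ I would suppose $\limsup_{t\to\infty}E(u(t);\R^2\setminus D(0,\alpha\sqrt t))=2\eta>0$ and pick $t_n\to\infty$ with $E(u(t_n);\R^2\setminus D(0,\alpha\sqrt{t_n}))\ge\eta$. Fixing $\delta>0$ small (in terms of $\eta,\alpha,E(u_0)$) and choosing, in each window $[t_n,(1+\delta)t_n]$, a time $s_n$ minimizing $\|\calT(u(s))\|_{L^2}^2$ there, one has $s_n\|\calT(u(s_n))\|_{L^2}^2\le\tfrac{1+\delta}{\delta}\int_{t_n}^{(1+\delta)t_n}\|\calT(u)\|_{L^2}^2\,\ud t\to0$. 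Transporting the annular lower bound from $t_n$ to $s_n$ by the backward inequality~\eqref{eq:Struwe-back} costs only $CE(u_0)\delta\alpha^{-2}+|E(u(t_n))-E(u(s_n))|=CE(u_0)\delta\alpha^{-2}+o_n(1)$ and, since $\tfrac\alpha4\sqrt{s_n}\le\tfrac\alpha2\sqrt{t_n}$, yields $E(u(s_n);\R^2\setminus D(0,\tfrac\alpha4\sqrt{s_n}))\ge\eta/2$ for all large $n$ once $\delta$ is small. Rescaling, $v_n(x):=u(s_n,\sqrt{s_n}\,x)$ is a Palais--Smale sequence for the Dirichlet energy ($E(v_n)=E(u(s_n))\le E(u_0)$, $\|\calT(v_n)\|_{L^2}^2=s_n\|\calT(u(s_n))\|_{L^2}^2\to0$) with $E(v_n;\R^2\setminus D(0,\alpha/4))\ge\eta/2$; by the large-scale bound, $\limsup_n E(v_n;\R^2\setminus D(0,R_0))<\eta/4$ for $R_0$ large, so $v_n$ keeps $\ge\eta/4$ of energy in the fixed annulus $\mathcal A=D(0,R_0)\setminus D(0,\alpha/4)$ and loses no energy at spatial infinity. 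The Compactness Lemma~\ref{lem:compact} then gives, along a subsequence, $v_n\to v_\infty$ strongly in $\dot H^1_{\loc}$ off finitely many concentration points, $v_\infty$ harmonic, and $E(v_n)\to E(v_\infty)+\sum_j E(\omega^{(j)})$ with nontrivial bubbles $\omega^{(j)}$ of scale $\to0$; since $E(v_n;\mathcal A)$ does not tend to $0$, either $v_\infty$ is nonconstant or a concentration point lies in $\overline{\mathcal A}$. Either way $u(s_n)$ contains a copy of a nontrivial harmonic map of scale $\lambda_n\lesssim\sqrt{s_n}$ whose energy sits mostly at distance $\gtrsim\alpha\sqrt{s_n}$ from the origin; in the case of a nonconstant $v_\infty$ centred near $0$ this copy has scale $\simeq\sqrt{s_n}$ (by Lemma~\ref{lem:scale}, as otherwise its energy would be confined inside $D(0,\tfrac\alpha4\sqrt{s_n})$).

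The hard part is the final step: turning the persistence of this harmonic bubble into a contradiction with $\int_0^\infty\|\calT(u)\|_{L^2}^2\,\ud t<\infty$. The local energy inequality shows it survives on $[s_n,(1+\delta)s_n]$ with energy $\ge\eta/4$ at scale $\simeq\sqrt t$ far from the origin, and the Compactness Lemma~\ref{lem:compact}, applied along the times $t$ in that window at which $s_n\|\calT(u(t))\|_{L^2}^2\to0$ (generic, by Chebyshev), re-extracts a bubble \emph{at} the self-similar scale $\simeq\sqrt t$. But a bubble pinned at scale $\lambda(t)\simeq\sqrt t$ cannot persist, because the scaling mode of a harmonic map $\Sp^2\to\Sp^2$ lies in $L^2$ only logarithmically, so modulating $\lambda(t)\simeq\sqrt t$ forces $\|\calT(u(t))\|_{L^2}^2\gtrsim(\text{a logarithmic factor})\,t^{-1}$ on a definite fraction of $[s_n,2s_n]$; summing over a dyadic family of such intervals makes $\int_0^\infty\|\calT(u)\|_{L^2}^2\,\ud t$ diverge, which is the desired contradiction. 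Finally, the ``in particular'' statement follows from the main assertion by a diagonalization over $\alpha\downarrow0$.
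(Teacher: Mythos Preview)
Your final step is a genuine gap, and the paper's proof avoids it entirely by a much simpler route.

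In the paper, the lemma is a one-line energy estimate with no bubbling analysis and no contradiction argument. One works directly with the localized energy identity~\eqref{eq:loc-en-id} (not the cruder inequalities of Lemma~\ref{lem:Struwe}), takes the exterior cutoff $\phi_T$ at scale $\alpha\sqrt{T}$, and integrates from a \emph{large} starting time $T_0$ rather than from $0$. The cross term is then bounded via Cauchy--Schwarz in time by
\[
\frac{C}{\alpha\sqrt{T}}\int_{T_0}^{T}\sqrt{E(u_0)}\,\|\partial_t u(t)\|_{L^2}\,\ud t
\;\le\;\frac{C\sqrt{E(u_0)}}{\alpha}\Big(\int_{T_0}^{\infty}\|\partial_t u(t)\|_{L^2}^{2}\,\ud t\Big)^{1/2},
\]
and the right-hand side is small once $T_0$ is large, by~\eqref{eq:tension-L2}. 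The initial term $\|\nabla u(T_0)\,\phi_T\|_{L^2}^2\le 2E\big(u(T_0);\R^2\setminus D(0,\alpha\sqrt{T}/4)\big)$ also goes to $0$ as $T\to\infty$ because $u(T_0)$ has finite energy. This gives the full statement for every $\alpha>0$ directly. Your ``large $\alpha$'' paragraph is morally the $T_0=0$ version of this, and the reason it only yields $\limsup\le CE(u_0)\alpha^{-2}$ is precisely that you threw away the smallness of the tail $\int_{T_0}^\infty\|\partial_t u\|_{L^2}^2$.

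As for your small-$\alpha$ argument: everything up to and including the extraction of a bubble at scale $\simeq\sqrt{s_n}$ via Theorem~\ref{lem:compact} is fine, but the concluding claim --- that a bubble pinned at scale $\lambda(t)\simeq\sqrt{t}$ forces $\|\calT(u(t))\|_{L^2}^2\gtrsim(\text{log})\,t^{-1}$ on a definite fraction of $[s_n,2s_n]$ --- is not justified. Turning the fact that $\Lambda\omega\notin L^2$ into a lower bound on $\|\partial_t u\|_{L^2}$ would require (i) a modulation decomposition valid on the whole interval, keeping $u(t)$ uniformly close to a single translated/rescaled harmonic map, and (ii) a quantitative coercivity or orthogonality statement relating $\partial_t u$ to $\dot\lambda\,\Lambda\omega_\lambda$. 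Neither is available here: the Compactness Lemma only gives information at the specific times where $\sqrt{t}\,\|\calT(u(t))\|_{L^2}$ is small, and step (ii) is ill-posed as written since $\Lambda\omega\notin L^2$ means you cannot simply read off $\|\partial_t u\|_{L^2}\gtrsim|\dot\lambda|\,\|\Lambda\omega_\lambda\|_{L^2}$. This is exactly the kind of delicate rigidity statement that requires substantial additional machinery; it is not a gap one can wave through in a sketch. Replace the whole small-$\alpha$ branch by the sharper Cauchy--Schwarz estimate above and the proof becomes two lines.
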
 

\begin{proof} 
Fix $y \in \R^2$ and $\al>0$. Let $\eps>0$ and, using~\eqref{eq:tension-L2} choose $T_0>0$ so that, 
\EQ{
\frac{4 \sqrt{E( u(0))}}{ \al}  \Big( \int_{T_0}^\infty  \| \p_t u(t) \|_{L^2}^2 \, \ud t  \Big)^{\frac{1}{2}} \le \frac{\eps}{2}.   
}
Next, let $T_1 \ge T_0$ be sufficiently large so that, 
\EQ{
E\big( u(T_0);  \R^2 \setminus D\big(y, \frac{\al\sqrt{T}}{4}\big)\big)  \le \frac{\eps}{2} 
}
for all $T \ge T_1$. Fixing any such $T$, we set 
\EQ{
 \phi_T(\abs{x})  = 1 - \chi( 4\abs{x}/ \al \sqrt{T})
 }
where $\chi(r)$ is a smooth function on $(0, \infty)$ such that $\chi(r) = 1$ for $r \le 1$, $\chi(r) = 0$ if $r \ge 4$, and  $\abs{\chi'(r)} \le 1$ for all $r \in (0, \infty)$. 
We now use the identity~\eqref{eq:loc-en-id} on the time interval  $[T_0, T$] and with the function $\phi = \phi_T$ to obtain the inequality 
\EQ{
\frac{1}{2}  \| \na u(T) \phi_T \|_{L^2}^2 & \le   \frac{1}{2}  \| \na u(T_0) \phi_T\|_{L^2}^2 +  \int_{T_0}^T   \abs{\na u(t) } \abs{ \p_t u(t) } \abs{ \na \phi_T} \phi_T\, \ud t  \\
&\le E( u(T_0);  \R^2 \setminus D(y, \frac{\al\sqrt{T}}{4})) +  \frac{4 \sqrt{E( u(0))}}{ \al}  \Big( \int_{T_0}^\infty  \| \p_t u(t) \|_{L^2}^2 \, \ud t  \Big)^{\frac{1}{2}} \le \eps
}
which holds for all $T \ge T_1$, completing the proof.  
\end{proof}

\subsection{Localized sequential bubbling} 
%

The following localized sequential bubbling lemma proved in a series of works by Struwe~\cite{Struwe85}, Qing~\cite{Qing}, Ding-Tian~\cite{DT}, Wang~\cite{Wang}, Qing-Tian~\cite{QT}, and Lin-Wang~\cite{Lin-Wang98}. We state as a lemma below a summary of these works, which can be found, for example, in Topping's paper~\cite[Theorem 1.1]{Topping04}. 

\begin{thm}[Compactness Lemma] \emph{\cite[Theorem 1.2]{Qing}, \cite[Theorem 1.1]{Topping04}} \label{lem:compact} 
Let $u_n: \R^2 \to \Sp^2 \subset \R^3$ be a sequence of $C^2$  maps 
such that  $\limsup_{n \to \infty} E( u_n) < \infty$. Let $\rho_n \in (0, \infty)$ be a sequence and suppose that 
\EQ{ \label{eq:no-tension} 
\lim_{n \to \infty} \rho_n \| \calT(u_n) \|_{L^2} = 0.
}
Then, for every sequence $y_n \in \R^2$,  there exists a sequence $R_n \to \infty$ 
a fixed integer $M \ge 0$,  a constant $C>0$,  a harmonic map $\om_0$ (possibly constant), non-constant harmonic maps $\om_1, \dots, \om_M$, and sequences of vectors $ b_{ 1, n}, \dots, b_{ M, n} \in D( y_n, C \rho_n)$ and scales $\mu_{ 1, n}, \dots, \mu_{ M, n} \in (0, \infty)$ 
so that  $\max_j\mu_{j,n}/\rho_n\to0$ as $n\to\infty$ and 
\EQ{ \label{eq:Qing} 
\lim_{n \to \infty} &\Bigg[  E\Big( u_n - \om_0\big( \frac{\cdot - y_n}{ \rho_n}\big)- \sum_{j=1}^M \big( \om_{j} \big( \frac{\cdot - b_{ j, n}}{\mu_{ j, n}} \big) - \om_{j}(\infty) \big); D( y_n, R_n \rho_n)\Big)\\
&\quad + \Big\|u_n - \om_0\big( \frac{\cdot - y_n}{ \rho_n}\big) - \sum_{j=1}^M \big( \om_{j} \big( \frac{\cdot - b_{ j, n}}{\mu_{ j, n}} \big) - \om_{j}(\infty) \big) \Big\|_{L^\infty( D( y_n, R_n \rho_n))}  \\
&\quad + \sum_{j\neq k} \Big( \frac{\mu_{j, n}}{\mu_{k, n}} + \frac{\mu_{ k, n}}{\mu_{ j, n}} + \frac{|b_{ k, n} - b_{ j, n}|^2}{\mu_{ j, n} \mu_{ k, n}} \Big)^{-1} + \sum_{j=1}^M \frac{ \mu_{ j, n}}{\dist( b_{ j, n} ,  \p D( y_n, C\rho_n))}  \Bigg] = 0. 
}
In particular $\lim_{n \to \infty} \bs \de( u_n; D( y_n, \ti R_n \rho_n)) = 0$ for any sequence $1 \ll \ti R_n \ll  R_n$. 
There exist $L\le M$ points $x_{1}, \dots, x_{L} \in D(0, C)$ so that, 
\EQ{ \label{eq:w22-body} 
u_n(y_n + \rho_n \cdot) &\rightharpoonup \om_0 \, \, \textrm{weakly in} \, \, H^{1}(D(0, C); \Sp^2) \\
u_n(y_n + \rho_n \cdot) &\to \om_0 \, \, \textrm{strongly in} \, \, W^{2, 2}_{\loc}(D(0, C)\setminus \{x_{1}, \dots, x_{L}\} ; \Sp^2)
}
For each $j \in \{1, \dots, M\}$ there exist a finite set of points $S_j$, possibly empty and with $\# S_j \le M-1$, such that 
\EQ{\label{eq:w22-bubbles} 
u_n( b_{j, n} + \mu_{j, n} \cdot) \to \om_j \, \, \textrm{strongly in} \, \, W^{2, 2}_{\loc}(\R^2 \setminus S_j ; \Sp^2). 
}
Finally, there exists an integer $K \ge 0$ so that
\EQ{ \label{eq:en-quant} 
\lim_{n \to \infty} E( u_n; D( y_n, R_n\rho_n))  = 4 \pi K.
}
\end{thm}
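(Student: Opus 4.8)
The plan is to reduce the statement, by a single rescaling, to the classical bubbling analysis for almost-harmonic maps of Sacks--Uhlenbeck, Struwe, Ding--Tian, Qing, Qing--Tian, and Lin--Wang, and then to read off the two ``in particular'' clauses. First I would fix the sequence $y_n$ and pass to the rescaled maps $v_n(x) := u_n(y_n + \rho_n x)$. Conformal invariance of the Dirichlet energy in two dimensions gives $E(v_n; D(0,R)) = E(u_n; D(y_n, R\rho_n))$, so $\limsup_n E(v_n) < \infty$, and since $\calT(v_n)(x) = \rho_n^2\, \calT(u_n)(y_n + \rho_n x)$ one has $\|\calT(v_n)\|_{L^2(\R^2)} = \rho_n \|\calT(u_n)\|_{L^2(\R^2)} \to 0$ by \eqref{eq:no-tension}. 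Thus $(v_n)$ is a bounded-energy sequence of $C^2$ maps whose tension tends to zero in $L^2$, and it suffices to produce, on each fixed disc $D(0,R)$, a harmonic body $\om_0$ and non-constant harmonic maps $\om_1,\dots,\om_M$ with centers $b_{j,n} \in D(0,C)$ and scales $\mu_{j,n}\to 0$ realizing \eqref{eq:Qing}, \eqref{eq:w22-body}, \eqref{eq:w22-bubbles} and \eqref{eq:en-quant} on $D(0,R)$; the sequence $R_n \to \infty$ is then obtained by a diagonal argument over $R = 1,2,3,\dots$, and $\bs\de(u_n; D(y_n, \ti R_n\rho_n)) \to 0$ for $1 \ll \ti R_n \ll R_n$ will follow from \eqref{eq:Qing}.

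On $D(0,R)$ I would run the standard concentration argument. By the $\eps$-regularity Lemma~\ref{lem:eps-compact}, on any subdisc where the local energy of $v_n$ stays below $\eps_0$ the maps $v_n - (v_n)_{\operatorname{avg}}$ are bounded in $W^{2,2}$, hence strongly precompact in $W^{2,2}_{\loc}$, with harmonic limit because $\calT(v_n) \to 0$. A covering argument then isolates finitely many concentration points $x_1,\dots,x_L$; off these points $v_n \rightharpoonup \om_0$ weakly in $H^1$ and strongly in $W^{2,2}_{\loc}$, which is \eqref{eq:w22-body} with a finite-energy harmonic body $\om_0$. At each $x_i$ I would blow up at the concentration scale, pass to the limit (again using $\|\calT(v_n)\|_{L^2} \to 0$ and Rellich compactness), and invoke the Removable Singularity Theorem~\cite{SU} together with Theorem~\ref{thm:hmap} to obtain a non-constant harmonic map $\om_j: \R^2 \to \Sp^2$ extending smoothly to the sphere, with $E(\om_j) = 4\pi|\deg \om_j| \ge 4\pi$, its finite singular set $S_j$, and the strong convergence \eqref{eq:w22-bubbles}. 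Since every bubble costs at least $4\pi$ of energy while $\limsup_n E(v_n) < \infty$, the extraction terminates after finitely many bubbles at finitely many scales --- this quantization-driven finiteness is what makes the induction stop --- producing the integer $M$, the scales $\mu_{j,n}$, the centers $b_{j,n}$, and the scale separation $(\mu_{j,n}/\mu_{k,n} + \mu_{k,n}/\mu_{j,n} + |b_{j,n} - b_{k,n}|^2/(\mu_{j,n}\mu_{k,n}))^{-1} \to 0$.

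The hardest step, which I would import from the cited literature, is the no-neck statement: the energy of $v_n$ on the annular regions separating the body scale from the bubble scales, and the bubble scales from one another, must tend to zero, and in fact $v_n$ must converge in $L^\infty$ --- not merely in energy --- to the superposition $\om_0 + \sum_j(\om_j((\cdot - b_{j,n})/\mu_{j,n}) - \om_j(\infty))$. The vanishing of the neck energy comes from a Pohozaev-type identity on dyadic annuli in which the tension contribution is controlled by $\|\calT(v_n)\|_{L^2} \to 0$, as in~\cite{DT, Lin-Wang98}; the $L^\infty$ upgrade follows by combining the vanishing neck energy with the $L^\infty$ bound of Lemma~\ref{lem:eps-compact} on each dyadic annulus, ruling out oscillation along the neck, which is the content of~\cite{QT, Lin-Wang98}. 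Using in addition the exterior decay \eqref{eq:ext_ener_dec} of Lemma~\ref{lem:decay} near $\partial D(0,R)$, these inputs yield \eqref{eq:Qing}; undoing the rescaling and diagonalizing gives $R_n \to \infty$. I expect this neck analysis to be the genuine obstacle; here it is used as a black box from~\cite{DT, Qing, QT, Lin-Wang98, Topping04}.

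Finally I would deduce the two ``in particular'' clauses. To get $\bs\de(u_n; D(y_n, \ti R_n\rho_n)) \to 0$ for $1 \ll \ti R_n \ll R_n$ I would exhibit an admissible candidate in Definition~\ref{def:d}: the multi-bubble built from $\om_1,\dots,\om_M$ placed at $(b_{j,n},\mu_{j,n})$ (together with $\om_0$ placed at $(y_n,\rho_n)$ when $\om_0$ is non-constant), with constant part $\om_0(\infty)$, and with $\nu_j, \xi_j$ chosen as geometric means of consecutive scales among $\{\mu_{k,n}\} \cup \{\ti R_n\rho_n\}$; then Lemma~\ref{lem:scale}, the scale separation of the previous step, the no-neck convergence, and \eqref{eq:ext_ener_dec} force every term of $\bfd$ to tend to zero. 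And \eqref{eq:en-quant} follows from Lemma~\ref{lem:mb-energy} together with the vanishing neck energy: $E(u_n; D(y_n, R_n\rho_n)) \to E(\om_0) + \sum_{j=1}^M E(\om_j)$, and since $E(\om_0) \in 4\pi\Z_{\ge 0}$ by Theorem~\ref{thm:hmap} and each $E(\om_j) \in 4\pi\Z_{\ge 1}$, the limit equals $4\pi K$ for an integer $K \ge 0$.
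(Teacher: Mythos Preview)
The paper does not supply its own proof of this theorem: it is stated as a summary of results from the cited literature (Struwe, Qing, Ding--Tian, Wang, Qing--Tian, Lin--Wang), with a pointer to Topping~\cite[Theorem~1.1]{Topping04} for a consolidated statement. There is therefore nothing in the paper to compare your argument against line by line.

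That said, your sketch is a faithful outline of how the cited works establish the result. The reduction by rescaling to a Palais--Smale sequence $v_n$ with $\|\calT(v_n)\|_{L^2}\to 0$, the use of $\eps$-regularity (Lemma~\ref{lem:eps-compact}) to isolate concentration points and obtain the body map, the inductive blow-up with removable singularities and energy quantization to terminate the extraction, and the identification of the no-neck theorem (Ding--Tian for energy, Qing--Tian/Lin--Wang for $L^\infty$) as the genuine black box are all exactly right. Your derivation of the two ``in particular'' clauses from the main decomposition is also correct in spirit; the only point to watch is that when $\om_0$ is non-constant you must include it as one of the harmonic maps in the multi-bubble configuration entering $\bs\de$, with scale $\rho_n$ and center $y_n$, which you do note.
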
 

\begin{rem} Theorem~\ref{lem:compact} can be combined with Lemmas~\ref{lem:ss-bu},~\ref{lem:ss-global}, and the bound~\eqref{eq:tension-L2} to prove a sequential decomposition as in Theorem~\ref{thm:main1} along the~\emph{well-chosen} sequence of times described in Remark~\ref{rem:seq}; see, e.g.,~\cite[Section 2]{Topping-winding}. We note that the second statement~\eqref{eq:Qing} gives $L^\infty$ convergence on the whole disc $D( y_n, R_n \rho_n)$ rather than just at the scales of the bubbles, which is all that is required for $\bs \de( u_n; D(y_n,  \ti R_n \rho_n))$ to tend to zero for $1 \ll \ti R_n \ll R_n$; see Definition~\ref{def:d}. 
\end{rem} 
\begin{rem} 
Parker~\cite{Parker} proved an earlier version of Theorem~\ref{lem:compact} in the case when the sequence $u_n$ consists of harmonic maps, i.e., $\calT(u_n) = 0$ for each $n$. We use this restricted version of Theorem~\ref{lem:compact} (for sequences consisting only of harmonic maps) at several instances in the next section. 
\end{rem} 


\section{Proofs of the main results} \label{sec:collisions} 

\subsection{The minimal collision energy}\label{ssec:collision} 
For the remainder of the paper we fix a solution
$u(t)$ of \eqref{eq:hmhf},
defined on the time interval $I_+=[0, T_+)$
where $T_+<\infty$ in the finite time blow-up case and $T_+= \infty$ in the global case. We fix $\gamma_0>0$ such that $\gamma_0 \le \min\{\frac{1}{100}, \frac{1}{100E( u_0)}\}$ and sufficiently small so that Lemma~\ref{lem:decay} holds.  From now on we omit the subscript $\gamma_0$ from $\bfd_{\gamma_0}$ and $\bs \delta_{\gamma_0}$ and for a harmonic map $\omega$ we write $\lambda(\omega) = \lam(\omega; \gamma_0)$ and $a(\om) = a(\om; \gamma_0)$ for the scale and center. 

Our strategy is to study collisions of bubbles, which we define as follows.

\begin{defn}[The minimal collision energy] \label{def:K} Let $K$ be the smallest natural number with the following properties. There exist sequences $y_n \in \R^2$, $\rho_n, \eps_n \in (0, \infty)$, $\sigma_n, \tau_n \in (0, T_+)$ and $\eta>0$, with 
$\eps_n \to 0$,  $0 < \sigma_n < \tau_n<T_+$,  $\sigma_n, \tau_n \to T_+$, 
so that
\begin{enumerate} 
\item $\bs \de( u( \sigma_n); D(y_n, \rho_n))  \le \eps_n;  $ 
\item $\bs \de( u( \tau_n); D(y_n, \rho_n))  \ge  \eta $; 
\item the interval $I_n:= [\sigma_n, \tau_n]$ satisfies $| I_n | \le \eps_n \rho_n^2$; 
\item $E( u(\sigma_n); D( y_n, \rho_n)) \to 4 K \pi $ as $ n \to \infty$; 

\end{enumerate} 
We call $[\sigma_n, \tau_n]$ a sequence of collision intervals associated to $K$ and the parameters $y_n, \rho_n, \eps_n$, and $\eta$,  and we write $[\sigma_n, \tau_n] \in  \calC_K(y_n, \rho_n, \eps_n, \eta)$. 
\end{defn} 

\begin{rem} \label{rem:neck} By Definition~\ref{def:d} and Property (1) in Definition~\ref{def:K}, we can associate to each sequence of collision intervals $[\sigma_n, \tau_n] \in \calC_K(y_n, \rho_n, \eps_n, \eta)$  sequences $\xi_n, \nu_n \in (0, \infty)$ with $\lim_{n \to \infty}( \frac{ \xi_n}{\rho_n} + \frac{\rho_n}{\nu_n}) = 0$, and a sequence of constants $\om_n \in \Sp^2$  so that 
\EQ{
\lim_{n \to \infty}\Big( E( u(\sigma_n); D(y_n, 4\nu_n) \setminus D( y_n, 4^{-1}\xi_n)) + \| u(\sigma_n) - \omega_n \|_{L^\infty( D(y_n, 4\nu_n) \setminus D( y_n, 4^{-1}\xi_n))}\Big) = 0. 
}
Using Property (3) in Definition~\ref{def:K} we can always ensure (by enlarging the excised discs above) that 
\EQ{ \label{eq:ci-xi} 
| I_n | = \tau_n - \sigma_n \ll \xi_n^2. 
}
Then, by Lemma~\ref{lem:Struwe} and Lemma~\ref{lem:sup} the limits above can be propagated throughout the whole collision interval $I_n$ yielding, 
\EQ{\label{eq:ci-neck}
\lim_{n \to \infty} \sup_{t \in [\sigma_n, \tau_n]} E( u(t); D(y_n, \nu_n) \setminus D( y_n,  \xi_n)) +  \| u( t) - \omega_{n} \|_{L^\infty( D( y_n, \nu_n) \setminus D( y_n, \xi_n))} = 0.
} 
Moreover the above holds after enlarging $\xi_n$ or shrinking $\nu_n$, i.e, for any $\ti \xi_n, \ti \nu_n$ with  $\xi_n \ll \ti \xi_n \ll \rho_n \ll \ti \nu_n \ll \nu_n$. 
\end{rem} 

\begin{lem}[Existence of $K \ge 1$]
\label{lem:K}
If Theorem~\ref{thm:main} is false, then $K$ is well-defined and $K \ge  1$.
\end{lem}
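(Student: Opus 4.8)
The plan is to argue by contradiction with the failure of Theorem~\ref{thm:main}, producing in that case, for some integer $K\ge1$, sequences verifying (1)--(4) of Definition~\ref{def:K}. I describe the finite-time case, the global case being identical after replacing $\sqrt{T_+-t}$ by $\sqrt t$ and Lemma~\ref{lem:ss-bu} by Lemma~\ref{lem:ss-global}. First I would unwind the negation of Theorem~\ref{thm:main}: after passing to a subsequence it supplies $\eta>0$, times $\tau_n\to T_+$, points $y_n$, radii $\rho_n\le\sqrt{T_+-\tau_n}$, and $\al_n\to0$, $\be_n\to\infty$ with $\be_n\rho_n\le\sqrt{T_+-\tau_n}$ such that $\bs\de(u(\tau_n);D(y_n,\rho_n))\ge\eta$ while $E(u(\tau_n);D(y_n,\be_n\rho_n)\setminus D(y_n,\al_n\rho_n))\to0$. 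When the ``moreover'' assertion fails this is immediate; when only the first assertion fails one takes $\rho_n=\sqrt{T_+-\tau_n}$ and $y_n=y$ a point where $\bs\de$ stays bounded below --- necessarily a bubbling point, since otherwise $u(\tau_n)\to u^*(y)$ uniformly near $y$ and $\bs\de\to0$ --- and reads off the annulus from Lemma~\ref{lem:ss-bu}.

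Next I would manufacture the left endpoints. Set $\bar g_n:=\int_{2\tau_n-T_+}^{T_+}\|\calT(u(t))\|_{L^2}^2\,\ud t\to0$ (finite by Lemma~\ref{lem:lwp}), enlarge $\al_n$ to $\max(\al_n,\bar g_n^{1/8},n^{-1/8})$ (preserving $\al_n\to0$ and the annulus bound), and pick $\eps_n\to0$ with $\sqrt{\bar g_n}\le\eps_n\ll\al_n^2$. With $\delta_n:=\eps_n\rho_n^2\le T_+-\tau_n$ one has $\tau_n-\delta_n\to T_+$, and averaging the tension over $[\tau_n-\delta_n,\tau_n]$ yields $\sigma_n$ there with $\rho_n^2\|\calT(u(\sigma_n))\|_{L^2}^2\le\eps_n^{-1}\bar g_n\to0$ and $\tau_n-\sigma_n\le\eps_n\rho_n^2$. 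Then I would apply the Compactness Lemma (Theorem~\ref{lem:compact}) to $u(\sigma_n)$ at scale $\rho_n$, centers $y_n$: it yields a rate $R_n'\to\infty$, a body map and bubbles of scales $\ll\rho_n$, and $\bs\de(u(\sigma_n);D(y_n,\hat R_n\rho_n))\to0$ for every $\hat R_n\to\infty$ with $\hat R_n\ll R_n'$. Putting $\hat R_n:=\min(\lfloor\sqrt{R_n'}\rfloor,\lfloor\sqrt{\be_n}\rfloor)$ and $\hat\rho_n:=\hat R_n\rho_n$ (so $\rho_n\le\hat\rho_n\le\be_n\rho_n$), property (1) holds for the discs $D(y_n,\hat\rho_n)$ by construction, property (3) since $\tau_n-\sigma_n\le\eps_n\hat\rho_n^2$, and --- because $E(u(\sigma_n);D(y_n,\hat\rho_n))\le E(u_0)$ --- after a further subsequence the energy converges, its limit being $4\pi K$ for an integer $K\ge0$ by expanding the near-optimal configurations from property (1) and using the separation conditions in $\bfd$ together with the quantization of Theorem~\ref{thm:hmap}; that is property (4).

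The remaining point, property (2), is the crux. I would show $\bs\de(u(\tau_n);D(y_n,\hat\rho_n))\ge\eta'>0$ as follows. The Compactness-Lemma decomposition forces $u(\sigma_n)$ to be $L^\infty$-close to a constant with small energy on $D(y_n,\hat\rho_n)\setminus D(y_n,C\rho_n)$ for an absolute $C$ (the body map is $\approx$ its value at infinity there and all bubbles sit inside $D(y_n,C\rho_n)$). Propagating this forward with Lemmas~\ref{lem:Struwe} and~\ref{lem:sup}, and the failure-data annulus at $\tau_n$ backward with Lemma~\ref{lem:Struwe} --- both legitimate since $\tau_n-\sigma_n\ll(\al_n\rho_n)^2$ --- I would obtain that $u(\tau_n)$ too is $L^\infty$-close to a constant with small energy on a separating annulus $D(y_n,\hat\rho_n/2)\setminus D(y_n,C'\rho_n)$. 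If $\bs\de(u(\tau_n);D(y_n,\hat\rho_n))$ were not bounded below, any near-optimal configuration would (by smallness on that separating annulus) have constant background and all bubbles inside $D(y_n,C'\rho_n)$; restricting such a configuration across the neck --- slightly enlarging $\rho_n$ to dodge the $O(1)$ bubble centers, which is harmless as the intervening annulus carries no energy --- would give $\bs\de(u(\tau_n);D(y_n,\rho_n))\to0$, contradicting the failure data. This scale-reconciliation is precisely the step I expect to be delicate: the Compactness Lemma only certifies smallness of $\bs\de$ at the scale $\hat\rho_n$, a slowly growing multiple of the natural bubbling scale $\rho_n$, and closing the gap to the scale $\rho_n$ delivered by the negation of Theorem~\ref{thm:main} requires this neck argument and a careful interlocking of $\rho_n$, $\hat\rho_n$, $\al_n\rho_n$, $\be_n\rho_n$ and the rate $R_n'$. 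With property (2) established, the set of admissible $K$ is nonempty, so the least such $K$ --- hence $K$ itself --- is well-defined.

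Finally, to get $K\ge1$ I would exclude $K=0$. If $K=0$ then $E(u(\sigma_n);D(y_n,\hat\rho_n))\to0$, so by property (1) and quantization the near-optimal configurations carry no bubbles for large $n$: there are $\xi_n\ll\hat\rho_n\ll\nu_n$ and constants $\om_n\in\Sp^2$ with $E(u(\sigma_n);D(y_n,\nu_n))\to0$ and $\|u(\sigma_n)-\om_n\|_{L^\infty(D(y_n,\nu_n)\setminus D(y_n,\xi_n))}\to0$. Since $\xi_n\|\calT(u(\sigma_n))\|_{L^2}\ll\rho_n\|\calT(u(\sigma_n))\|_{L^2}\to0$, Lemma~\ref{lem:eps-compact} applied on $D(y_n,\xi_n)$ upgrades these to $\|u(\sigma_n)-\om_n\|_{L^\infty(D(y_n,\nu_n))}\to0$; passing to $\om_n\to\om_\infty$ and propagating with Lemmas~\ref{lem:Struwe} and~\ref{lem:sup} (using $\tau_n-\sigma_n\ll\nu_n^2$) gives $\|u(\tau_n)-\om_\infty\|_{L^\infty(D(y_n,\nu_n/4))}\to0$ and $E(u(\tau_n);D(y_n,\nu_n/2))\to0$, so $\bs\de(u(\tau_n);D(y_n,\hat\rho_n))\to0$ via the trivial ($M=0$) configuration with background $\om_\infty$. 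This contradicts property (2), and therefore $K\ge1$.
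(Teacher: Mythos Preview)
Your overall strategy is correct, and the construction of $\sigma_n$ via time-averaging the tension, the appeal to the Compactness Lemma, and the $K\ge 1$ argument are all sound. However, you take an unnecessary detour by passing to the enlarged scale $\hat\rho_n=\hat R_n\rho_n$ and then attempting to ``reconcile'' back to $\rho_n$ for property~(2). The paper avoids this entirely: it first propagates the failure-data annulus \emph{backward} to time $\sigma_n$ (your proposal mentions this step but does not use it where it matters), obtaining
\[
E\big(u(\sigma_n);\, D(y_n,\tfrac12\beta_n\rho_n)\setminus D(y_n,2\alpha_n\rho_n)\big)\to 0.
\]
Feeding this annulus into the Compactness-Lemma decomposition at scale $\rho_n$ forces the body map $\omega_0$ to be constant \emph{and} forces every bubble center and scale to lie inside $D(y_n,2\alpha_n\rho_n)$; combined with the $L^\infty$ line of \eqref{eq:Qing}, this yields $\bs\delta(u(\sigma_n);D(y_n,\rho_n))\to 0$ directly at the \emph{original} radius $\rho_n$. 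Property~(2) is then immediate from the failure data, and no scale-reconciliation is needed.

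Your reconciliation step, as written, is not quite complete. The separating annulus you propagate forward from $\sigma_n$ to $\tau_n$ has inner radius $C'\rho_n\simeq\rho_n$, not $\ll\rho_n$, so it cannot by itself supply the parameter $\xi\ll\rho_n$ required in Definition~\ref{def:d} to witness smallness of $\bs\delta(u(\tau_n);D(y_n,\rho_n))$; in particular you have no $L^\infty$ control on the region $D(y_n,C'\rho_n)\setminus D(y_n,\alpha_n\rho_n)$ at time $\tau_n$ (small energy alone does not suffice, and you have no tension bound there). The fix is exactly the paper's move: use the annulus at $\sigma_n$ \emph{before} invoking the Compactness Lemma, so that the bubbles are already localized in $D(y_n,2\alpha_n\rho_n)$ and the $L^\infty$ clause of \eqref{eq:Qing} gives the outer-neck control down to radius $2\alpha_n\rho_n$ at time $\sigma_n$. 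Then work at scale $\rho_n$ throughout.
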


\begin{proof}[Proof of Lemma~\ref{lem:K}]  
Assume Theorem~\ref{thm:main} is false (in either the case $T_+< \infty$ or $T_+ = \infty$). Then we can find $\eta>0$, sequences $\tau_n \to T_+$, $y_n \in \R^2$, $0< \rho_n <\infty$ with $\rho_n  \le \sqrt{T_+ - t_n}$  in the case $T_+< \infty$ and $\rho_n \le \sqrt{t_n}$ in the case $T_+ = \infty$  so that 
\EQ{
\bs \de( u( \tau_n); D( y_n, \rho_n)) \ge \eta, \quad \forall\, n, 
}
and sequences $\al_n \to 0$ and $\be_n \to \infty$ so that 
\EQ{ \label{eq:neck-tau}
\lim_{n \to \infty} E( u( \tau_n); D( y_n, \beta_n \rho_n) \setminus D( y_n, \al_n \rho_n))  = 0, 
}
In case $\rho_n \simeq \sqrt{T_+- \tau_n}$ or $\rho_n \simeq \sqrt{t_n}$, the existence of $\alpha_n,  \beta_n$ as above is guaranteed by Lemma~\ref{lem:ss-bu} or Lemma~\ref{lem:ss-global}. 

We claim that  there exists a sequence of times $\sigma_n  < \tau_n$, $\sigma_n \to T_+$, such that 
\EQ{\label{eq:sigma_n-tension} 
|[\sigma_n, \tau_n]| \ll \rho_n^2, \mand \lim_{n \to \infty} \rho_n^{2} \| \calT( u(\sigma_n)) \|_{L^2}^2 = 0 .
} 
If not, we could find numbers $c, c_0 >0$ and a subsequence of the $\tau_n$ so that 
\EQ{
\rho_n^{2} \| \calT( u(t)) \|_{L^2}^2 \ge c_0, \quad \forall t \in [\tau_{n}- c \rho_n^2, \tau_n].
}
But then, 
\EQ{
\int_0^{T_+}  \| \calT(u(t)) \|_{L^2}^2 \, \ud t  \ge \sum_n \int_{\tau_{n}- c \rho_n^2}^{\tau_{n}}  \| \calT(u(t)) \|_{L^2}^2 \, \ud t \ge c_0 \sum_n \int_{\tau_{n}- c \rho_n^2}^{\tau_{n}} \rho_n^{-2} \, \ud t  = \infty, 
}
and the above contradicts~\eqref{eq:tension-L2}. 

Using~\eqref{eq:Struwe-back} from Lemma~\ref{lem:Struwe}, and the fact that $|E( u(\sigma_n)) - E( u(\tau_n))| \to 0$ since $\sigma_n, \tau_n \to T_+$ (see Lemma~\ref{lem:lwp}), we see that~\eqref{eq:neck-tau} can be used to ensure that
\EQ{\label{eq:neck-sigma} 
\lim_{n \to \infty}E( u(\sigma_n); D( y_n, 2^{-1} \beta_n \rho_n) \setminus D( y_n, 2 \alpha_n \rho_n))= 0. 
}
Given the sequence $\sigma_n$ in~\eqref{eq:sigma_n-tension} we can apply the Compactness Lemma~\ref{lem:compact} to $u(\sigma_n)$, and conclude that after passing to a subsequence (which we still denote by $\sigma_n$), we see that a bubble decomposition as in~\eqref{eq:Qing} holds
for some sequence $R_n \to \infty$. Because of~\eqref{eq:neck-sigma} we see that the harmonic map $\om_0$ in~\eqref{eq:Qing} must be constant, i.e., $\om_0(x) = \om \in \Sp^2$, and we can conclude that 
\EQ{\label{eq:sigma_n}
\lim_{n \to \infty} \bs \de( u(\sigma_n) ; D(y_n, \rho_n) ) = 0.
}
By Lemma~\ref{lem:mb-energy} we can find an integer $K \ge 0$ so that 
\EQ{
E( u(\sigma_n); D( y_n, \rho_n))  \to 4 \pi K. 
}
We have shown that Properties (1)--(4) hold for the intervals $[\sigma_n,\tau_n]$. 
This proves that $K$ is well defined and $\ge 0$. 

We claim that $K \ge 1$.  Suppose $K=0$ and $y_n, \rho_n, \eps_n, \sigma_n, \tau_n$ are as in Definition~\eqref{def:K}. But then, we can find $\xi_n, \nu_n$ and $\om \in \Sp^2$ as in~\eqref{eq:ci-neck} in Remark~\ref{rem:neck}. By Lemma~\ref{lem:Struwe} we have  
\EQ{
E( u(\tau_n); D( y_n, \rho_n)) = o_{n}(1), 
} 
and by~\eqref{eq:ci-neck} in Remark~\ref{rem:neck} we have, 
\EQ{
\| u(\tau_n) - \om \|_{L^\infty( D( y_n, \nu_n)\setminus D( y_n,  \xi_n ))} + \frac{\xi_n}{\rho_n} + \frac{\rho_n}{\nu_n}  = o_n(1), 
}
 which make it impossible for $(2)$ in Definition~\ref{def:K} to be satisfied. This proves that $K \ge 1$. 
\end{proof}


\subsection{Lengths of collision intervals} \label{sec:conclusion}

We assume that Theorem~\ref{thm:main} is false, let $K\ge1$ be as in Lemma~\ref{lem:K} and let $y_n \in \R^2$, $\rho_n \in (0, \infty)$,  $\eps_n \to 0$, $0 < \sigma_n < \tau_n<T_+$ with $\sigma_n, \tau_n \to T_+$, and $\eta>0$ be a choice of parameters given by Definition~\ref{def:K}, i.e., $[\sigma_n, \tau_n] \in  \calC_K(y_n, \rho_n, \eps_n, \eta)$. 

\begin{lem}[Length of a collision interval]\label{lem:collision-duration} There exists $\eta_0>0$ sufficiently small so that  for each $\eta \in (0, \eta_0]$ there exists $\eps>0$ and $c_0>0$ with the following properties. Let 
$[\sigma, \tau] \subset [\sigma_n, \tau_n]$ be any subinterval such that, 
\EQ{
\bs \de( u( \sigma); D( y_n, \rho_n)) \le \eps, \mand \bs \de( u( \tau); D( y_n, \rho_n)) \ge \eta, 
}
and let $\om \in \Sp^2$ and $ \om_1, \dots, \om_M$ be any collection of non-constant harmonic maps, and $\vec \nu = (\nu, \nu_{1}, \dots, \nu_{M}), \vec \xi = (\xi,  \xi_{1}, \dots, \xi_{M}) \in (0, \infty)^{M+1}$ any admissible vectors in the sense of Definition~\ref{def:d} such that, 
\EQ{
\eps \le \bfd( u( \sigma), \calQ(\bs \om); D( y_n, \rho_n); \vec \nu, \vec \xi) \le 2 \eps.  
}
Then 
\EQ{
\tau - \sigma  \ge c_0 \max_{j \in \{1, \dots, M\}}  \lam(\om_j)^2. 
}

\end{lem}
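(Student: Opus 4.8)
The plan is to argue by contradiction, the two crucial ingredients being the minimality of $K$ and the quantization $E(\om)\in4\pi\N$ of harmonic-map energies. Suppose the statement fails; shrinking $\eta$ if necessary, we fix $\eta>0$ small such that for every $\eps>0$ and every $c_0>0$ there are an index $n$, a subinterval $[\sigma,\tau]\subseteq[\sigma_n,\tau_n]$ of the fixed collision interval, a constant $\om\in\Sp^2$, non-constant harmonic maps $\om_1,\dots,\om_M$ and admissible $\vec\nu,\vec\xi$ with $\bs\de(u(\sigma);D(y_n,\rho_n))\le\eps$, $\bs\de(u(\tau);D(y_n,\rho_n))\ge\eta$, $\eps\le\bfd(u(\sigma),\calQ(\bs\om);D(y_n,\rho_n);\vec\nu,\vec\xi)\le2\eps$, and yet $\tau-\sigma<c_0\lam_{\max}^2$, where $\lam_{\max}:=\max_j\lam(\om_j)$. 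Taking $\eps=\eps_m\downarrow0$ and $c_0=c_{0,m}\downarrow0$ and relabeling by $m$, I obtain subintervals $[\sigma_m,\tau_m]$, discs $D(y_m,\rho_m)$, configurations $\calQ(\bs\om^{(m)})$ with $M_m$ bubbles and largest scale $\lam_m$, such that $\theta_m:=(\tau_m-\sigma_m)/\lam_m^2\le c_{0,m}\to0$, $\bfd(u(\sigma_m),\calQ(\bs\om^{(m)});\cdots)\in[\eps_m,2\eps_m]$, and $\bs\de(u(\tau_m);D(y_m,\rho_m))\ge\eta$.

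I would then rescale so the largest bubble has unit scale: set $v_m(t,x):=u(\sigma_m+\lam_m^2t,\,y_m+\lam_m x)$, a solution of \eqref{eq:hmhf} on $[0,\theta_m]$ with $\sup_m E(v_m(0))\le E(u_0)$, and write $\ti\rho_m:=\rho_m/\lam_m$. The bound $\bfd\le2\eps_m$ forces $\ti\rho_m\to\infty$ (through the terms $\xi/\rho$ and $\lam(\om_j)/\dist(a(\om_j),\p D(y,\xi))$), and scale and translation invariance of $\bs\de$ give $\bs\de(v_m(0);D(0,\ti\rho_m))\to0$ while $\bs\de(v_m(\theta_m);D(0,\ti\rho_m))\ge\eta$. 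After passing to a subsequence, $M_m\equiv M\le K$, and each rescaled scale $\lam(\om_j^{(m)})/\lam_m$ tends to some $\ell_j\in[0,1]$ with $\max_j\ell_j=1$; I call the $j$-th bubble \emph{macroscopic} if $\ell_j>0$ and \emph{microscopic} if $\ell_j=0$, noting that there is always at least one macroscopic bubble. Adapting the proof of Lemma~\ref{lem:mb-energy} (which uses only the uniform exterior decay of Lemma~\ref{lem:decay} and the separation encoded in $\bfd$), together with Lemma~\ref{lem:Struwe} to transfer the energy from the time appearing in property~(4) of Definition~\ref{def:K} to $\sigma_m$, yields $\sum_jE(\om_j^{(m)})\to4\pi K$, so that for large $m$ this sum equals $4\pi K$ exactly.

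The heart of the argument is that the macroscopic bubbles and the body constant $\om^{(m)}$ are \emph{frozen} on $[0,\theta_m]$: since $\theta_m\to0$ while the macroscopic bubbles have scale $\simeq1$ and are separated from one another and from their neck annuli, the local energy inequality (Lemma~\ref{lem:Struwe}), the local energy stability (Lemma~\ref{lem:Struwe-lwp}) and the local $L^\infty$ propagation (Lemma~\ref{lem:sup}, both the disc and the punctured-disc versions, excising around the microscopic bubbles discs of some rescaled radius $r_m$ with $\sqrt{\theta_m}\ll r_m\ll1$) show that $v_m(\theta_m)$ is still $o_m(1)$-close -- in energy on the necks and in $L^\infty$ off those excised discs -- to the configuration made of $\om^{(m)}$ and the macroscopic bubbles alone. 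In particular, were there no microscopic bubble, $\bs\de(v_m(\theta_m);D(0,\ti\rho_m))$ would tend to $0$, contradicting $\ge\eta$ once $\eta\le\eta_0$; hence there is at least one microscopic bubble (so $K\ge2$), and the entire $\eta$-discrepancy at time $\theta_m$ is produced inside the excised discs. Choosing $r_m$ by a pigeonhole over the finitely many microscopic scales and centers so that the microscopic bubbles organize into clusters of diameter $\ll r_m$ which are pairwise $\gg r_m$ apart, at least one cluster-disc $D(z_m,r_m)$ carries a genuine obstruction: there $v_m(0)$ is $o_m(1)$-close to a multi-bubble configuration with $E(v_m(0);D(z_m,r_m))\to4\pi K'$ for some $1\le K'\le K-1$ (the macroscopic bubbles, of total energy $\ge4\pi$, being excluded), while $v_m(\tau_m)$ is $\eta'$-far there from every multi-bubble, for a fixed $\eta'>0$. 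Undoing the rescaling, the discs $D(y_m+\lam_m z_m,\,\lam_m r_m)$ together with the intervals $[\sigma_m,\tau_m]$ -- after replacing $\tau_m$ by the first time the localized proximity on the smaller disc reaches $\eta'$, and passing to a further subsequence -- fulfill all four conditions of Definition~\ref{def:K} for the value $K'$: (4) is the energy computation just made, (3) holds because $(\tau_m-\sigma_m)/(\lam_m r_m)^2=\theta_m/r_m^2\to0$ since $r_m\gg\sqrt{\theta_m}$, and (1)--(2) follow by localizing $\bfd$ and $\bs\de$ to the smaller disc, which lies inside a genuine neck of the configuration. Since $K'<K$, this contradicts the minimality of $K$.

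The step I expect to be the main obstacle is precisely this localization: showing that, with the macroscopic structure and the body frozen, the growth of $\bs\de$ on the large disc $D(0,\ti\rho_m)$ forces the growth of the \emph{localized} distance $\bfd$ attached to a cleanly chosen sub-disc $D(z_m,r_m)$ whose rescaled radius lies strictly between the microscopic cluster scale and $1$ -- so that, simultaneously, the offending cluster is captured (property~(2)), the macroscopic bubbles are excluded so that the energy drops by a full quantum $4\pi$ (property~(4)), and the interval remains short relative to $\lam_m r_m$ (property~(3)) -- and that a multi-bubble configuration fitting $v_m(\tau_m)$ on such a sub-disc can be glued back with the frozen macroscopic data without degrading the distance estimate. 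Making these constraints mutually compatible is exactly where energy quantization (which guarantees the definite gap $K'\le K-1$) is indispensable, and where the hierarchy of microscopic scales must be organized carefully, e.g.\ by an iterated extraction over the bubble tree.
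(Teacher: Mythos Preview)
Your strategy is the paper's strategy: argue by contradiction, freeze the top-scale structure across the short interval, localize the obstruction to a sub-disc carrying strictly less energy, and contradict the minimality of $K$. You have also correctly singled out the gluing/localization step as the crux. There are, however, two linked technical gaps that the paper's proof does address and that your outline does not.

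First, the freezing step. You invoke Lemmas~\ref{lem:Struwe-lwp} and~\ref{lem:sup} with the \emph{varying} harmonic maps $\om_{j}^{(m)}$ as comparison profiles. As stated (and as proved), those lemmas take a \emph{fixed} harmonic map $\om$; the proof uses quantities such as $\|\nabla\om\|_{L^4}$ and $\|\phi\,\nabla(\om|\nabla\om|^2)\|_{L^2}$, which are not uniformly bounded for a sequence $\om_{j}^{(m)}$ of bounded energy if that sequence concentrates internally. Nothing prevents a ``macroscopic'' $\om_{j_0}^{(m)}$ (scale $\simeq\lambda_m$) from bubbling at scales $\ll\lambda_m$. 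The paper handles this by first applying the Compactness Lemma (Theorem~\ref{lem:compact}) to each sequence $\{\om_{j,n}\}$ of harmonic maps, extracting \emph{fixed} body maps $\om_{j,0}$ and internal bubbles $\theta_{j,k}$; the freezing is then done against the fixed $\om_{j,0}$, and the internal bubble centers are added to the list of points around which one excises. Your macroscopic/microscopic dichotomy sees only the bubbles of the configuration $\calQ(\bs\om^{(m)})$, not the possible internal concentration of its constituents.

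Second, property~(1) on the sub-disc. You propose to verify $\bs\de(u(\sigma_m);D(z_m,r_m\lambda_m))\to0$ by ``localizing $\bfd$,'' asserting the sub-disc sits in a genuine neck. But the neck condition in Definition~\ref{def:d} demands $L^\infty$-closeness of $u(\sigma_m)$ to a \emph{constant} on an annulus about $D(z_m,r_m\lambda_m)$; if $z_m$ lies inside a macroscopic $\om_{j_0}^{(m)}$ that concentrates there, this fails. The paper sidesteps this entirely: rather than localizing at $s_n$, it manufactures a new time $\tilde\sigma_n<t_n$ with $t_n-\tilde\sigma_n\ll r_n^2$ and $r_n\|\calT(u(\tilde\sigma_n))\|_{L^2}\to0$ (from \eqref{eq:tension-L2}), and then applies the Compactness Lemma directly at scale $r_n$ to obtain $\bs\de(u(\tilde\sigma_n);D(x_{\ell_1,n},r_n))\to0$ and the exact quantization needed for property~(4). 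Your closing remark about ``iterated extraction over the bubble tree'' gestures toward what is missing; concretely, what is needed is the body-map extraction for the $\om_{j,n}$ together with either that extraction carried down to the sub-disc or the paper's new-time trick.
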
 

\begin{cor}\label{cor:sn} Let $\eta_0>0$ be as in Lemma~\ref{lem:collision-duration},  $\eta \in (0, \eta_0]$, and $[\sigma_n, \tau_n] \in \calC_K(y_n, \rho_n, \eps_n, \eta)$. Then, there exist $\eps \in (0, \eta)$,  $c_0>0$ $n_0 \in \N$,  and $s_n \in (\sigma_n, \tau_n)$ such that for all $n \ge n_0$, the following conclusions hold.  First, 
\EQ{
\bs \de( u(s_n); D(y_n, \rho_n)) = \eps.
}
Moreover, for each $n  \ge n_0$ let $M_n \in \N$,  and $\calQ(\bs \om_n)$ be any sequence of $M_n$-bubble configurations, and let $\vec \nu_n = (\nu_{n}, \nu_{1, n}, \dots, \nu_{M, n}), \vec \xi_n = (\xi_{n}, \xi_{1, n}, \dots, \xi_{M, n}) \in (0, \infty)^{M+1}$ be any admissible sequences in the sense of Definition~\ref{def:d}  such that
\EQ{
\eps \le \bfd( u(s_n), \calQ( \bs \om_{n}); D(y_n, \rho_n), \vec \nu_n, \vec \xi_n) \le 2 \eps 
}
for each $n$, and define 
\EQ{
\lam_{\max, n}=  \lambda_{\max}(s_n):= \max_{j =1, \dots, M_n} \lam( \om_{j, n}). 
}
Then, $s_n +  c_0 \lam_{\max}(s_n)^2 \le  \tau_n$ and, 
\EQ{ \label{eq:d>eps} 
\bs \de( u(t); D(y_n, \rho_n)) \ge \eps, \quad \forall \, \, t \in [s_n, s_n + c_0 \lam_{\max}(s_n)^2].
}
\end{cor}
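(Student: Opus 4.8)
The plan is to obtain $s_n$ from a last-exit-time construction and then read off the two conclusions directly from Lemma~\ref{lem:collision-duration}, which carries all the real content. First I would let $\eps, c_0 > 0$ be the constants produced by Lemma~\ref{lem:collision-duration} for the given $\eta \in (0,\eta_0]$ (decreasing $\eps$ if necessary so that $\eps < \eta$), and pick $n_0$ so large that $\eps_n < \eps$ for all $n \ge n_0$ — possible since $\eps_n \to 0$ — and that $n \ge n_0$ falls in the range where all the preliminary lemmas apply. The one ingredient beyond Lemma~\ref{lem:collision-duration} that I would need is the continuity of $t \mapsto \bs\de(u(t); D(y_n, \rho_n))$ on $[\sigma_n,\tau_n]$. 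This is a consequence of the smoothness of $u$ on $(0,T_+)$ — under which the localized energy on $D(y_n,\rho_n)$ and the localized $L^\infty$-quantities entering Definition~\ref{def:d} depend continuously on $t$ for each fixed multi-bubble configuration — together with the uniform bound $\sup_t E(u(t)) \le E(u_0)$ and the decay estimate of Lemma~\ref{lem:decay}, which keep the near-minimizing configurations in a compact family and thereby promote the (automatic) upper semicontinuity of the infimum $\bs\de$ to genuine continuity.

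With that in hand I would set
\[
s_n := \sup\bigl\{\, t \in [\sigma_n, \tau_n] : \bs\de\bigl(u(t); D(y_n, \rho_n)\bigr) \le \eps \,\bigr\}.
\]
For $n \ge n_0$ we have $\bs\de(u(\sigma_n); D(y_n,\rho_n)) \le \eps_n < \eps$, so $\sigma_n$ — and, by continuity, a whole right-neighborhood of it — lies in the set above, giving $s_n > \sigma_n$; and $\bs\de(u(\tau_n); D(y_n,\rho_n)) \ge \eta > \eps$ by property (2) of Definition~\ref{def:K}, so by continuity $\bs\de(u(t); D(y_n,\rho_n)) > \eps$ on a left-neighborhood of $\tau_n$, giving $s_n < \tau_n$. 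Thus $s_n \in (\sigma_n,\tau_n)$, and, letting $t$ approach $s_n$ from the left through the set and from the right through its complement, continuity forces $\bs\de(u(s_n); D(y_n,\rho_n)) = \eps$. The defining property of the supremum moreover gives the one-sided bound $\bs\de(u(t); D(y_n,\rho_n)) \ge \eps$ for every $t \in [s_n, \tau_n]$, which is what I will use at the end.

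Finally, I would fix $n \ge n_0$ together with any $M_n$, $\calQ(\bs\om_n)$, and admissible $\vec\nu_n, \vec\xi_n$ as in the statement, so that $\eps \le \bfd(u(s_n), \calQ(\bs\om_n); D(y_n,\rho_n); \vec\nu_n, \vec\xi_n) \le 2\eps$, and apply Lemma~\ref{lem:collision-duration} to the subinterval $[s_n,\tau_n] \subset [\sigma_n,\tau_n]$. Its hypotheses are met: $\bs\de(u(s_n); D(y_n,\rho_n)) = \eps$, $\bs\de(u(\tau_n); D(y_n,\rho_n)) \ge \eta$, and $\calQ(\bs\om_n)$ together with $\vec\nu_n, \vec\xi_n$ satisfies exactly the required two-sided bound on $\bfd$. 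The lemma then yields $\tau_n - s_n \ge c_0 \max_{j} \lam(\om_{j,n})^2 = c_0\, \lam_{\max}(s_n)^2$, i.e.\ $s_n + c_0\lam_{\max}(s_n)^2 \le \tau_n$, so that $[s_n, s_n + c_0\lam_{\max}(s_n)^2] \subseteq [s_n,\tau_n]$; combining with the one-sided bound from the previous paragraph gives $\bs\de(u(t); D(y_n,\rho_n)) \ge \eps$ on this interval, which is~\eqref{eq:d>eps}. These conclusions are uniform over the admissible choices of $(\calQ(\bs\om_n),\vec\nu_n,\vec\xi_n)$, since the constant $c_0$ from Lemma~\ref{lem:collision-duration} depends only on $\eta$, and non-vacuous since $\bs\de(u(s_n); D(y_n,\rho_n)) = \eps$ is attained as an infimum over exactly such configurations.

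The main obstacle is not in the corollary itself — where the argument is just the last-exit construction plus a citation of Lemma~\ref{lem:collision-duration} (which is where the minimality of $K$ is exploited) — but in the auxiliary continuity of $t \mapsto \bs\de(u(t); D(y_n,\rho_n))$: one must rule out degeneration of the bubble scales in near-minimizing configurations, which is precisely where the uniform energy bound and Lemma~\ref{lem:decay} come in.
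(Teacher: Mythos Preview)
Your proof is correct and follows essentially the same approach as the paper: both define $s_n$ as the last time in $[\sigma_n,\tau_n]$ at which $\bs\de(u(t);D(y_n,\rho_n))$ equals $\eps$ (you via a $\sup$, the paper via the equivalent $\inf\{t:\bs\de(u(\tau);D(y_n,\rho_n))\ge\eps\ \forall\,\tau\in[t,\tau_n]\}$), then invoke Lemma~\ref{lem:collision-duration} on $[s_n,\tau_n]$ to get $\tau_n-s_n\ge c_0\lam_{\max}(s_n)^2$, with \eqref{eq:d>eps} following from the defining property of $s_n$. You are more explicit than the paper about the continuity of $t\mapsto\bs\de(u(t);D(y_n,\rho_n))$, which the paper uses tacitly.
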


We make the following definitions. 
\begin{defn} 
We say that two triples $(\om_j, a_{ j, n}, \lam_{j, n})$ and $(\om_{j'}, a_{ j', n}, \lam_{j', n})$ where $\om_j, \om_{j'}$ are nontrivial harmonic maps, $a_{ j, n}, a_{ j', n} \in \R^2$ are sequences of vectors in $\R^2$, and $\lam_{ j, n}, \lam_{ j', n} \in (0, \infty)$ are sequences of scales, are \emph{asymptotically orthogonal}  if 
\EQ{
\lim_{n \to \infty} \Big( \frac{\lam_{ j, n}}{\lam_{ j', n}} + \frac{\lam_{ j', n}}{\lam_{ j, n}} + \frac{\abs{ a_{ j, n} - a_{ j', n}}^2}{\lam_{ j, n} \lam_{ j', n}} \Big) = \infty. 
}
\end{defn} 

\begin{defn} \label{def:tree} 
We say that a sequence of nontrivial harmonic maps $ \mathfrak{h} = \{\om_{n}\}_{n =1}^\infty$ is a \emph{descendant} sequence of an \emph{ancestor} sequence of harmonic maps $\mathfrak{H} = \{\Om_{n}\}_{n =1}^\infty$ if $\frac{\lam(\Om_{n})}{\lam(\om_{n})} \to \infty$ and there exists a constant $C>0$ so that the discs $D( a(\om_{n}), \lam(\om_{n})) \subset D(a(\Om_{n}), C \lam(\Om_{ n}))$ for all sufficiently large $n$. We denote this relation by $\{\omega_n\}\prec \{\Omega_n\}$, and $\{\omega_n\}\preceq \{\Omega_n\}$ allows for equality. 
Given a natural number $M$, a collection of sequences of harmonic maps $(\frakh_1, \dots, \frakh_M) = ( \{\om_{1, n}\}_{n =1}^\infty, \dots, \{\om_{1, n}\}_{n =1}^\infty)$ with asymptotically orthogonal centers and scales are partially ordered by $\preceq$. The \emph{roots} are defined to be the maximal elements relative to this partial order. In other words, a sequence of harmonic maps $\frakh_j$ is a root if it is not a descendant sequence of any ancestor sequence $\frakh_{j'}$ for any $j' \in \{1, \dots, M\}$. We denote by 
\EQ{
\calR:= \{ j \in \{1, \dots, M\} \mid \frakh_j \, \, \textrm{is a root}\}
}
Finally, to each root $\frakh_{j}$ we can associate a bubble tree $\calT(j):= \{ \frakh_{j'} \mid  \frakh_{j'} \preceq \frakh_j\}$. 
\end{defn} 

\begin{proof}[Proof of Lemma~\ref{lem:collision-duration}] 
If the Lemma were false,  we could find 
intervals $[s_n, t_n] \subset [\sigma_n, \tau_n]$ so that, 
\EQ{ \label{eq:sntn} 
\lim_{n \to \infty} \bs \de( u(s_n); D( y_n, \rho_n)) = 0, \quad \lim_{n \to \infty} \bs \de( u(t_n); D( y_n, \rho_n)) >0, 
}
  integers $M_n \ge0$, sequences of $M_n$-bubble configurations $\calQ(\bs \om_n)$, and sequences of vectors $\vec \nu_n = (\nu_n, \nu_{1, n}, \dots, \nu_{M_n, n}) \in (0, \infty)^{M_n+1},  \vec \xi_n = (\xi_n,  \xi_{1, n}, \dots, \xi_{M_n, n}) \in (0, \infty)^{M_n+1}$ such that 
\EQ{ \label{eq:du(s_n)}
\lim_{n \to \infty}\bfd( u( s_n), \calQ( \bs \om_n); D( y_n, \rho_n); \vec\nu_n, \vec \xi_n) = 0, 
}
and so that for $\lam_{\max, n}:= \max_{j =1, \dots, M_n} \lam( \om_{j, n})$, we have, 
\EQ{ \label{eq:st-lam2}
\lim_{n \to \infty} \frac{(t_n - s_n)^{\frac{1}{2}}}{\lam_{\max, n}} = 0 .
}
Passing to a subsequence, we may assume that $M_n = M$ is a fixed integer and $\om_{ n} = \om \in \Sp^2$ is a fixed constant.  

Consider the sequences of harmonic maps, $\frakh_j = \{\om_{j, n}\}_{n =1}^\infty$, for $j = 1, \dots, M$,  together with sequences of centers $a(\om_{j, n})$ and scales $\lam(\om_{j, n})$, and the partial order $\prec$ on $( \frakh_1, \dots, \frakh_M)$ as in Definition~\ref{def:tree}. Using the language of Definition~\ref{def:tree}, we observe that, after passing to a subsequence in $n$, there exists a sequence $\ti R_n \to \infty$ so that for any root sequences  $\frakh_j = \{\om_{j, n}\}_{n =1}^\infty$, $\frakh_{j'} = \{\om_{j, n}\}_{n =1}^\infty$ with $j, j'  \in \mathcal{R}$, the discs $D( a(\om_{j, n}), 4R_n\lam(\om_{j, n}))$ and $D( a(\om_{j', n}), 4R_n\lam(\om_{j', n}))$ are disjoint for each $n$ for any sequence $R_n \le \ti R_n$. 
By Lemma~\ref{lem:decay}, 
\EQ{
\lim_{n \to \infty} E( \om_{j, n}; \R^2 \setminus D( a(\om_{j, n}); 4^{-1} R_n\lam(\om_{j, n})))  = 0
}
for each $j \in \calR$ and for any sequence $R_n \to \infty$, and hence by ~\eqref{eq:du(s_n)}, 
\EQ{ \label{eq:en-outside} 
\lim_{n \to \infty} E( u(s_n); D( y_n, \rho_n) \setminus \bigcup_{ j \in \calR} D( a( \om_{j, n}), 4^{-1}R_n \lam(\om_{j, n}))) = 0,
}
for any sequence $R_n \to \infty$. 

Each of the sequences $\{\om_{j, n}\}_{n =1}^\infty$ for $j \in \{1, \dots, M\}$ satisfies the hypothesis of the Compactness Lemma~\ref{lem:compact} (noting that $\calT(\om_{j, n}) = 0$ since $\om_{j, n}$ is harmonic), and passing to a (joint) subsequence we can find  non-negative integers $M_j$, 
a sequence $\breve R_n \le \ti R_n$ with $1 \ll \breve R_n \ll \xi_n \lam_{\max, n}^{-1}$, 
harmonic maps $\om_{j, 0}$ (possibly constant), non-trivial harmonic maps $\theta_{j, k}$, scales $\mu_{j, k, n} \ll  \lam(\om_{j, n})$ and centers $b_{j, k, n}\in D(a(\om_{ j, n}),C \lam(\om_{ j, n}))$ for each $j$ and where $k \in \{1, \dots, M_j\}$, satisfying~\eqref{eq:w22-body},~\eqref{eq:w22-bubbles}, and so that   
\EQ{ \label{eq:Qing-bub} 
&\lim_{n \to \infty} \Bigg[E\Big(  \omega_{ j, n} - \om_{j, 0}\Big( \frac{ \cdot - a( \om_{j, n})}{\lam(\om_{j, n})}\Big)- \sum_{k =1}^{M_j} \big(\theta_{j, k}\Big( \frac{ \cdot - b_{j, k, n}}{ \mu_{j, k, n}} \Big) - \theta_{j, k}( \infty)\big) ; D_{j,n} \Big)   \\
& + \Big\|  \omega_{ j, n} - \om_{j, 0}\Big( \frac{ \cdot - a( \om_{j, n})}{\lam(\om_{j, n})}\Big)- \sum_{k =1}^{M_j} \big(\theta_{j, k}\Big( \frac{ \cdot - b_{j, k, n}}{ \mu_{j, k, n}} \Big) - \theta_{j, k}( \infty)\big)\Big\|_{L^\infty( D_{j,n})} \\
& + 
\sum_{k \neq k'}  \Big( \frac{ \mu_{ j, k, n}}{\mu_{j, k', n}} + \frac{ \mu_{ j, k', n}}{\mu_{ j, k, n}} + \frac{| b_{ j, k, n} - b_{ j, k', n} |^{2}}{\mu_{ j, k, n} \mu_{j, k', n}} \Big)^{-1} +  \sum_{k =1}^{M_j} \frac{ \mu_{j, k, n}}{\dist( b_{j, k, n}, \partial D(a(\om_{ j, n}),C \lam(\om_{ j, n}))} \Bigg]= 0. 
}
where $D_{j,n}:=D(a(\om_{ j, n}), 4 R_n \lam(\om_{ j, n}))$, $C>0$ is some finite constant, and $R_n$ is a sequence, to be fixed below,  such that $1 \ll R_n \le \breve R_n$ . 
In this decomposition, we distinguish the (possibly constant) harmonic maps $\om_{j, 0}$, which arise as the weak limits $\om_{j, n}\big( \lam(\om_{j, n})( \cdot + a( \om_{j, n}))\big) \rightharpoonup\om_{j, 0}$, and we call these the body maps associated to the sequence $\frakh_j = \{\om_{j, n}\}_{n =1}^\infty$. 

Define the set of indices
\EQ{
\calJ_{\max}:= \Big\{ j \in \{1, \dots, M\} \mid C_j^{-1} \le \frac{\lam_{\max, n}}{\lam( \om_{j, n})} \le C_j,  \,\,  \textrm{for each}\,\, n\, \,  \textrm{for some} \, \, C_j >1\Big\}
}
and let
\EQ{ \label{eq:K0-def} 
4 \pi K_0 := \sum_{j \in \calJ_{\max}} E( \om_{j, 0}). 
}
That is, $4 \pi K_0$ is the sum of the energies of the body maps associated to the $\om_{j, n}$ arising from indices $j \in \calJ_{\max}$.  Note that $\calJ_{\max}$ is a (possibly strict) subset of the set of indices $\calR$ associated to the roots.

\textbf{Case 1}: First suppose that $K_0 = K$, which means that $\calJ_{\max} = \calR =  \{1, \dots, M\}$ and all of the energy in $D( y_n, \rho_n)$ is captured by the body maps. In this case, the sequences $\{\om_{j, n}\}$ have no concentrating bubbles, i.e., $M_j = 0$ for each $j$,  and we have, 
\EQ{
\lim_{n \to \infty} E\Big(  \omega_{ j, n} - \om_{j, 0}\Big( \frac{ \cdot - a( \om_{j, n})}{\lam(\om_{j, n})}\Big); D(a(\om_{ j, n}), 4R_n \lam(\om_{j, n}))\Big)  = 0 
} 
and, 
\EQ{
\lim_{n \to \infty} \Big\|   \omega_{j,  n} - \om_{j,  0}\Big( \frac{ \cdot - a( \om_{j,  n})}{\lam_{\max, n}}\Big)\Big\|_{L^{\infty}( D(a(\om_{j,  n}), 4R_n \lam_{\max, n}))}  = 0,  
} 
for each $j \in \{1, \dots, M\}$. Using~\eqref{eq:du(s_n)}, the fact that $\lam( \om_{j, n}) \simeq \lam_{\max, n}$ for each $j \in \{1, \dots, M\}$,  and the above we can now fix (for Case 1) a sequence $R_n \le \breve R_n$ so that
\EQ{
\lim_{n \to \infty} E\Big( u(s_n) - \om_{j, 0}\big( \frac{ \cdot - a( \om_{j, n})}{\lam(\om_{j, n})}\big) ; D( a(\om_{j, n}), 4 R_n \lam_{\max, n})\Big) = 0,
}
and, 
\EQ{
\lim_{n \to \infty} \Big\|   u(s_n) - \om_{j,  0}\Big( \frac{ \cdot - a( \om_{j,  n})}{\lam_{\max, n}}\Big)\Big\|_{L^{\infty}( D(a(\om_{j,  n}), 4 R_n \lam_{\max, n}))}  = 0
}
for each $j \in \{1, \dots, M\}$, i.e., we need to additionally ensure that $4R_n \lam_{\max, n} \le \min\{ \nu_{j, n}\}_{j =1}^M$. Using Lemma~\ref{lem:Struwe-lwp} and Lemma~\ref{lem:sup} 
along with the fact that $(t_n - s_n)^{\frac{1}{2}} \ll \lam_{\max, n}$, we can propagate these estimates to time $t_n$, i.e., 
\EQ{\label{eq:t_n-bubbles-en} 
\lim_{n \to \infty} E\Big( u(t_n) - \om_{j, 0}\big( \frac{ \cdot - a( \om_{j, n})}{\lam(\om_{j, n})}\big) ; D( a(\om_{j, n}),  R_n \lam_{\max, n})\Big) = 0,
}
and, 
\EQ{\label{eq:t_n-bubbles-sup}
\lim_{n \to \infty} \Big\|   u(t_n) - \om_{j,  0}\Big( \frac{ \cdot - a( \om_{j,  n})}{\lam_{\max, n}}\Big)\Big\|_{L^{\infty}( D(a(\om_{j,  n}),  R_n \lam_{\max, n}))}  = 0
}
for each $j \in \{1, \dots, M\}$.
Using Lemma~\ref{lem:Struwe},  and that $(t_n - s_n)^{\frac{1}{2}} \ll \lam_{\max, n}$ we can also propagate~\eqref{eq:en-outside} to time $t_n$, deducing 
\EQ{\label{eq:t_n-exterior} 
\lim_{n \to \infty} E\Big( u(t_n); D(y_n, \rho_n) \setminus \bigcup_{j =1}^M D( a(\om_{j, n});  R_n \lambda_{\max, n})\Big) = 0.  
}
Combining~\eqref{eq:t_n-bubbles-en}~\eqref{eq:t_n-bubbles-sup},~\eqref{eq:t_n-exterior}, the disjointness of the discs $D(a(\om_{j, n}), R_n \lam(\om_{j, n}))$, the asymptotic orthogonality of the triples $(\om_{j, 0}, a(\om_{j, n}), \lambda(\om_{j, n}))$,  and Remark~\ref{rem:neck}, we find that 
\EQ{
\lim_{n \to \infty} \bs \de( u(t_n) ; D( y_n, \rho_n)) = 0,
}
which contradicts~\eqref{eq:sntn}.

\textbf{Case 2}: Next, consider the case  $K_0 < K$. We show this case leads to a contradiction with the minimality of $K$. Again we will need $R_n \to \infty$ such that $4R_n \lam_{\max, n} \le  \min \{\nu_{j, n}\}_{j \in \calJ_{\max}} $ and $R_n \le \breve R_n$. 

We claim there exists an integer $L \ge 1$, sequences $\{ x_{\ell, n} \}_{\ell =1}^L$ with $x_{\ell, n} \in D(y_n, \xi_n)$ for each $n$ and each $\ell \in \{1, \dots, L\}$,  and a sequence $r_n$ such that 
\EQ{
(t_n - s_n)^{\frac{1}{2}}  \ll r_n \ll  \lam_{\max, n},
}
such that the discs $D(x_{\ell, n}, r_n)$ are disjoint for $\ell \in \{1, \dots, L\}$ and satisfy, 
\EQ{ \label{eq:en-inside} 
\lim_{n \to \infty} E\Big( u(s_n);  \bigcup_{\ell =1}^L D( x_{ \ell, n}, r_n)\Big) = 4\pi K - 4 \pi K_0, 
}
as well as 
\EQ{ \label{eq:rn-sep} 
\lim_{n \to \infty} \frac{| x_{\ell, n} - x_{\ell', n}|}{r_n} = \infty 
}
for $\ell \neq \ell'$, 
and finally such that there exists sequences $\al_n \to 0, \beta_n \to \infty$ so that 
\EQ{ \label{eq:rn-neck} 
\lim_{n \to \infty} \sum_{ \ell = 1}^L E( u(s_n);  D( x_{ \ell, n}, \beta_n r_n) \setminus D( x_{ \ell, n}, \alpha_n r_n)) = 0, 
}
and a sequence $\breve \xi_n$ so that 
\EQ{ \label{eq:inside-xi} 
\xi_n \ll \breve \xi_n \ll \rho_n \mand D(x_{\ell, n}, \be_n r_n) \subset D( y_n, \breve \xi_n). 
}
 
 We construct a set of  sequences $\calP:=\{ \{x_{\ell,n}\}\: :\:1\le\ell\le L \}$ and the radii $\{r_n\}$ as follows.  
 Any root $\frakh_j$ with $j\in\calJ_{\max}$ we call a dominant root.
For any dominant root~$\frakh_{j_0}$ we define 
$\calT(j_0)=\{ \frakh_j\preceq \frakh_{j_0}  \}$ as the bubble tree with root~$\frakh_{j_0}$, and 
$\calD(j_0)$ as the maximal elements of the pruned tree $\calT(j_0)\setminus \{   \frakh_{j_0}  \}$. 

We define ~$\calP_0$ as  the points $y_{ \ell, n}$ for $\ell \in \{1, \dots, L'\}$ as an enumeration of all (i) $a(\om_{ j, n})$ with $\frakh_j \in \calR \setminus \calJ_{\max}$ (i.e., the centers of the roots that are not dominant), (ii) 
$a(\om_{ j, n})$ with $\frakh_j \in \calD(j_0)$ for some $j_0 \in \calJ_{\max, n}$, and (iii) sequences $b_{ j_0, k, n}$ associated to
harmonic maps $\theta_{j_0, k} \big( \frac{ \cdot - b_{j_0, k, n}}{\mu_{ j_0, k, n}}\big)$ for some $j_0 \in \calJ_{\max, n}$ that are 
\begin{itemize}
\item  asymptotically orthogonal to every  $\frakh_j \in \calD(j_0)$
\item   not descendants of any $\frakh_j \in \calD(j_0)$.
\end{itemize}
Passing to a joint subsequence we can assume that the limits 
\EQ{
\lim_{n \to \infty} \frac{(t_n - s_n)^{\frac{1}{2}}}{ \dist( y_{\ell', n}, y_{ \ell, n})}
}
exist in $[0, \infty]$ for all $\ell \neq \ell' \in \{1, \dots, L'\}$. 
We define $\calP$ by means of $\calP_0$ by the following algorithm: we include the sequence $y_{ \ell_0, n}\in\calP_0$ in the set $\calP$ if 
\EQ{
\lim_{n \to \infty} \frac{(t_n - s_n)^{\frac{1}{2}}}{ \dist( y_{ \ell_0, n}, y_{ \ell, n})}  = 0, \quad \forall \ell \in \{1, \dots, L')\}\setminus \ell_0.
}
For those $y_{\ell_0, n}\in\calP_0$ for which the above does not hold we define the sets, 
\EQ{
\calB(\ell_0)  := \Big\{ \ell_0 \, \,  \textrm{and any}\, \,   \ell \, \, \textrm{for which } \, \,  \lim_{n \to \infty} \frac{(t_n - s_n)^{\frac{1}{2}}}{ \dist( y_{ \ell_0, n}, y_{ \ell, n})}  \neq0\Big\}.
}
An index $\ell$ can be in at most one set $\calB(\ell_0)$, i.e., the sets $\calB(\ell) = \calB(\ell')$ if $\ell' \in \calB(\ell)$. For each of the sets $\calB(\ell_0)$ we let, for each $n$,  $x_{ \ell_0, n}$ denote the barycenter of the points $y_{ \ell, n}$ associated to indices $\ell \in \calB(\ell_0)$. We include the points $x_{ \ell_0, n}$ in the set $\calP$. 
This completes the construction of the set $\calP$, which consists of finitely many (say $L \in \N$)  sequences $\{x_{ \ell, n}\}  \subset D(y_n, \xi_n)$ for $\ell \in \{1, \dots, L\}$. 

We choose $r_n$ to be any sequence such that, 
\EQ{  \label{eq:rn} 
(t_n - s_n)^{\frac{1}{2}} &\ll r_n \ll \lam_{\max, n}, \\
 R_n\lam( \om_{j, n}) &\ll r_n \quad \forall\,  j \not \in \calJ_{\max}, \\
 \nu_{j, n} &\ll r_n  \quad \forall j \not \in \calJ_{\max}  \\  
 \max(\mu_{j, k, n},\xi_{j,n})  &\ll r_n \quad \forall  \, j \in \calJ_{\max}, \forall  \, k \in \{1, \dots, M_j\},
}
and such that the discs $D(x_{ \ell, n}, r_n)$ satisfy~\eqref{eq:rn-sep}. In view of the definition of $j_0\in \calJ_{\max}$,
\[
\lam_{\max, n}^{-1} |a(\om_{ j_0, n}) - a(\om_{ j, n})| \to\infty
\]
for all $j\in\calR \setminus \calJ_{\max}$. This ensures that for any $x_{ \ell, n}$, which is one of the sequences $a(\om_{ j, n})$ for $j \in\calR \setminus \calJ_{\max}$, the disc $D(x_{\ell, n}, r_n)$ is separated from any of the discs $D(a(\om_{ j_0, n}),R_n \lam_{\max, n})$ for $j_0 \in \calJ_{\max}$ by an amount $\gg r_n$ (we are free to  take $R_n \to \infty$ to be diverging as slowly as needed).  

We claim that the sequences of discs $D(x_{\ell, n}, r_n)$ with $x_{ \ell, n} \in \calP$ satisfy~\eqref{eq:en-inside}. To see this, first note that for any $j_0 \in \calJ_{\max}$ 
\EQ{ \label{eq:body-max} 
\lim_{n \to \infty} E\Big( u(s_n) - \om_{j_0, 0}\big( \frac{ \cdot - a( \om_{j_0, n})}{\lam(\om_{j_0, n})}\big) ; D( a(\om_{j_0, n}), 4R_n \lam_{\max, n}) \setminus \bigcup_{\ell=1}^{L} D( x_{ \ell,n}, r_n)\Big) = 0
}
which follows from the construction of the set $\{x_{\ell, n}\}_{\ell = 1}^L$, the limit in~\eqref{eq:Qing-bub}, and the choice of $r_n$. Note also that $r_n \ll \lam_{\max, n}$ means that, 
\EQ{ \label{eq:body-in-rn} 
\lim_{n \to \infty} E\Big( \om_{j_0, 0}\big( \frac{ \cdot - a( \om_{j_0, n})}{\lam(\om_{j_0, n})}\big) ; \bigcup_{\ell=1}^{L} D( x_{ \ell, n}, r_n)\Big) = 0. 
}
We can conclude from the above,~\eqref{eq:body-max},~\eqref{eq:K0-def}, and~\eqref{eq:en-outside} that, 
\EQ{ \label{eq:en-outside-rn} 
\lim_{n \to \infty} E\Big( u(s_n); D(y_n, \rho_n) \setminus \bigcup_{\ell=1}^{L} D( x_{\ell,n}, r_n)\Big) = 4 \pi K_0
}
The condition~\eqref{eq:en-inside} follows then from above and the disjointness of the discs $D(x_{\ell,n}, r_n)$. The condition~\eqref{eq:rn-neck} and the existence of the sequence $\breve \xi_n$ as in~\eqref{eq:inside-xi} follows from the construction of the set $\calP$ and the choice of $r_n$.  

We claim that there must exist $\ell_1 \in \{1, \dots, L\}$,  $ \eta_1>0$ so that, up to passing to a subsequence in $n$, we have, 
\EQ{ \label{eq:bad-disc} 
\bs \de( u(t_n); D(x_{\ell_1, n}, r_n))  \ge  \eta_1. 
}
To see this, we argue by contradiction. If~\eqref{eq:bad-disc} fails, then we would have, 
\EQ{ \label{eq:all-good} 
\lim_{n \to \infty} \bs \de( u(t_n); D(x_{\ell, n}, r_n))  = 0, \quad \forall \, \ell  \in \{1, \dots, L\}. 
}
We will use the above to show that 
\EQ{ \label{eq:del-all-good} 
\lim_{n \to \infty} \bs \de( u(t_n); D( y_n, \rho_n)) = 0, 
}
which contradicts~\eqref{eq:sntn}. To start,  $(t_n - s_n)^{\frac{1}{2}} \ll r_n$ means we can use Lemma~\ref{lem:Struwe} and~\eqref{eq:rn-neck} to propagate~\eqref{eq:en-inside},~\eqref{eq:en-outside-rn}, and~\eqref{eq:body-max} to time $t_n$, giving, 
\begin{align}  
\lim_{n \to \infty} E\Big( u(t_n);  \bigcup_{\ell =1}^L D( x_{ \ell, n}, r_n)\Big) &= 4\pi K - 4 \pi K_0, \\
\lim_{n \to \infty} E\Big( u(t_n); D(y_n, \rho_n) \setminus \bigcup_{\ell=1}^{L} D( x_{\ell,n}, r_n)\Big) &= 4 \pi K_0 \\
\lim_{n \to \infty} E\Big( u(t_n) - \om_{j_0, 0}\big( \frac{ \cdot - a( \om_{j_0, n})}{\lam(\om_{j_0, n})}\big) ; D( a(\om_{j_0, n}), R_n \lam_{\max, n}) \setminus \bigcup_{\ell=1}^{L} D( x_{ \ell,n}, r_n)\Big) &= 0,  \label{eq:en-tn} 
\end{align} 
for all $ j_0 \in \calJ_{\max}$
where in the last line we remark that for each $\ell \in \{1, \dots, L\}$ either the disc $D(x_{\ell, n}; r_n)$ is completely contained in $D( a(\om_{j_0, n}), R_n \lam_{\max, n})$ or disjoint from it. 

Next, using $\lam_{\max, n} R_n \le \min\{\nu_{j, n}\}_{j \in \calJ_{\max}}$ and $ \max\{ \nu_{j, n}\}_{j \not \in \calJ_{\max}} \ll r_n$, we see that~\eqref{eq:du(s_n)} can be combined  with the middle line of~\eqref{eq:Qing-bub} to yield,  
\EQ{
\lim_{n \to \infty} \Big\| u(s_n) -  \om_{j_0, 0}\big( \frac{ \cdot - a( \om_{j_0, n})}{\lam(\om_{j_0, n})}\big) \Big\|_{L^\infty( D( a(\om_{j_0, n}), 4R_n \lam_{\max, n}) \setminus \bigcup_{\ell=1}^{L} D( x_{ \ell,n}, 4^{-1}r_n))} = 0
}
for all $j_0 \in \calJ_{\max}$. Since $(t_n - s_n)^{\frac{1}{2}} \ll r_n$, Lemma~\ref{lem:sup},~\eqref{eq:body-max}, and~\eqref{eq:rn-neck} allows us to propagate the above to time $t_n$, yielding, 
\EQ{ \label{eq:sup-body} 
\lim_{n \to \infty} \Big\| u(t_n) -  \om_{j_0, 0}\big( \frac{ \cdot - a( \om_{j_0, n})}{\lam(\om_{j_0, n})}\big) \Big\|_{L^\infty( D( a(\om_{j_0, n}), R_n \lam_{\max, n}) \setminus \bigcup_{\ell=1}^{L} D( x_{ \ell,n}, r_n))} = 0
}

Using again Lemma~\ref{lem:Struwe-lwp}  and~\eqref{eq:en-outside}, the construction of the sequences $\{ x_{\ell, n}\}$ and the choice of $\lam_{\max, n} \gg r_n \gg (t_n - s_n)^{\frac{1}{2}}$ as well as $r_n\gg R_n\lambda(\omega_{j,n})$ for all $j\not\in\calJ_{\max}$ we have, 
\EQ{ \label{eq:en-outside-all} 
\lim_{n \to \infty} E\Big( u(t_n); D( y_n, \rho_n) \setminus \Big[ \bigcup_{j \in \calJ_{\max}} D( a(\om_{j, n}), R_n \lam_{\max, n}) \cup \bigcup_{ \ell  =1}^L D( x_{\ell, n}; r_n) \Big] \Big)  = 0.
}
Now, by~\eqref{eq:all-good}, after passing to a joint subsequence in $n$, for each $\ell \in \{1, \dots, L\}$ we can find an integer $\ti M_\ell \ge 0$, a sequence of $\ti M_{\ell}$-bubble configurations $\calQ(\bs \Om_{\ell, n})$, 
 and sequences of vectors $\vec \nu_{\ell, n} = (\nu_{\ell, n}, \nu_{ \ell,1,  n}, \dots, \nu_{\ell, \ti M_{\ell}, n})$ and $\vec \xi_{\ell, n} = ( \xi_{\ell, n}, \xi_{ \ell,1,  n}, \dots, \xi_{\ell, \ti M_{\ell}, n})$,  so that 
\EQ{  \label{eq:d-inside-rn} 
\lim_{n \to \infty} \bfd( u(t_n), \calQ( \bs \Om_{\ell, n}); D( x_{\ell, n}, r_n); \vec \nu_{\ell, n}, \vec \xi_{\ell, n}) = 0. 
}
Here $\bs\Om_{\ell, n}=(\Om_{\ell, n},\Om_{\ell,1, n},\ldots,\Om_{\ell, \ti M_{\ell}, n})$. 
Dropping the constants $\Omega_{ \ell,n} \in \Sp^2$ in the $\ti M_{\ell}$-bubble configurations, consider finally the sequence (in $n$) of multi-bubbles formed by the constant $\omega \in \Sp^2$, 
 and the harmonic maps
\EQ{
\{ \Om_{\ell, k, n}\}_{\ell =1, k =1}^{\ell = L, k = \ti M_{\ell}}, \, \{\om_{j, 0, n}\}_{j \in \calJ_{\max}}:=  \Big\{ \om_{j, 0}( \frac{\cdot - a(\om_{j, n})}{ \lam( \om_{j, n})}) \Big\}_{j \in \calJ_{\max}}.
}
For each $j \in \calJ_{\max}$ we define $\nu_{j, n} := R_n$ and $\xi_{j, n} = r_n$ and then defining $$\vec{\ti\nu}_n:= (\nu_n,  (\nu_{\ell, n})_{\ell =1}^L, ( \nu_{j, n} )_{j \in \calJ_{\max}}),\qquad \vec{\ti \xi}_n := ( \breve \xi_n,  (\xi_{\ell, n})_{\ell =1}^L, ( \xi_{j, n} )_{j \in \calJ_{\max}})$$ we claim that 
\EQ{ \label{eq:d-to-zero-all} 
\lim_{n \to \infty} \bfd\big( u(t_n), \calQ(  \om,( \Om_{\ell, k, n})_{\ell =1, k =1}^{\ell = L, k = \ti M_{\ell}},  ( \om_{j, 0, n})_{j \in \calJ_{\max}}
); D( y_n, \rho_n); \vec{\ti \nu}_n, \vec{\ti \xi}_n\big)  = 0,
}
which would yield~\eqref{eq:del-all-good}. Indeed, by~\eqref{eq:d-inside-rn} and since all of the $D( x_{\ell, n}, r_n)$ are disjoint and satisfy~\eqref{eq:rn-sep}, any distinct triples $( \Om_{\ell, k, n}, a( \Om_{\ell, k, n}), \lam( \Om_{\ell, k, n}))$ and $( \Om_{\ell', k', n}, a( \Om_{\ell', k', n}), \lam( \Om_{\ell', k', n}))$ are asymptotically orthogonal for $(\ell, k) \neq ( \ell', k')$. Moreover, for any $\ell$ and $j_0 \in \calJ_{\max}$ for which $D( x_{\ell, n}, r_n) \subset D( a(\om_{j_0, n}), R_n \lam(\om_{j_0, n}))$, the triples $$( \Om_{\ell, k, n}, a( \Om_{\ell, k, n}), \lam( \Om_{\ell, k, n}))\text{\ \  \ and\ \ \ } \Big(\om_{j_0, 0}( \frac{\cdot - a(\om_{j_0, n})}{ \lam( \om_{j_0, n})}),a(\om_{j_0, n}),  \lam( \om_{j_0, n})\Big)$$ are asymptotically orthogonal since $r_n \ll \lam_{\max, n}$.  Indeed, 
\EQ{
\lim_{n \to \infty} E\Big( \om_{j_0, 0}\big( \frac{ \cdot - a( \om_{j_0, n})}{\lam(\om_{j_0, n})}\big) ; D( x_{\ell, n}, r_n)\Big) = 0 , \quad \forall j_0 \in \calJ_{\max}, \quad \forall  \ell\in  \{1, \dots, L\}
}
and 
\EQ{
\lim_{n \to \infty} E( \Om_{\ell, k, n}; D(y_n, \rho_n) \setminus D( x_{\ell, n}, r_n))  = 0, \quad \forall \, \ell \in \{1, \dots, L\}, \, k \in \{1, \dots, \ti M_{\ell}\}.
}
These observations, together with~\eqref{eq:en-tn},~\eqref{eq:sup-body}, the estimate~\eqref{eq:en-outside-all}, and Remark~\ref{rem:neck} (using now $\breve \xi_n$ instead of $\xi_n$),  yield~\eqref{eq:d-to-zero-all}. This completes the proof of~\eqref{eq:bad-disc}.

Having established~\eqref{eq:bad-disc} we claim that there exist times $\ti \sigma_n < t_n$ so that 
\EQ{ \label{eq:tisigma} 
t_n - \ti \sigma_n \ll r_n^2  \mand \lim_{n \to \infty} r_n \| \calT( u(\ti \sigma_n)) \|_{L^2}  = 0 .
} 
If not, we could find $c, c_1>0$ and a subsequence of the $t_n$ for which, 
\EQ{
r_n^2 \| \calT( u(t)) \|_{L^2}^2 \ge c_1 \quad \forall t \in [t_n - c r_n^2, t_n]. 
}
But then we would have, 
\EQ{
\sum_n \int_{t_n - c r_n^2}^{t_n} \| \calT( u(t)) \|_{L^2}^2 \, \ud t \ge c_1 \sum_n \int_{t_n - c r_n^2}^{t_n} r_n^{-2}  \, \ud t \ge c c_1 \sum_n 1 = \infty,
}
which contradicts~\eqref{eq:tension-L2}. Given the sequence $\ti \sigma_n$ as in~\eqref{eq:tisigma} we can apply the Compactness Lemma~\ref{lem:compact}, so that after passing to a subsequence in $n$ (still denoted by $\ti \sigma_n, t_n$), we have a bubble decomposition as in~\eqref{eq:Qing} 
for some sequence $\hat R_n \to \infty$. The estimate~\eqref{eq:rn-neck} can be propagated to time $\ti \sigma_n$ using Lemma~\ref{lem:Struwe}, which gives, 
\EQ{ \label{eq:no-neck-tis} 
\lim_{n \to \infty} E\big( u(\ti \sigma_n); D( x_{\ell, n}; 2^{-1} \beta_n r_n) \setminus D(x_{\ell, n}; 2 \al_n r_n)\big) = 0.
}
The above ensures that the harmonic map in~\eqref{eq:Qing} at scale $r_n$ must be constant which we denote by $\ti \om \in \Sp^2$, and so we can conclude that in fact, 
\EQ{ \label{eq:prop1} 
\lim_{n \to \infty} \bs \de ( u( \ti \sigma_n); D( x_{\ell_1, n},  r_n) )  = 0,
}
By~\eqref{eq:en-quant} we can find an integer $K_1 \ge 0$ so that, 
\EQ{ \label{eq:prop5} 
E( u(\ti \sigma_n); D( x_{\ell_1, n}, r_n)) \to 4 \pi K_1 \mas n \to \infty. 
}
Because of~\eqref{eq:bad-disc} we must have $K_1 \ge 1$ (since $t_n -\ti \sigma_n \ll r_n^2$). 
 
Consider the time intervals $[\ti \sigma_n, t_n]$ and the discs $D(x_{\ell_1, n}; r_n)$. Property (1) from Definition~\ref{def:K} is given by the first line in~\eqref{eq:prop1}.  Property (2) is given by~\eqref{eq:bad-disc}. Property (3) is satisfied  because of the first estimate in~\eqref{eq:tisigma}, and property (4) because of
~\eqref{eq:prop5}.

 Lastly,  we claim that $K_1 < K$. This is clear if $K_0>0$ since in that case some energy lies at the scale $\simeq \lam_{\max, n} \gg r_n$. 
 If $K_0=0$ and $K_1 = K$ then all  of the energy  in the larger discs $D(y_n, \rho_n)$ would be captured within the sequence of discs $D(x_{\ell_1, n}, r_n)$. On the other hand, recall that there is at least one index $j_0$ such that 
 $\lam(\om_{j_0, n}) = \lam_{\max, n}$ and we have chosen $r_n$ so that $r_n \ll \lam_{\max, n}= \lam(\om_{j_0, n})$, which (by Definition~\ref{def:scale}) implies at least $3\pi$ in energy concentrates outside the discs $D(x_{\ell_1, n}, r_n)$, a contradiction.

We conclude that $K_1< K$ and that $[\ti \sigma_n, t_n] \in \calC_{K_1}( x_{\ell_1, n}, r_n,  \eps_{1, n},  \eta_1)$ for some sequence $\eps_{1, n} \to 0$,  contradicting the minimality of $K$. This completes the proof. 
\end{proof} 

\begin{proof}[Proof of Corollary~\ref{cor:sn}]  Let $\eta_0$ be as in Lemma~\ref{lem:collision-duration} and fix an $\eta \in (0, \eta_0]$. Let $\eps>0$ be given by Lemma~\ref{lem:collision-duration} and define $s_n$ by, 
\EQ{
s_n := \inf\{ t \in [\sigma_n, \tau_n] \mid \bs \de( u(\tau); D(y_n, \rho_n)) \ge \eps, \quad \forall \tau \in [t, \tau_n]\}, 
}
which is well-defined for all sufficiently large $n$. 
Then $\bs \de( u(s_n); D(y_n, \rho_n)) = \eps$. Define $\lam_{\max}(s_n)$ as in the statement of the result. By Lemma~\ref{lem:collision-duration} it follows that $s_n +c_0 \lam_{\max}(s_n)^2 \le \tau_n$ for all sufficiently large $n$. The remaining claims hold by the choice of $s_n$.  
\end{proof} 

\subsection{Proof of Theorem~\ref{thm:main}}

\begin{proof}[Proof of Theorem~\ref{thm:main}] 
Assume the theorem is false. Let $K \ge 1$ and  fix collision intervals $[\sigma_n, \tau_n]\in \calC_K( y_n, \rho_n, \eps_n, \eta)$ as in Definition~\ref{def:K} and Lemma~\ref{lem:K}. 
We assume that $\eta>0$ is sufficiently small as in Lemma~\ref{lem:collision-duration} and let $\eps>0$  and $s_n$ be given by Corollary~\ref{cor:sn}, so we have
\EQ{
\bs \de( u(s_n), D( y_n, \rho_n)) = \eps.
}
Let $M_n$ be a sequence of non-negative integers, $\calQ( \bs \om_n)$ a sequence of $M_n$-bubble configurations, and $\vec \nu_n \in (0, \infty)^{M_n+1}$, $\vec \xi_n \in (0, \infty)^{M_n+1}$  sequences  so that 
\EQ{ \label{eq:d-sn} 
\eps \le \bfd( u(s_n), \calQ(\bs \om_n); D(y_n, \rho_n); \vec \nu_n, \vec \xi_n) \le 2 \eps. 
}
We fix a choice of $\xi_n, \nu_n$ (the first components of the vectors $\vec \xi_n, \vec \nu_n$) as in Remark~\ref{rem:neck} so that~\eqref{eq:ci-xi} and ~\eqref{eq:ci-neck} hold. 
Defining $\lam_{\max, n} = \lam_{\max}(s_n)$ as in Corollary~\ref{cor:sn} we have that $[s_n, s_n + c_0 \lam_{\max, n}^2] \subset [\sigma_n, \tau_n]$ and moroever that 
\EQ{ \label{eq:delta-big} 
\bs \de( u(t); D(y_n, \rho_n)) \ge \eps ,\quad \forall t \in [s_n, s_n + c_0\lam_{\max, n}^2], 
}
for all $n$ sufficiently large. Since $\sup_{t < T_+}E(u(t)) < \infty$ we can, after passing to a subsequence, assume $M_n = M$  for some fixed integer $M$ and that the constant $\om_{n}  \in\Sp^2$ in the $M$-bubble configuration $\calQ(\bs \om_n)$ are fixed, i.e., $\om_n = \om \in\Sp^2$. 

We claim there exists $c_1>0$ such that for all $n \ge n_0$ 
\EQ{ \label{eq:lower1} 
\lambda_{\max, n}^2  \| \calT(u(t)) \|_{L^2}^2 \ge c_1, \quad \forall t \in [s_n, s_n + c_0 \lambda_{\max, n}^2]. 
}
 If not, we could find a sequence $t_n \in [s_n, s_n + c_0 \lambda_{\max,n }^2] \subset [ \sigma_n, \tau_n]$ such that
 \EQ{
 \lim_{n \to \infty} \lambda_{\max, n}  \| \calT(u(t_n)) \|_{L^2}  =  0 . 
 }
By the Compactness Lemma~\ref{lem:compact}, for all $x_n \in \R^2$  there exists a subsequence of the $u(t_n)$ and a sequence $R_n(x_n) \to \infty$, such that, 
for any sequence $1 \ll \breve{R}_n \ll  R_n(x_n)$, 
\EQ{ \label{eq:compact-lambda_max} 
\lim_{n \to \infty} \bs \de( u(t_n); D(x_n,  \breve{R}_n \lambda_{\max, n})) = 0. 
}
By Lemma~\ref{lem:Struwe} we also have that 
\EQ{ \label{eq:total_energy} 
\lim_{n \to \infty} E( u(t_n); D(y_n, \rho_n)) = 4 K \pi.  
} 
where here we have used that $|[\sigma_n, \tau_n]| \ll \rho_n^2$ to propagate Property (4) from Definition~\ref{def:K} from time $\sigma_n$ to time $t_n$. 
Note also that $\rho_n^2 \gg  \xi_n^2 \gg  \tau_n - \sigma_n$ and Corollary~\ref{cor:sn} ensure that $\xi_n \gg \lam_{\max, n}$. 

We claim that after passing to a subsequence, there exists an integer $L>0$, sequences $x_{\ell, n}$ for each $\ell \in \{1, \dots, L\}$, a number $R \ge 2$, and a sequence $1 \ll \ti R_n \ll \lam_{\max, n}^{-1} \xi_n$ so that  
\EQ{ \label{eq:R-out} 
E\Big(u(s_n); D( y_n, \rho_n) \setminus \bigcup_{\ell =1}^L D( x_{\ell, n}, R \lambda_{\max, n}\Big) \le \frac{\pi}{2}, 
}
and 
\EQ{ \label{eq:disjoint} 
D( x_{\ell, n}, \ti R_n \lambda_{\max, n}) \cap D( x_{\ell', n}, \ti R_n \lambda_{\max, n}) = \emptyset
}
for any $\ell \neq \ell'$. 
We find the points $x_{\ell, n}$ as follows. Passing to a subsequence we can assume the existence of the limits
\EQ{
\lim_{n \to \infty} \frac{|a(\om_{j, n}) - a(\om_{k, n})|}{\lam_{\max, n}} \in [0, \infty]. 
}
for each $j \neq k$. We define the index sets
\EQ{
\calL(j):= \Big\{ j \, \, \textrm{and any index} \, \, k \in \{1, \dots, M\}\, \, \textrm{such that} \lim_{n \to \infty} \frac{|a(\om_{j, n}) - a(\om_{k, n})|}{\lam_{\max, n}} < \infty \Big\}. 
}
and note that for any distinct indices $j, j'$ either $\calL(j) = \calL(j')$ or they are disjoint. For each $n$ and for each of the sets $\calL(j)$ we let $x_{\calL(j), n}$ denote the barycenter of the points $a(\om_{j_1, n}), \dots, a(\om_{j_{\#\calL(j)}, n})$ where each $j_k \in \calL(j)$. There are $L \le M$ many distinct index sets $\calL(j)$ and we let $\{x_{\ell, n}\}_{\ell =1}^L$ be an enumeration of the distinct $x_{\calL(j), n}$.  

Next, from~\eqref{eq:d-sn}, Lemma~\ref{lem:decay} and the definitions of $\bfd$ and $\lambda_{\max, n}$ we can find  $R_1\ge 2$ so that
\EQ{
E\Big( u(s_n); D( y_n, \rho_n) \setminus \bigcup_{j =1}^M D( a( \om_{j, n}),  R_1 \lam_{\max, n})\Big) \le \frac{\pi}{2}.
}
for all sufficiently large $n$. From the above and the definition of the $x_{\ell, n}$ we can find $R\ge R_1$ so that~\eqref{eq:R-out} holds. 
The existence of a sequence $1 \ll \ti R_n \ll \lam_{\max, n}^{-1} \xi_n$ so that~\eqref{eq:disjoint} holds   follows from definition of the $x_{\ell, n}$. 

Consider each of the sequences $x_{\ell, n}$ as the $x_n$ in~\eqref{eq:compact-lambda_max} and find corresponding sequences $R_{\ell, n}$ so that for any sequence $\breve R_n \le R_{\ell, n}$, 
\EQ{\label{eq:delta-xn} 
\lim_{n \to \infty} \bs \de( u(t_n); D( x_{\ell, n}, \breve{R}_n \lambda_{\max, n})) = 0, \quad \ell = 1, \dots, L.
}

Enlarge the sequence $\xi_n$ to a sequence $\ti \xi_n$ as in Remark~\eqref{rem:neck}, i.e. so that $\xi_n \ll \ti \xi_n \ll \rho_n$. Then, since all of the $x_{\ell,n} \in D( y_n, \xi_n)$ and $\lam_{\max, n} \ll \xi_n$, we have
\EQ{ \label{eq:wayin} 
\lim_{n \to \infty} \frac{ \lam_{\max, n}}{\dist( x_{\ell, n}, \partial D(y_n,  \ti \xi_n))}  = 0, 
}
for each $\ell$. We can thus find a sequence $R_n \le \min\{ \ti R_n, R_{\ell, n}\}_{\ell = 1, \dots, L}$ such that $D(x_{\ell, n}, R_n \lambda_{\max, n}) \subset D(y_n, \ti \xi_n)$ for each $\ell$.

Enlarging the excised discs (replacing $R$ by $R_n$) in~\eqref{eq:R-out} we obtain
\EQ{
E\Big(u(s_n); D( y_n, \rho_n) \setminus \bigcup_{\ell =1}^L D( x_{\ell, n}, R_n \lambda_{\max, n}\Big) \le \frac{\pi}{2}. 
}
We use Lemma~\ref{lem:Struwe} to propagate this bound forward to time $t_n$, giving 
\EQ{ \label{eq:lepi} 
E\Big(u(t_n); D( y_n, \rho_n) \setminus \bigcup_{\ell =1}^L D( x_{\ell, n}, R_n \lambda_{\max, n}\Big) \le \pi. 
}
On the other hand, by~\eqref{eq:delta-xn} (replacing $R_{1, n}$ by $R_n$), we can find integers $K_{\ell}$ so that, 
\EQ{
E( u(t_n); D(x_{\ell, n}, R_{ n} \lambda_{\max, n})) \to 4 K_{\ell} \pi \mas n \to \infty, 
}
for each $\ell \in \{1, \dots, L\}$. Combining the above with~\eqref{eq:total_energy} we see that, 
\EQ{
\lim_{n \to \infty} E\Big( u(t_n); D(y_n, \rho_n) \setminus \bigcup_{\ell =1}^L D(x_{\ell, n}, R_{ n} \lambda_{\max, n})\Big) = 4 K \pi - \sum_{\ell=1}^L 4 K_{\ell} \pi. 
}
Comparing the above with~\eqref{eq:lepi} it follows that $\sum_{\ell} 4 K_{\ell} \pi = 4 K \pi$ and thus, 
\EQ{ \label{eq:no-en-out} 
\lim_{n \to \infty} E\Big( u(t_n); D(y_n, \rho_n) \setminus \bigcup_{\ell =1}^L D(x_{\ell, n}, R_{ n} \lambda_{\max, n})\Big) = 0. 
}
From~\eqref{eq:delta-xn} and the definition of $R_n$ we have 
\EQ{ \label{eq:del-xn} 
\lim_{n \to \infty} \sum_{\ell =1}^L  \bs \de( u(t_n); D( x_{\ell, n}, R_n \lambda_{\max, n})) = 0. 
}
and moreover that the discs $D( x_{\ell, n}, R_n \lambda_{\max, n})$ are disjoint by~\eqref{eq:disjoint} and the choice of $R_n \le \ti R_n$. Combining~\eqref{eq:del-xn},~\eqref{eq:wayin}, the disjointness of the discs $D( x_{\ell, n}, R_n \lambda_{\max, n})$, ~\eqref{eq:no-en-out}, and Remark~\ref{rem:neck}, we conclude that 
\EQ{
\lim_{n \to \infty} \bs \de( u(t_n); D( y_n, \rho_n)) = 0.
}
which contradicts~\eqref{eq:delta-big}, proving~\eqref{eq:lower1}.  

By~\eqref{eq:lower1} we have 
\EQ{\nonumber
\sum_n \int_{s_n}^{s_n + c_0 \lambda_{\max}(s_n)^2}  \| \calT(u(t)) \|_{L^2}^2 \, \ud t &\ge c_1  \sum_n \int_{s_n}^{s_n + c_0 \lambda_{\max}(s_n)^2} \lambda_{\max}(s_n)^{-2} \, \ud t  \ge c_0c_1 \sum_{n}1 = \infty.
}
On the other hand, since the intervals $[\sigma_n, \tau_n]$ are disjoint, the above contradicts the bound~\eqref{eq:tension-L2}, i.e., 
\EQ{
\sum_n \int_{s_n}^{s_n + c_0 \lambda_{\max}(s_n)^2}  \| \calT(u(t)) \|_{L^2}^2 \, \ud t \le \int_0^{T_+}  \| \calT(u(t) \|_{L^2}^2 \, \ud t < \infty, 
}
which completes the proof. 
\end{proof} 

\subsection{Proof of Theorem~\ref{thm:main1}}
In this section we prove Theorem~\ref{thm:main1}, using Theorem~\ref{thm:main} as a main ingredient in the proof.

\begin{proof}[Proof of Theorem~\ref{thm:main1}]  We consider the case of finite time blow up, i.e., $T_+ < \infty$, noting that the analysis for the global case is similar.

Let $L \ge 1$ and $\{x_\ell\}_{\ell =1}^L$ be the bubbling points given by the local theory of Struwe in Theorem~\ref{lem:lwp}. Let $\rho_0>0$ be sufficiently small so that $D(x_\ell; 2\rho_0) \cap D(x_m; 2\rho_0) = \emptyset$ for each $\ell \neq m$. By Theorem~\ref{lem:lwp} we have that
\EQ{
\lim_{ t\to T_+} E\Big( u(t) - u^*; \R^2 \setminus \bigcup_{\ell =1}^LD(x_\ell; \rho_0) \Big)  = 0. 
}
By Lemma~\ref{lem:ss-bu} we know that for each $\ell$
\EQ{
\lim_{ t\to T_+} E\big( u(t) - u^*; D( x_\ell; \rho_0) \setminus D( x_\ell; \sqrt{T_+- t}) \big) = 0,
}
and since $u^* \in \E$, 
\EQ{
\lim_{ t\to T_+} E\big( u^*;  D( x_\ell; \sqrt{T_+- t}) \big) = 0
}
for each $\ell \in \{1, \dots, L\}$. 
Hence it suffices to examine the solution $u(t)$ in the discs $D( x_\ell; \sqrt{T_+- t})$ for each $\ell \in \{1, \dots, L\}$. Fix an $\ell$ and, to ease notation, we write $y = x_\ell$ below. By Theorem~\ref{thm:main} we know that for $\rho(t):= \sqrt{T_+-t}$ we have, 
\EQ{
\lim_{t \to T_+} \bs \de( u(t); D( y, \rho(t)) ) = 0. 
}

Now, let $t_n \to T_+$ be any sequence of times. By the above we can find a sequence $1 \le M_n \le  (4 \pi)^{-1} E( u_0)$, a sequence of $M_n$-bubble configurations $\calQ(\bs \om_n)$, 
and sequences $\vec \nu_{n} = (\nu_n, \nu_{1, n}, \dots, \nu_{M_n, n})$, $\vec \xi_{n} = ( \xi_n, \xi_{1, n}, \dots, \xi_{M_n, n})$ such that 
\EQ{ \label{eq:u(t_n)} 
\lim_{n \to \infty} \bfd( u(t_n), \calQ( \bs \om_{ n}); D(y, \rho(t_n)); \vec \nu_n, \vec \xi_n)  = 0.
}
Passing to a subsequence of the $t_n$ we may assume that $M_n = M$ is a fixed integer and that the constants $\om_{ n}\in \Sp^2$ in the $M$-bubble configurations $\calQ(\bs \om_n)$ are fixed, i.e., $\om_n = \om \in \Sp^2$. This proves the estimate~\eqref{eq:ss-neck}. 

Since each of the $\om_{j, n}$ is a harmonic map (and thus $\calT(\om_{j,n}) = 0)$ these sequences satisfy the hypothesis of the Compactness Lemma~\ref{lem:compact}.  Therefore, after passing to a joint subsequence, for each $j \in \{1, \dots, M\}$ we can find integers $M_j \ge 0$,  harmonic maps $\te_{j, 0}, \te_{ j, 1}, \dots \te_{j, M_j}$ (where only $\te_{j, 0}$ is possibly constant),  along with sequences of vectors $b_{j, k, n} \in D(a(\om_{ j, n}), C_j \lam(\om_{ j, n}))$ and scales $\mu_{ j, k, n} \ll \lam(\om_{ j, n})$ satisfying~\eqref{eq:w22-body},~\eqref{eq:w22-bubbles} and so that, 
\EQ{ \label{eq:bubbles(t_n)} 
\lim_{n \to \infty} &\Bigg[ E \Big( \om_{j, n}- \te_{j, 0}\big( \frac{ \cdot - a( \om_{ j, })}{\lam(\om_{  j, n})}\big) - \sum_{k =1}^{M_j}\Big( \te_{j, k} \big( \frac{\cdot - b_{ j, k, n}}{\mu_{ j, k, n}} \big) - \te_{j, k}( \infty)\Big); D_{j,n} \Big) \\
& + \Big\| \om_{j, n}- \te_{j, 0}\big( \frac{ \cdot - a( \om_{ j, })}{\lam(\om_{  j, n})}\big) - \sum_{k =1}^{M_j}\Big( \te_{j, k} \big( \frac{\cdot - b_{ j, k, n}}{\mu_{ j, k, n}} \big) - \te_{j, k}( \infty)\Big)\Big\|_{L^\infty( D_{j,n})}  \Bigg] = 0, 
}
where $D_{j,n}:=D( a(\om_{j, n}), R_{n} \lam(\om_{ j, n}))$ 
for some sequence $R_n \to \infty$, and where for each  fixed~$j$, 
\EQ{ \label{eq:fixedj} 
\lim_{n \to \infty} \sum_{k \neq k'}  \Big( \frac{ \mu_{ j, k, n}}{\mu_{j, k', n}} + \frac{ \mu_{ j, k', n}}{\mu_{ j, k, n}} + \frac{| b_{ j, k, n} - b_{ j, k', n} |^{2}}{\mu_{ j, k, n} \mu_{j, k', n}} \Big)^{-1} = 0. 
}
To make the notation for the scales and centers of the harmonic maps above more uniform, we also introduce the notation 
\EQ{
\mu_{ j, 0, n}:= \lam(\om_{ j, n}), \quad b_{ j, 0, n} := a(\om_{j, n}).
}
Our goal is to find a collection of asymptotically orthogonal triples, $(\om_j, a_{j, n}, \lam_{j, n})$ as in the statement of Theorem~\ref{thm:main1}. The  sequencies  $\{(\te_{j, k}, b_{ j, k, n}, \mu_{ j, k, n})\}_{j =1, k =0}^{j = M, k = M_j}$ are not guaranteed to be such a 
collection. While~\eqref{eq:fixedj} holds for each fixed $j$, the triples $(\te_{j, k}, b_{j, k, n}, \mu_{ j, k, n})$ and $(\te_{j', k'}, b_{ j', k', n}, \mu_{ j', k', n})$ with $j \neq j'$ might not be asymptotically orthogonal. 
As in Definition~\ref{def:tree} and the proof of Lemma~\ref{lem:collision-duration}, we define the set of indices $\calR$ to be those associated to the roots, i.e., the maximal elements of the sequences $\frakh_j=\{\omega_{j,n}\}$ of harmonic maps under the partial order~$\preceq$.  For each root $\frakh_{j_0}$ we define the bubble tree 
\EQ{\nonumber
\calT(j_0) := \{\frakh_j\preceq \frakh_{j_0}\}.
} 
 Let  $C_0>0$ be large enough so that $\frakh_j\prec\frakh_{j_0}$ implies 
$
D(a(\om_{j, n}), \lam(\om_{ j, n})) \subset D(a(\om_{ j_0, n}),  C_0\lam(\om_{ j_0, n}))
$
for all~$n$. 
The collection of all harmonic maps, together  with scales and centers,  concentrating inside the discs $D(a(\om_{ j_0,n}),  C_0\lam(\om_{ j_0,n}))$ equals  
\EQ{  \label{eq:calJ0} 
\bigcup_{\frakh_j \in \calT(j_0)}\{ (\te_{j, k},  b_{ j, k, n},  \mu_{ j, k, n})\}_{k =0}^{M_{j}}
}
We let 
\EQ{
\calK(j, k):= \big\{ (j, k)& \, \, \textrm{and any} \, \, (j', k') \, \, \textrm{associated to a triple} \, \, (\om_{j', k'}, b_{j', k', n}, \mu_{ j', k', n}) \\
 &\textrm{not asymptotically orthogonal to} \, \, (\om_{j, k}, b_{ j, k, n}, \mu_{ j, k, n})\big\}
}
If $\#\calK(j, k) = 1$ we keep the triple $ (\te_{j, k},  b_{ j, k, n},  \mu_{ j, k, n})$ in our final collection -- note that~\eqref{eq:sup-blowup} and~\eqref{eq:sup-global} will be consequences of~\eqref{eq:u(t_n)}~\eqref{eq:bubbles(t_n)}, and~\eqref{eq:w22-body}~\eqref{eq:w22-bubbles}.  Now consider a set of indices $(j_1, k_1)$ with $\frakh_{j_1} \in\calT(j_0)$ and such that $\#\calK(j_1, k_1) \ge 2$. 
After performing a fixed (in $n$) rescaling and  translation of each harmonic map $\te_{j, k}$ associated to an index $(j, k) \in \calK(j_1, k_1)$ we may assume that, 
\EQ{
b_{ j, k, n} = b_{ j_1, k_1, n} \mand \mu_{ j, k, n}  = \mu_{j_1, k_1, n},\quad \forall (j, k) \in \calK(j_1, k_1)
}
and to simplify notation below we simply write $b_n = b_{ j_1, k_1, n}$ and $\mu_n = \mu_{ j_1, k_1, n}$. 
By~\eqref{eq:u(t_n)} and~\eqref{eq:bubbles(t_n)} we can also find $r_n \to \infty$ a number $C_1>0$, an integer $L_1 \ge 0$, and a finite number of sequences  of discs $D(c_{ \ell, n}, \rho_{n, \ell})  \subset D( b_n, C_1 \mu_n)$ for $\ell \in \{1, \dots, L_1\}$ and with 
\EQ{ \label{eq:small-holes} 
\frac{\rho_{ \ell, n}}{ \dist( c_{ \ell, n},  \partial D(b_{n},  C_1\mu_n))} \to 0 \mas n \to \infty
}
so that, 
\EQ{ \label{eq:small-necks} 
\lim_{n \to \infty}E( u(t_n); D( c_{\ell, n}, 2 \rho_{\ell, n}) \setminus  D( c_{\ell, n}, \frac{1}{2}  \rho_{\ell, n}) ) = 0,  
}
and, 
\EQ{\label{eq:no-neck1} 
\lim_{n \to \infty}E( u_n; D( b_n, 2r_n\mu_n) \setminus D( b_n, \frac{1}{2} r_n\mu_n))  = 0, 
}
and, 
\EQ{ \label{eq:en-u-b-disc} 
\!\!\!\!\!\!\lim_{n \to \infty} E\Big( u(t_n) - \!\!\!\!\!\!\sum_{(j, k) \in \calK(j_1, k_1)}\!\!\!\Big( \theta_{j, k}( \frac{\cdot - b_n}{ \mu_n}) - \theta_{j, k}( \infty) \Big) ; D( b_n, r_n\mu_n)  \setminus \bigcup_{\ell =1}^{L_1} D(c_{n, \ell}, \rho_{n, \ell})\Big) = 0. 
}
By Theorem~\ref{thm:main} and~\eqref{eq:no-neck1} we know that, 
\EQ{ \label{eq:delta-b-disc} 
\lim_{n \to \infty} \bs \de( u(t_n); D( b_n, r_n \mu_n))  = 0.
}
This means that, after passing to a subsequence, we can find an integer $M_{j_1, k_1}$, a sequence of $M_{j_1, k_1}$-bubble configurations $\calQ(\bs \Om_n)$, with the non-trivial harmonic maps denoted by $\Om_{m, n}$ for $m \in \{1, \dots, M_{j_1, k_1}\}$, 
 sequences $\vec{ \ti \nu}_n = (\ti \nu_n, \ti \nu_{1, n}, \dots, \ti \nu_{M_{j_1, k_1}, n})$ and  $\vec{ \ti \xi}_n = (\ti \xi_n,  \ti \xi_{1, n}, \dots, \ti \xi_{M_{j_1, k_1}, n})$, so that 
\EQ{\label{eq:d-b-disc} 
\bfd( u(t_n), \calQ(\Om_{\infty, n}, \bs \Om_{n}); D( b_n, r_n \mu_n); \vec{\ti \nu}_n, \vec{\ti \xi}_n)  \to 0 \mas n \to \infty. 
}
Consider the centers and scales $a(\Om_{m,n}), \lam(\Om_{m,n})$ associated to the harmonic maps  $\Om_{m, n}$. Using~\eqref{eq:small-holes},~\eqref{eq:small-necks}~\eqref{eq:no-neck1}, and~\eqref{eq:en-u-b-disc} we see that there are only two possible cases. 

\textbf{Case 1:} All of the harmonic maps $\Om_{n, m}$ concentrate within the discs $\bigcup_{ \ell = 1}^{L_1}D( c_{ \ell, n}, \rho_{ \ell, n})$, i.e.,  for each $m \in \{1, \dots, M_{j_1, k_1}\}$, we have,  $D( a( \Om_{ m, n}), \lam(\Om_{ m, n})) \subset D( c_{\ell, n}, \rho_{ \ell, n})$ for some $\ell \in \{1, \dots, L_1\}$. This means that, 
\EQ{
\lim_{n \to \infty} E\Big( u(t_n); D( b_n, r_n\mu_n)  \setminus \bigcup_{\ell =1}^{L_1} D(c_{\ell, n}, \rho_{ \ell, n})\Big) = 0. 
}
In fact, comparing the above with~\eqref{eq:en-u-b-disc} one can deduce that the harmonic maps $$\sum_{(j, k) \in \calK(j_1, k_1)} ( \theta_{j, k}(x) - \theta_{j, k}( \infty) ) = \textrm{constant} \in \R^3.$$ In this case we discard all of the harmonic maps with indices $(j, k) \in \calK(j_1, k_1)$ from the final collection. 

\textbf{Case 2:} Exactly one of the harmonic maps $\Om_{m_1, n}$ has scale $\lam( \Om_{m_1, n}) \simeq  \mu_n$ and center $|a(\Om_{ m_1, n}) - b_n| \lesssim  \mu_n$, and the rest concentrate within the discs $\bigcup_{ \ell = 1}^{L_1}D( c_{ \ell, n}, \rho_{\ell, n})$. We then have, 
\EQ{ \label{eq:Omeganm}
\lim_{n \to \infty} E\Big( u(t_n) - \Om_{ m_1, n}; D( b_n, r_n\mu_n)  \setminus \bigcup_{\ell =1}^{L_1} D(c_{ \ell, n}, \rho_{ \ell, n})\Big) = 0
}
and
\EQ{
\lim_{n \to \infty} \Big\| u(t_n) - \Om_{ m_1, n} \Big\|_{L^\infty( D( b_n, r_n\mu_n)  \setminus \bigcup_{\ell =1}^{L_1} D(c_{ \ell, n}, \rho_{ \ell, n}))} = 0.
}
By another application of the Compactness Lemma~\ref{lem:compact}, we can find a non-trivial harmonic map, which we label  $\Theta_{ j_1, k_1}$, a non-negative integer $P$, scales $\nu_{ p, n} \ll \mu_n$,  centers $d_{ p, n}$,  and non-trivial harmonic maps $\Theta_{p}$, satisfying~\eqref{eq:w22-body} and~\eqref{eq:w22-bubbles}, and  so that
\EQ{
\lim_{n \to \infty} &\Bigg[E\Big( \Om_{ m_1, n} -  \Theta_{ j_1, k_1}\big( \frac{\cdot - b_n}{\mu_n}\big) - \sum_{p=1}^{P} \big(\Theta_p\big(\frac{ \cdot - d_{ p, n}}{\nu_{ p, n}} \big) - \Theta_p(\infty)\big);  D( b_n, r_n\mu_n)\Big)  \\
& + \Big\| \Om_{ m_1, n} -  \Theta_{ j_1, k_1}\big( \frac{\cdot - b_n}{\mu_n}\big) - \sum_{p=1}^{P} \big(\Theta_p\big(\frac{ \cdot - d_{ p, n}}{\nu_{ p, n}} \big) - \Theta_p(\infty)\big)\Big\|_{L^\infty(  D( b_n, r_n\mu_n)) } \Bigg]= 0
}
We know that the harmonic map $\Theta_{j_1, k_1}$  must be nontrivial because of~\eqref{eq:Omeganm} together with
\eqref{eq:en-u-b-disc}, 
where the latter ensures that energy cannot concentrate within the region $$D(b_n, r_n \mu_n) \setminus  \bigcup_{\ell =1}^{L_1} D(c_{\ell, n}, \rho_{ \ell, n})$$
at scales smaller than  $\mu_n$.  Indeed,
by~\eqref{eq:en-u-b-disc} the scales and centers of the non-trivial harmonic maps $\Theta_p$ must all concentrate within the discs $\bigcup_{\ell =1}^{L_1}D(c_{ \ell, n}, \rho_{\ell, n})$ and we can conclude that
\EQ{
\lim_{n \to \infty} E\Big( u(t_n) -  \Theta_{ j_1, k_1}( \frac{\cdot - b_n}{\mu_n}); D( b_n, r_n\mu_n)  \setminus \bigcup_{\ell =1}^{L_1} D(c_{ \ell, n}, \rho_{ \ell, n})\Big) = 0,
}
and
\EQ{
\lim_{n \to \infty} \Big\| u(t_n) -  \Theta_{ j_1, k_1}( \frac{\cdot - b_n}{\mu_n})\Big\|_{L^\infty( D( b_n, r_n\mu_n)  \setminus \bigcup_{\ell =1}^{L_1} D(c_{ \ell, n}, \rho_{ \ell, n}))} = 0
}
In this case, we discard all the triples $(\te_{j, k}, b_{ j, k, n}, \mu_{ j, k, n})$ with indices $(j, k) \in \calK(j_1, k_1)$ from the final collection, and replace them with the triple  $(\Theta_{ j_1, k_1}, b_{j_1, k_1, n}, \mu_{ j_1, k_1, n})$. 

To summarize, we keep for the final decomposition any triples $(\theta_{j, k}, b_{ j, k, n}, \mu_{j, k, n})$ with $\frakh_j \in \calT(j_0)$ if $\#\calK(j, k) = 1$. If $\#\calK(j, k) >1$  we discard all of the triples $( \theta_{j', k'}, b_{ j', k', n}, \mu_{ j', k', n})$ with indices $j' \in \calK(j, k)$  and, in the event of Case $2$ above, replace them with $ \Theta_{j, k}, b_{ j, k, n}, \mu_{ j, k,n}$. We perform this analysis for each index $j_0 \in \calR$, resulting in a final collection of triples that are mutually asymptotically orthogonal  and satisfy
the conclusions of the theorem.
\end{proof}

\bibliographystyle{plain}
\bibliography{HMHF}

\newpage
\bigskip
\centerline{\scshape Jacek Jendrej}
\smallskip
{\footnotesize
 \centerline{CNRS and LAGA, Universit\'e  Sorbonne Paris Nord}
\centerline{99 av Jean-Baptiste Cl\'ement, 93430 Villetaneuse, France}
\centerline{\email{jendrej@math.univ-paris13.fr}}
} 
\medskip 
\centerline{\scshape Andrew Lawrie}
\smallskip
{\footnotesize
 \centerline{Department of Mathematics, Massachusetts Institute of Technology}
\centerline{77 Massachusetts Ave, 2-267, Cambridge, MA 02139, U.S.A.}
\centerline{\email{alawrie@mit.edu}}
} 
\medskip 
\centerline{\scshape Wilhelm Schlag}
\smallskip
{\footnotesize
 \centerline{Department of Mathematics, Yale University}
\centerline{10 Hillhouse Ave, New Haven, CT 06511, U.S.A.}
\centerline{\email{wilhelm.schlag@yale.edu}}
}


\end{document}